\newcommand{\bq}{\begin{equation}}
\newcommand{\eq}{\end{equation}}
\newcommand{\bqa}{\begin{eqnarray*}}
\newcommand{\eqa}{\end{eqnarray*}}
\theoremstyle{plain}
\newtheorem{theo}{Theorem}[section]
\newtheorem{theorembis}{Theorem}
\newtheorem{prop}[theo]{Proposition}
\newtheorem{lemm}[theo]{Lemma}
\theoremstyle{definition}
\crefname{lemm}{Lemma}{Lemmas}
\crefname{prop}{Proposition}{Propositions}
\theoremstyle{remark}
\newtheorem{rema}[theo]{Remark} 
\DeclareSymbolFont{pletters}{OT1}{cmr}{m}{sl}
\DeclareMathSymbol{s}{\mathalpha}{pletters}{`s}
\def\bema{\begin{displaymath}}
  \def\enma{\end{displaymath}}
  \def\bemar{\begin{displaymath}\begin{array}{rcl}}
  \def\enmar{\end{array}\end{displaymath}}
  \def\R{\mathbb{R}}
  \def\E{\mathbf{E}}
  \def\N{\mathbb{N}}
  \def\S{\mathbb{S}}
  \def\ep{\varepsilon}
  \def\rem{\left\{ \begin{array}{lcl}\mathcal{O}\left(n^{\frac{d-3}{2}} \log(n)\right) &\mbox{for} & d\geq 3, \\
\mathcal{O}(n^{-\frac{1}{4}})&\mbox{for} & d=2\end{array}  \right.}
  \newcommand\comb[2]{ \left( \begin{array}{c} #1 \\ #2 \end{array}   \right) }
  \def\eq#1{\mathrel{\mathop{\kern0pt\sim}\limits_{#1}}}
\title[Almost Sure Uniform Convergence of Random Hermite Series]{Almost Sure Uniform Convergence of Random Hermite Series}
\author{Rafik Imekraz}
\address{Laboratoire MIA, EA 3165, La Rochelle Université, 
Avenue A. Einstein, 17031 La Rochelle, France
}
\email[R. Imekraz]{rafik.imekraz@univ-lr.fr}
\author{Micka\"el Latocca}
\address{
Laboratoire de Mathématiques et de Modélisation d'\'Evry (LaMME), UMR CNRS 8071\\
Université d'\'Evry\\
23 boulevard François Mitterrand, 91000 Évry-Courcouronnes, France
}
\email[M. Latocca]{mickael.latocca@univ-evry.fr}
\def\l@subsection{\@tocline{2}{0pt}{2.5pc}{5pc}{}}
\def\@tocline#1#2#3#4#5#6#7{\relax
  \ifnum #1>\c@tocdepth 
  \else
    \par \addpenalty\@secpenalty\addvspace{#2}%
    \begingroup \hyphenpenalty\@M
    \@ifempty{#4}{%
      \@tempdima\csname r@tocindent\number#1\endcsname\relax
    }{%
      \@tempdima#4\relax
    }%
    \parindent\z@ \leftskip#3\relax \advance\leftskip\@tempdima\relax
    \rightskip\@pnumwidth plus4em \parfillskip-\@pnumwidth
    #5\leavevmode\hskip-\@tempdima
      \ifcase #1
       \or\or \hskip 1em \or \hskip 2em \else \hskip 3em \fi%
      #6\nobreak\relax
    \dotfill\hbox to\@pnumwidth{\@tocpagenum{#7}}\par
    \nobreak
    \endgroup
  \fi}
\begin{document}
\newcommand{\mickael}[1]{{\color{teal} \textbf{M:} #1}}
\newcommand{\rafik}[1]{{\color{orange} \textbf{R:} #1}}

\begin{abstract} We continue the analysis of random series 
associated to the multidimensional harmonic oscillator $-\Delta+|x|^2$ on $\R^d$ with $d\geq 2$.
More precisely we obtain a necessary and sufficient condition to get the almost sure uniform convergence on the whole space $\R^d$. It turns out that the same condition gives 
the almost sure uniform convergence on the sphere $\S^{d-1}$ (despite $\S^{d-1}$ is a zero Lebesgue measure of $\R^d$).
From a probabilistic point of view, our proof adapts a strategy used by the first author for boundaryless Riemannian compact manifolds.
However, our proof requires sharp off-diagonal estimates of the spectral function of $-\Delta + |x|^2$. Such estimates are obtained using elementary tools.
\end{abstract}

\maketitle

\tableofcontents

\section{Introduction}

\subsection{Description of the results}

The present work is the continuation of \cite{randomh, imek-crusep,imek-aif} on random linear combinations of eigenfunctions of the multidimensional harmonic oscillator $-\Delta+|x|^2$ acting on $\R^d$.
The results of \cite[Section 4]{imek-aif} applied to the harmonic oscillator are only sufficient conditions to reach $L^p$ spaces with probability $1$. In the present paper, we shall give necessary and sufficient conditions.
In some sense, \cref{teo} gives final results accessible via the strategy used in \cite{imek-jep} for boundaryless compact Riemannian manifolds. In the case of the harmonic oscillator, we develop new estimates of its spectral function (see Proposition \ref{en} and Section \ref{proof-sp} for details).

Before going on the specific case of the harmonic oscillator and detailing the type of conclusion we are interested in, it is worthwhile to recall the seminal results that lead us to our study (we refer to the introduction of \cite{imek-jep} for a global account on the subject).

The subject had been introduced by Paley and Zygmund in \cite{paley1930} and aims to study
properties of random trigonometric functions, in particular they obtained a sufficient condition on a sequence of coefficients $(c_n)_{n\geq 1}$ in order to get the almost sure uniform convergence of random trigonometric series like
\begin{equation}\label{ran}
\sum \pm c_n \cos(nx).
\end{equation}
The Paley-Zygmund theorem had finally been improved by Salem and Zygmund in \cite{salem1954} to the following condition 
\begin{equation}\label{salem54}
\sum_{\ell\geq 2} \frac{1}{\ell \sqrt{\log(\ell)}} \Big(\sum_{n\geq \ell} |c_n|^2 \Big)^{1/2}<+\infty,
\end{equation}
but Salem and Zygmund understood that \eqref{salem54} was not necessary and hence the story did not end there! Actually, the problem of giving a necessary and sufficient condition had finally be solved by Dudley and Fernique \cite{dudley1967,fernique74} and required further developments of the theory of Gaussian processes. The problem had then been highly studied by Marcus and Pisier in the seminal book \cite{pisier1981} in which random trigonometric series like \eqref{ran} are replaced with natural random trigonometric series on compact groups.

In the last twenty years, there was a great motivation to consider random series, specially Gaussian random series, in the context of studying partial differential equations (like Schrödinger or wave equations) in a subcritical regime. For our purpose, we have been motivated by the three following papers (which are more focused on the linear counterpart of such considerations) by Ayache-Tzvetkov \cite{tzvetkov-ay}, Tzvetkov \cite{Tzvetkov4} and Burq-Lebeau \cite{burq-lebeau}. The initial problem by Paley and Zygmund can now be reformulated and generalized as follows: if one interprets the trigonometric functions $x\mapsto \cos(nx)$ as eigenfunctions of the Laplace-Beltrami operator on the torus $\mathbb{T}$, then we may replace such functions with eigenfunctions of an elliptic operator on a manifold $\mathcal{M}$ and we ask to give a necessary and sufficient condition of almost sure uniform convergence on $\mathcal{M}$ (one can also consider other type of convergences). 

For the almost sure convergence in $L^p$ for finite $p$, one may say that the problem is almost solved at least for some important linear models (see \cite{imek-crusep}). The case $p=\infty$ turns out to be much more complicated (see \cite{imek-aif,imek-jep,imek-hold}).

Let us now give a rigorous definition of the random series we shall use for the harmonic oscillator 
\begin{equation*}
  -\Delta + |x|^2, \quad x\in \mathbb{R}^d, \quad d\geq 2. 
\end{equation*}
We denote by $(E_{n})_{n\in \N}$ the sequence of its eigenspaces, namely 
\begin{equation*}
E_{n}=\ker((-\Delta+|x|^2) - (2n+d)) \qquad \mbox{so that}\qquad L^2(\R^d)=\bigoplus\limits_{n\in \N} E_n,
\end{equation*}
in which we denote $\N=\{0,1,\dots,\}$. We shall also denote $\N^\star=\{1,2,\dots,\}$ in the sequel.
Let $B(d,n)$ be a Hilbert basis of the eigenspace $E_{n}$, which we do not explicit.
Let us therefore write 
\begin{equation}\label{Enbasis}
B(d,n)=\{\varphi_{n,k},\; 1\leq k\leq \dim(E_n)\}.
\end{equation}
We merely assume that each $\varphi_{n,k}$ is real-valued. The following asymptotics for the dimension of the eigenspaces as $n\to \infty$ are well-known:  
\begin{equation}\label{dimEn}
\mbox{Card}B(d,n)=\dim(E_{n})\simeq n^{d-1}.
\end{equation}

For any real number $s$, we denote by $\mathcal{H}^s(\R^d)$ the Sobolev space associated to the harmonic oscillator consisting of all tempered distributions of the form
\begin{equation*}
f=\sum_{n\in \N}f_n ,\quad \mbox{with }f_n \in E_n,\qquad \sum_{n\in \N} (1+n)^s \|f_n\|_{L^2(\R^d)}^2<\infty.
\end{equation*}
We associate a random series to $f$, which we write $f^{G,\omega}$ and study under which conditions it almost surely belongs in $L^p(\mathbb{R}^d)$.

In order to define the suitable random series, we shall follow \cite{imek-crusep,imek-aif,imek-jep} which are indeed motivated by the multidimensional considerations of \cite{burq-lebeau}. 
We define\footnote{It turns out that many other methods of randomization can be used and proved to be equivalent (see \cite[Parts 11 and 12]{imek-jep}), but we shall not focus on these probabilistic aspects.}:
\begin{equation}\label{deffg}
  f^{G,\omega}=\sum\limits_{n\geq 1}f_n^{G,\omega},\qquad  f_n^{G,\omega}(x)=\frac{\|f_n\|_{L^2(\R^d)}}{\sqrt{\dim(E_{n})}} \sum_{k=1}^{\dim(E_n)}g_{n,k}(\omega) \varphi_{n,k}(x),\qquad
\end{equation}
in which the $(g_{n,k})$, $n\geq 0$, $1 \leq k \leq \dim E_n$ are i.i.d. real Gaussian random variables with distribution $\mathcal{N}(0,1)$. 

We note that the factor $\frac{1}{\sqrt{\dim(E_n)}}$ ensures that the Gaussian vector 
\begin{equation*}\frac{1}{\sqrt{\dim(E_n)}}(g_{n,1},\dots,g_{n,\dim(E_n)})
\end{equation*} has norm $1$ in $L^2(\Omega,\R^d)$.
A much deeper reason is that such a Gaussian vector is known to be, in some sense, comparable to a random vector whose distribution is uniform over the sphere $\S^{d-1}$ (see \cite[page 755, Proposition 23 and Remark 24]{imek-jep} and \cite[Page 58]{pis89}).

As it will be explained in Appendix \ref{anxA}, similar computations to the ones carried out in 
\cite[Theor\`eme 1.4] {imek-crusep} may prove the following result.
\newpage
\begin{theo}\label{thm.main-harmonic}
Let us fix $p\in [1,+\infty)$ and $f\in \bigcup\limits_{s\in \R}\mathcal{H}^s(\R^d)$. The following statements are equivalent:
\begin{enumerate}[(i)]
\item the condition $\sum\limits_{\ell\geq  1} \ell^{\frac{d}{2}-1} \Big(\sum\limits_{n\geq \ell}\frac{\|f_n\|_{L^2(\R^d)}^{2}}{n^{d/2}}\Big)^{p/2}<+\infty$ is satisfied, 
\item the Gaussian random series $\displaystyle\sum\limits_{n\in \N^\star} f_n^{G,\omega}$ almost surely converges in $L^p(\R^d)$.
\end{enumerate}
\end{theo}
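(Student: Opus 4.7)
My plan is to reduce the almost sure $L^p$ convergence of the random series $\sum_n f_n^{G,\omega}$ to a deterministic integrability condition and then to evaluate this condition by splitting $\R^d$ into dyadic shells adapted to the classically allowed region of the harmonic oscillator.

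First, by the It\^o--Nisio theorem applied to the separable Banach space $L^p(\R^d)$ together with Fernique's integrability theorem for Gaussian vectors, (ii) is equivalent to the finiteness of $\mathbf{E}\|f^{G,\omega}\|_{L^p(\R^d)}^p$. Fubini and the Gaussian Khintchine--Kahane equivalence, applied to the $\omega$-variable for each fixed $x\in\R^d$, then give
\begin{equation*}
\mathbf{E}|f^{G,\omega}(x)|^p \sim F(x)^{p/2}, \qquad F(x):=\sum_{n\geq 1} \frac{\|f_n\|_{L^2(\R^d)}^2}{\dim(E_n)}\, e_n(x,x),
\end{equation*}
where $e_n(x,x):=\sum_{k=1}^{\dim(E_n)}\varphi_{n,k}(x)^2$ is the (basis-invariant) diagonal of the spectral projector onto $E_n$. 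Thus (ii) is equivalent to $\int_{\R^d} F(x)^{p/2}\,dx<+\infty$, where the implicit constants depend only on $p$ and $d$.

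Second, I would plug in the sharp spectral function estimates for $-\Delta+|x|^2$ which constitute the technical heart of the paper (\cref{en}): up to subexponentially decaying terms,
\begin{equation*}
\frac{e_n(x,x)}{\dim(E_n)}\simeq n^{-d/2}\quad \text{when } |x|^2\lesssim n,
\end{equation*}
while both quantities are negligible when $|x|^2\gg n$. Partitioning $\R^d$ into shells $A_\ell:=\{\sqrt{\ell}\leq |x|<\sqrt{\ell+1}\}$ of Lebesgue measure $\simeq \ell^{d/2-1}$, only the indices $n\geq \ell$ contribute substantially to $F(x)$ for $x\in A_\ell$, so there $F(x)\sim \sum_{n\geq \ell} \|f_n\|_{L^2(\R^d)}^2/n^{d/2}$. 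Integrating $F^{p/2}$ over $A_\ell$ and summing over $\ell\in\N^\star$ reproduces exactly the series in (i).

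The main obstacle is the spectral-function input: one must establish a two-sided pointwise asymptotic for $e_n(x,x)$, uniform in $n$ and $x$ and sharp both on the classical region and through the transition zone $|x|\simeq \sqrt{2n+d}$, so that the upper and lower bounds on $F$ match on each shell $A_\ell$. This precise estimate is the subject of Section \ref{proof-sp}; relying on earlier, merely one-sided bounds would yield only the implication (i)$\Rightarrow$(ii), but not the converse which is the novel content of the statement.
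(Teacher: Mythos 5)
Your overall strategy is the paper's: Appendix \ref{anxA} reduces (ii), via Kahane--Khintchine, Khintchine and Fubini (Proposition \ref{proA1}), to the finiteness of $\int_{\R^d}\big(\sum_{n\geq 1} \|f_n\|_{L^2(\R^d)}^2\, e_{d,n}(x,x)/\dim(E_n)\big)^{p/2}\,\mathrm{d}x$, and then converts this to condition (i) using diagonal estimates of the spectral function on all of $\R^d$ together with essentially the shell computation you sketch (carried out in \cite[pages 340--341]{imek-crusep}). Your first step, phrased through It\^o--Nisio and Fernique, is a slightly looser but acceptable variant of Proposition \ref{proA1}.

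The genuine issue is where you source the spectral input. You invoke Proposition \ref{en} (and Section \ref{proof-sp}) for the two-sided bound $e_{d,n}(x,x)/\dim(E_n)\simeq n^{-d/2}$ when $|x|^2\lesssim n$ and for decay when $|x|^2\gg n$; but Proposition \ref{en} is stated only for $(x,y)\in\S^{d-1}\times\S^{d-1}$, and Section \ref{proof-sp} contains only graphical illustrations, so neither gives any information for general $x\in\R^d$, which is exactly what your shells $A_\ell$ require. The paper is explicit on this point in Appendix \ref{anxA}: the global-in-$x$ information on $e_{d,n}(x,x)$ is imported from \cite[Proposition 4.1]{imek-crusep} (so your remark that earlier bounds are ``merely one-sided'' is not accurate either). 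Relatedly, asking for asymptotics that are sharp ``through the transition zone $|x|\simeq\sqrt{2n+d}$'' is both false as stated (the equivalence $e_{d,n}(x,x)\simeq n^{\frac{d}{2}-1}$ degenerates in the Airy region near the turning point) and unnecessary: for $x\in A_\ell$ and $n\geq \ell$ one has $|x|^2\leq \ell+1$, which stays well inside the bulk where the two-sided bound holds uniformly, while for the upper bound the uniform estimate $e_{d,n}(x,x)\lesssim n^{\frac{d}{2}-1}$ near the turning point plus exponential smallness in the forbidden region suffice, at the harmless cost of replacing the tail $\sum_{n\geq \ell}$ by $\sum_{n\geq \ell/2}$, which is absorbed by reindexing. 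With the correct source for these global estimates (or a Mehler-type proof of them), your argument closes and coincides with the paper's.
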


The endpoint $p=\infty$ requires other ideas.
Similarly to Theorem \ref{thm.main-harmonic}, the natural remaining question is whether one can find a necessary and sufficient condition on a tempered distribution $f$ that ensures that the Gaussian random series $\sum f_n^{G,\omega}$ almost surely converges in $L^\infty(\R^d)$. 

Our main contribution completes \cite[Theorem 4.3 with $\alpha=1$]{imek-aif} and \cite[Theorem 2.6]{randomh} in an optimal way and reads as follows: 

\begin{theorembis}\label{teo}
Let $f\in \bigcup\limits_{s\in \R} \mathcal{H}^s(\R^d)$ be a tempered distribution. Then the following statements are equivalent:
\begin{enumerate}[(i)]
\item the following condition is satisfied 
\begin{equation}\label{sz}
\sum\limits_{\ell\geq 2} \frac{1}{\ell\sqrt{\log(\ell)}} \Big(\sum\limits_{n=\ell}^{+\infty} \frac{\|f_n\|_{L^2(\R^d)}^2}{n^{d/2}}\Big)^{1/2}<+\infty;
\end{equation}
\item the Gaussian random series $\displaystyle\sum\limits_{n\in \N} f_n^{G,\omega}$ almost surely converges in $L^\infty(\R^d)$;
\item the Gaussian random series $\displaystyle\sum\limits_{n\in \N} f_n^{G,\omega}$ almost surely converges in $L^\infty(\S^{d-1})$.
\end{enumerate}
\end{theorembis}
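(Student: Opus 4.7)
By the It\^o--Nisio theorem, almost sure convergence of $\sum f_n^{G,\omega}$ in $L^\infty(X)$ (with $X = \R^d$ or $\S^{d-1}$) is equivalent to almost sure boundedness of the Gaussian field $x \mapsto f^{G,\omega}(x)$ on $X$, whose covariance pseudo-metric takes the form
\begin{equation*}
d(x,y)^2 = \sum_{n \geq 1} \frac{\|f_n\|_{L^2(\R^d)}^2}{\dim E_n} \bigl(e_n(x,x) + e_n(y,y) - 2 e_n(x,y)\bigr), \qquad e_n(x,y) = \sum_k \varphi_{n,k}(x)\varphi_{n,k}(y),
\end{equation*}
where $e_n$ is the kernel of the spectral projector onto $E_n$. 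Since (ii)$\Rightarrow$(iii) is trivial by restriction to $\S^{d-1}\subset \R^d$, it suffices to establish the two implications (i)$\Rightarrow$(ii) and (iii)$\Rightarrow$(i), closing the cycle.

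\textbf{Upper bound (i)$\Rightarrow$(ii).} I would follow the dyadic Salem--Zygmund strategy of \cite{imek-jep}. Proposition \ref{en} provides the sharp off-diagonal estimate
\begin{equation*}
\frac{1}{\dim E_n}\bigl(e_n(x,x) + e_n(y,y) - 2 e_n(x,y)\bigr) \lesssim \frac{1}{n^{d/2}} \min\bigl(1,\, n|x-y|^2\bigr),
\end{equation*}
uniformly for $x,y$ in the classically allowed ball $\{|x|\leq C\sqrt{n}\}$. Decomposing $f^{G,\omega} = \sum_{N} B_N^\omega$ into dyadic blocks $B_N^\omega = \sum_{n \in [2^N, 2^{N+1})} f_n^{G,\omega}$, a Dudley entropy estimate applied to each block yields
\begin{equation*}
\mathbb{E}\bigl\|B_N^\omega\bigr\|_{L^\infty(\R^d)} \lesssim \sqrt{N}\, \biggl(\sum_{n \in [2^N, 2^{N+1})} \frac{\|f_n\|_{L^2(\R^d)}^2}{n^{d/2}}\biggr)^{1/2}.
\end{equation*}
Summing over $N$ and rewriting the outcome by Abel summation produces exactly the Salem--Zygmund series \eqref{sz}, which is finite by hypothesis; the Gaussian concentration inequality then upgrades this to almost sure convergence. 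The passage from the allowed ball $\{|x|\leq C\sqrt n\}$ to the whole $\R^d$ relies on the Agmon-type exponential decay of Hermite functions, which makes the contribution of distant points probabilistically negligible.

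\textbf{Lower bound (iii)$\Rightarrow$(i) and main difficulty.} Here I would apply Sudakov minoration to the Gaussian field restricted to $(\S^{d-1}, d)$. Using the lower-bound counterpart of Proposition \ref{en}, at each dyadic scale $2^N$ one exhibits $\gtrsim 2^{N(d-1)/2}$ pairwise $d$-separated points on $\S^{d-1}$ with mutual pseudo-distance at least comparable to $\bigl(\sum_{n \geq 2^N} \|f_n\|_{L^2(\R^d)}^2/n^{d/2}\bigr)^{1/2}$; Sudakov's inequality then forces \eqref{sz}. The main technical obstacle throughout is establishing the sharp \emph{matching} two-sided off-diagonal bounds on $e_n$ recorded in Proposition \ref{en}: without matching constants the same threshold \eqref{sz} could not control both directions. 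This is also what makes the a priori surprising equivalence (ii)$\Leftrightarrow$(iii) natural: the sphere sits inside the classically allowed ball $\{|x|\leq \sqrt{2n+d}\}$ for every $n\geq 1$ and locally mimics the bulk behavior of $e_n$, so restricting the Gaussian process to $\S^{d-1}$ preserves its metric-entropy structure up to universal constants.
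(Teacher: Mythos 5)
Both main implications in your sketch have genuine gaps. For (i)$\Rightarrow$(ii), the block estimate $\E\|B_N^\omega\|_{L^\infty}\lesssim \sqrt{N}\,\sigma_N$ with simple dyadic blocks $[2^N,2^{N+1})$, $\sigma_N^2=\sum_{2^N\le n<2^{N+1}}\|f_n\|^2 n^{-d/2}$, does not sum under \eqref{sz}: no Abel summation can give $\sum_N \sqrt{N}\,\sigma_N\lesssim \sum_N N^{-1/2}\bigl(\sum_{n\ge 2^N}\|f_n\|^2n^{-d/2}\bigr)^{1/2}$. Take $\sigma_N^2=N^{-3}$, so the tails are $\simeq N^{-2}$: condition \eqref{sz} holds while $\sum_N\sqrt N\,\sigma_N=\infty$. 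This is precisely why Salem--Zygmund (and the paper, in Section \ref{sec.sufficient}) use doubly exponential blocks $[2^{2^\ell},2^{2^{\ell+1}})$, for which the mesh/entropy factor is $2^{\ell/2}$ and the resulting series is, by a double Cauchy condensation, \emph{equivalent} to \eqref{sz} (see \eqref{sz2}). Also, you invoke Proposition \ref{en} ``uniformly on the classically allowed ball'', but the paper only proves (and only needs, for the sphere) the two-sided estimates for $x,y\in\S^{d-1}$ with $|x-y|\le 1$; for the sufficiency direction the paper uses instead the global bound $\sum_k\varphi_{n,k}(x)^2\lesssim n^{d/2-1}$ on $\R^d$ together with the finite subset concentration property \eqref{fini}, and no off-diagonal information at all.

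For (iii)$\Rightarrow$(i), Sudakov minoration is not strong enough. Boundedness of the restricted process plus Sudakov at scale $2^N$ only yields $\sup_N \sqrt{N\,T_{2^N}}<\infty$ with $T_p=\sum_{n\ge p}\|f_n\|^2n^{-d/2}$, whereas \eqref{sz} demands $\sum_N N^{-1/2}\sqrt{T_{2^N}}<\infty$; e.g.\ $T_{2^N}=1/(N\log^2 N)$ passes every Sudakov bound but violates \eqref{sz}. Necessity of the entropy condition holds only for \emph{stationary} Gaussian processes (Dudley--Fernique), and the process on $\S^{d-1}$ is not stationary. The paper's route is to use the two-sided estimate of Proposition \ref{del-e} on an arc $\Gamma$ to build an auxiliary stationary process $F^{G,\omega}$ on $\S^1$ with comparable Dudley pseudo-distance (Proposition \ref{cano}), transfer boundedness by a Slepian-type comparison, invoke Fernique's necessity theorem to get the entropy integral \eqref{ento}, and then convert that integral into the Salem--Zygmund series via Proposition \ref{ups-the} (also using Lemma \ref{lem.as-cv-Sd} to first secure $\sum\|f_n\|^2n^{-d/2}<\infty$). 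Without this stationarization-and-comparison step, your argument cannot reach \eqref{sz}.
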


\begin{rema}
It is interesting to note that the previous result holds true despite $\S^{d-1}$ is a $0$-Lebesgue measure subset of $\R^{d}$. A similar phenomenon has been observed in \cite[Theorem 2]{imek-jep} for suitable Gaussian random series of eigenfunction of the Laplace-Beltrami operator on a boundaryless compact manifold. It can be seen from the proof of \cref{teo} that one can replace the uniform convergence on $\S^{d-1}$ with the uniform convergence on the geodesic $\S^1 \times \{0\}^{d-2}$.
\end{rema}
\begin{rema} More interestingly, Theorem \ref{teo} remains true by replacing $L^\infty(\S^d)$ by $L^\infty(\mathcal{R} \S^d)$ whatever the positive radius $\mathcal{R}$ is (this is also obtained by following the proof). In other words, if one fixes two different positive radii $\mathcal{R}_1 \neq \mathcal{R}_2$ then Theorem \ref{teo} shows that the two following statements are equivalent:
\begin{enumerate}
\item[(iv) ] the Gaussian random series $\displaystyle\sum\limits_{n\in \N} f_n^{G,\omega}$ almost surely converges in $L^\infty(\mathcal{R}_1\S^{d-1})$,
\item[(v) ] the Gaussian random series $\displaystyle\sum\limits_{n\in \N} f_n^{G,\omega}$ almost surely converges in $L^\infty(\mathcal{R}_2\S^{d-1})$.
\end{enumerate}
Such an equivalence is remarkable since the two spheres $\mathcal{R}_1 \S^{d-1}$ and $\mathcal{R}_2 \S^{d-2}$ are disjoint.
\end{rema}

\subsection{Sketch of the proof of \cref{teo}}

Let us explain the strategy of proof of \cref{teo}, and point out the main difficulties.  

To start with, the implication (\textit{i}) $\Longrightarrow$ (\textit{ii}) of \cref{teo} does not pose any real issue. It is a matter of generalizing a result by Salem and Zygmund \cite[Page 291, Theorem 5.1.5]{salem1954} in a modern framework. This is done by exploiting some estimates of \cite[Pages 2745-2750]{imek-aif}. It is also clear that the implication 
(\textit{ii}) $\Longrightarrow$ (\textit{iii}) is obvious.

Almost all the paper is devoted to prove the implication (\textit{iii}) $\Longrightarrow$ (\textit{i}). Let us recall a few ideas on the study of Gaussian processes (we refer to the seminal book \cite{ledoux} or the introduction \cite{imek-jep} for more details):
\begin{itemize}
\item Restricting Gaussian processes on compact subsets is often a good idea, as for the almost sure continuity, a Gaussian process is almost surely continuous on $\R^d$ if and only if it is almost surely continuous on each closed ball centered at $0$ (say of radius being integers). This is actually obvious since an intersection of countable events of probability $1$ has also probability $1$.
\item If a Gaussian process is defined on a compact subset having a natural group structure, e.g. a torus, and if the considered Gaussian process is stationary, then a necessary and sufficient condition for almost sure continuity is the so-called entropic condition (we refer for instance to \cite[Chapter 12]{ledoux}). 
\item Without any assumption of stationarity, one must find another approach. A general approach initiated by Fernique and highly developed by Talagrand is the theory of majorizing measures (see \cite[Chapter 11]{ledoux})). As explained in the introduction of \cite[page 755]{imek-jep}, one trick may sometimes be used, namely using comparison theorems of Gaussian processes (Slepian type results) to reduce the analysis to the stationary case. It turns out that such a strategy can be more amenable than the use of majorizing measures.
\item In any of the above strategies, it has been understood since the paper
\cite{dudley1967} that understanding the behavior of a Gaussian process $f^{G,\omega}$ is more or less equivalent to study its so-called Dudley pseudo-distance which we now introduce.
\end{itemize}

Let us now define the Dudley pseudo-distance denoted below by $\delta$ of the Gaussian process $f^{G,\omega}$. For any $(x,y)\in \R^d\times \R^d$, we set 
\begin{eqnarray}\nonumber
\delta(x,y)^2 & :=& \E[|f^{G,\omega}(x)-f^{G,\omega}(y)|^2] \\\nonumber
& =& \sum_{n\geq 0} \E[f_n^{G,\omega}(x)-f_n^{G,\omega}(y)|^2] \qquad \mbox{by independence}\\\label{dedel}
& =& \sum_{n\in \N} \|f_n\|_{L^2(\R^d)}^2 \delta_n(x,y)^2, 
\end{eqnarray}
where $\delta_n(x,y)$ is a pseudo-distance associated to the eigenspace $E_n$ and is computed as follows: 
\begin{equation}\label{deln}
\delta_n(x,y)^2=\frac{1}{\dim(E_n)}\sum_{k=1}^{\dim(E_n)} (\varphi_{n,k}(x)-\varphi_{n,k}(y))^2.
\end{equation}

In contrast to the analysis of \cite[Theorems 8 and 10]{imek-jep}, it does not seem possible to hope for two-sided estimates of $\delta_n(x,y)$ in $\R^d\times \R^d$. This is due to the fact that the eigenfunctions $\varphi_{n,k}$ of the harmonic oscillator $-\Delta+|x|^2$ are known to decay exponentially at infinity. This is why we may separate distinguish between the allowed region $\max\{|x|,|y|\}\ll \sqrt{n}$ and the forbidden region $\min\{|x|,|y|\}\gg \sqrt{n}$. We will indeed only work in the first of these regions. 

Before writing our asymptotic estimate of \eqref{deln}, we prefer first explain the intuition that leads us to the correct conjecture of this asymptotic. 
First, the Bernstein inequality for the operator $-\Delta+|x|^2$ (see for instance \cite[Theorem 5.3]{imek-aif}) shows the following inequalities for any $(x,y)\in \R^d\times \R^d$  
\begin{equation}\label{berns}
|\varphi_{n,k}(x)-\varphi_{n,k}(y)|\leq C\sqrt{n} \|\varphi_{n,k}\|_{L^\infty(\R^d)} |x-y|.
\end{equation}
Then, it is important to realize that the best available bound for $\|\varphi_{n,k}\|_{L^\infty(\R^d)}$ (see for instance \cite{KoTa05}) reads 
\begin{equation}
\label{bound-generic}  
  \|\varphi_{n,k}\|_{L^\infty(\R^d)} \lesssim n^{\frac{1}{2}\left(\frac{d}{2}-1 \right)},
\end{equation}
and is not good enough for our purposes. This stems from the fact that while \eqref{bound-generic} is a valid for any eigenfuctions, the quantity in which we are interested in \eqref{deln} does not depend on the choice of the Hilbert basis $(\varphi_{n,k})_{k}$ of $E_n$, as this can be directly checked in \eqref{deln} or by the rotational invariance of the Gaussian random vector used in \eqref{deffg}. An idea is thus to try to minimize the bound in \eqref{bound-generic} for a well-chosen basis $(\varphi_{n,k})$ of eigenfunctions, we indeed use the generic upper bound\footnote{The logarithmic factor comes from probabilistic arguments just as in \cite{burq-lebeau} and it is not known how to remove it in an explicit construction.} of \cite[Theorem 1.3]{PRT1}: for any $n\geq 2$, one has   
\begin{equation*}
 \|\varphi_{n,k}\|_{L^\infty(\R^d)}\lesssim \frac{\sqrt{\log(n)}}{n^{d/4}}, 
\end{equation*}
Therefore, combining this with \eqref{deln} and \eqref{berns}, we obtain the upper-bound 
\begin{equation*}
  \delta_n(x,y)\lesssim \frac{\sqrt{n}|x-y|}{n^{d/4}} \sqrt{\log(n)}.
\end{equation*}
Using \cite[Proposition 4.1]{imek-crusep}, we may bound \eqref{deln}: 
\begin{equation*}
  \delta_n(x,y)^2 \leq \frac{4}{\dim(E_n)} \sup\limits_{x\in \R^d} \sum_{k=1}^{\dim(E_n)} \varphi_{n,k}(x)^2\lesssim \frac{1}{n^{d/2}}, 
\end{equation*}
so that we finally arrive at 
\begin{equation*}
  \delta_n(x,y) \lesssim n^{-\frac{d}{4}}\min\left(1,\sqrt{n \log(n)}|x-y| \right).
\end{equation*}
The previous bounds are indeed true for any $(x,y)\in \R^d\times \R^d$ but as mentioned before, we will restrict the couples $(x,y)$ to a subregion of the allowed region in order to obtain lower bounds. More precisely, we shall prove in Proposition \ref{del-e} that, upon assuming $|x-y|\leq 1$ and $|x|=|y|=1$, the following two-sided estimates hold true
\begin{equation}\label{equ2}
\delta_n(x,y) \simeq n^{-\frac{d}{4}}\min\left(1,\sqrt{n}|x-y| \right).
\end{equation}
These estimates constitute the core of the proof of \cref{teo}. To get such estimates, one needs precise asymptotics of the so-called spectral function for the eigenvalue $2n+d$:
\begin{equation*}
e_{d,n}(x,y):=\sum_{k=1}^{\dim(E_n)} \varphi_{n,k}(x)\varphi_{n,k}(y), 
\end{equation*}
which, to the best of the knowledge of the authors is not already available in the literature. 
Here is a precise statement in which $\widetilde{J}_{\frac{d}{2}-1}$ denotes the normalized Bessel function:
\begin{equation}\label{norm-bess}
\widetilde{J}_{\frac{d}{2}-1}(t):=\Big(\frac{2}{t} \Big)^{\frac{d}{2}-1} J_{\frac{d}{2}-1}(t),\qquad \forall t>0
\end{equation}
with the convention $\widetilde{J}_{\frac{d}{2}-1}(0)=\frac{1}{\Gamma(\frac{d}{2})}$ (by continuous extension). We also refer to Section \ref{proof-sp} for graphical representations about the approximation \eqref{en-bessel} and the more precise formulas \eqref{red2a} and \eqref{red2b} for $d=2$.

\begin{prop}\label{en} 
Let us assume $d \geq 2$. The following asymptotic estimates\footnote{By comparison with \cite[line (8)]{canzani2015scaling}, it is maybe more reasonable to highlight the eigenvalue $2n+d$ and write the principal term as 
\begin{equation*}
\frac{n^{\frac{d}{2}-1}}{(2\pi)^{\frac{d}{2}}} \widetilde{J}_{\frac{d}{2}-1}(\sqrt{2n+d}|x-y|).
\end{equation*}
This is indeed equivalent to \eqref{en-bessel} because $\widetilde{J}_{\frac{d}{2}-1}$ is Lipschitz (see the computations made in \cite[Section 4]{imek-hold} in which $\widetilde{J}_{\frac{d}{2}-1}$ is denoted by $W$) and thus 
\begin{equation*}
n^{\frac{d}{2}-1}\Big|\widetilde{J}_{\frac{d}{2}-1}(\sqrt{2n+d}|x-y|)-\widetilde{J}_{\frac{d}{2}-1}(\sqrt{2n}|x-y|)\Big|=\mathcal{O}\Big( n^{\frac{d}{2}-1}(\sqrt{2n+d}-\sqrt{2n})\Big)=\mathcal{O}\big( n^{\frac{d-3}{2}}\big).
\end{equation*}
} as $n\rightarrow +\infty$
\begin{equation}\label{en-bessel}
e_{d,n}(x,y)=\frac{n^{\frac{d}{2}-1}}{(2\pi)^{\frac{d}{2}}} \widetilde{J}_{\frac{d}{2}-1}(\sqrt{2n}|x-y|)+\rem
\end{equation}
are uniform with respect to $(x,y)\in \S^{d-1} \times \S^{d-1}$ satisfying $|x-y|\leq 1$.

There exist constants $c>0$ and $\ep_d\in(0,1)$ satisfying the following asymptotic estimates uniformly with respect to $(x,y)$: 
\begin{eqnarray}\label{en-diag} 
e_{d,n}(x,x) &= &\frac{n^{\frac{d}{2}-1}}{(2\pi)^{\frac{d}{2}}\Gamma \left(\frac{d}{2} \right)} +\left\{ \begin{array}{lcl}\mathcal{O}\left(n^{\frac{d-3}{2}} \log(n)\right) &\mbox{for} & d\geq 3, \\
\mathcal{O}(n^{-\frac{1}{4}})&\mbox{for} & d= 2,\end{array}  \right.\\ \label{en-loin}
|e_{d,n}(x,y)|  & \leq & \ep_d e_{d,n}(x,x),\quad  \text{ if } \frac{c}{\sqrt{n}}\leq |x-y|\leq 1, \\
 \label{en-proche}
e_{d,n}(x,x)-e_{d,n}(x,y)  & \simeq &  n^{\frac{d}{2}}|x-y|^2, \quad \text{ if } |x-y|\leq \frac{c}{\sqrt{n}}.
\end{eqnarray}
The constants of the last equivalence may depend on $d$ only. 
\end{prop}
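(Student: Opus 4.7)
The plan is to represent $e_{d,n}(x,y)$ via the Mehler generating function, extract its $n$-th Taylor coefficient by a Cauchy contour integral, and recognize the leading asymptotic as a Hankel integral for the Bessel function $\widetilde{J}_{\frac{d}{2}-1}$.

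First, fix a tensor-product basis $\{h_\alpha\}_{|\alpha|=n}$ of Hermite functions spanning $E_n$ (the spectral function $e_{d,n}$ being independent of the choice of orthonormal basis). The classical Mehler formula yields, for $|z|<1$,
\begin{equation*}
G(z,x,y) := \sum_{n\geq 0} z^n e_{d,n}(x,y) = \frac{1}{(\pi(1-z^2))^{d/2}}\exp\!\left(\frac{4z\,x\cdot y - (1+z^2)(|x|^2+|y|^2)}{2(1-z^2)}\right).
\end{equation*}
Restricting to $|x|=|y|=1$ and writing $r=|x-y|$, the exponent collapses to $-\frac{1-z}{1+z}-\frac{zr^2}{1-z^2}$, and Cauchy's formula gives $e_{d,n}(x,y) = \frac{1}{2\pi i}\oint_{|z|=\rho} G(z,x,y)\,z^{-n-1}\,dz$.

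Second, I would substitute $z=e^{-\sigma/n}$ and Taylor-expand the factors near $z=1$: one has $-\frac{1-z}{1+z}=-\frac{\sigma}{2n}+O(\sigma^2/n^2)$, $\frac{zr^2}{1-z^2}=\frac{nr^2}{2\sigma}+O(r^2/n)$ and $(\pi(1-z^2))^{-d/2}=(2\pi\sigma/n)^{-d/2}(1+O(\sigma/n))$. Deforming to a Hankel contour $C$ around the origin yields
\begin{equation*}
e_{d,n}(x,y) = \frac{n^{\frac{d}{2}-1}}{(2\pi)^{d/2}}\cdot\frac{1}{2\pi i}\int_C e^{\sigma - nr^2/(2\sigma)}\sigma^{-d/2}\,d\sigma \;+\;\text{error},
\end{equation*}
and the main integral is identified via the Hankel representation
\begin{equation*}
\widetilde{J}_\nu(z)=\frac{1}{2\pi i}\int_C e^{t-z^2/(4t)}t^{-\nu-1}\,dt
\end{equation*}
(with $\nu=\frac{d}{2}-1$ and $z=\sqrt{2n}\,r$) as $\widetilde{J}_{\frac{d}{2}-1}(\sqrt{2n}\,|x-y|)$, producing the leading term of \eqref{en-bessel}.

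The hardest step is the sharp, uniform control of the error for $r\in[0,1]$. The saddle points of the exponent sit at $\sigma_\star=\pm i r\sqrt{n/2}$, so when $r\sim 1$ they escape to scale $\sqrt{n}$ along $C$ and the naive expansion no longer yields a relative $O(1/n)$ correction; one must rather use the decay $\widetilde{J}_{\frac{d}{2}-1}(t)=O(t^{-(d-1)/2})$ together with a steepest-descent path tailored to $r$. For $d\geq 3$, the marginal integrability of the Bessel oscillations produces the logarithmic loss and the bound $\mathcal{O}(n^{(d-3)/2}\log n)$, while in dimension $d=2$ the leading Bessel function itself decays only like $t^{-1/2}$, the saddle contribution and the remainder have the same order, and one obtains the weaker $\mathcal{O}(n^{-1/4})$ (alternatively derivable via Plancherel--Rotach asymptotics of one-dimensional Hermite functions combined with the tensor structure).

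The three remaining bounds then follow from \eqref{en-bessel}. Setting $r=0$ and using $\widetilde{J}_{\frac{d}{2}-1}(0)=1/\Gamma(d/2)$ proves \eqref{en-diag}. For \eqref{en-proche} it is cleaner to exploit the factorization $G(z,x,y) = G(z,x,x)\exp(-zr^2/(1-z^2))$, so that $G(z,x,x)-G(z,x,y) = G(z,x,x)\bigl(\tfrac{zr^2}{1-z^2}+O(r^4/(1-z^2)^2)\bigr)$, and applying the same saddle analysis to this refined integrand yields $e_{d,n}(x,x)-e_{d,n}(x,y)\simeq n^{d/2}r^2$ uniformly for $r\leq c/\sqrt{n}$ (the $r^2$ prefactor absorbing the otherwise troublesome error). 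Finally, \eqref{en-loin} follows from \eqref{en-bessel} together with the fact that $\Gamma(d/2)\widetilde{J}_{\frac{d}{2}-1}(t)<1$ strictly for $t>0$ and tends to $0$ at infinity: fix $c$ so that $\Gamma(d/2)|\widetilde{J}_{\frac{d}{2}-1}(t)|\leq\varepsilon_d'<1$ for $t\geq c$, then absorb the $\mathcal{O}(n^{(d-3)/2}\log n)$ error into the leading $n^{\frac{d}{2}-1}$ to obtain \eqref{en-loin} with some $\varepsilon_d\in(\varepsilon_d',1)$.
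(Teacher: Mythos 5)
Your route---writing $e_{d,n}(x,y)$ as the $n$-th Taylor coefficient of the Mehler generating function, extracting it by a Cauchy contour integral and matching the $z\approx 1$ region with the Schl\"afli--Hankel representation of $\widetilde{J}_{\frac{d}{2}-1}$---is genuinely different from the paper's (which reduces to a one-dimensional Hermite sum via the variables $\hat{x},\hat{y}$, then uses Muckenhoupt's asymptotics and Euler--Maclaurin), and your identification of the leading term, including the constants, is correct. But the actual content of \eqref{en-bessel} is the \emph{uniform} remainder, and here the proposal has a genuine gap: you flag the ``hardest step'' and then only gesture at it. Concretely, on the circle $|z|=e^{-1/n}$ the integrand is non-negligible not only near $z=1$ but also near $z=-1$ (this reflects the parity identity \eqref{par}); that region contributes the term $(-1)^n\frac{n^{\frac d2-1}}{(2\pi)^{d/2}}\widetilde{J}_{\frac d2-1}(\sqrt{2n}\,|x+y|)$ of \eqref{seco-term}, which for $|x+y|\geq\sqrt3$ has size $n^{\frac{d-3}{4}}$ and, for $d=2$, is precisely the source of the $\mathcal{O}(n^{-1/4})$ remainder in \eqref{en-bessel} and \eqref{en-diag}. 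Your sketch never analyses this region, your stated mechanism for the $d=2$ loss is not the right one, and the $\log n$ loss for $d\geq3$ is asserted rather than derived. Since the saddle points sit at distance $\sim r\sqrt n$ (resp.\ $\sim|x+y|\sqrt n$) from the singularities, uniformity in $r\in[0,1]$ is exactly the delicate point, and without a worked-out steepest-descent estimate the stated error orders are not proved.

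The second gap concerns \eqref{en-proche}, which---as the paper stresses---cannot be deduced from \eqref{en-bessel}: for $|x-y|\ll n^{-3/4}$ the quantity $n^{d/2}|x-y|^2$ lies far below the additive remainder of \eqref{en-bessel}, so ``applying the same saddle analysis to the refined integrand'' must yield a \emph{multiplicative} error in $r^2$, which you assert but do not establish. Your factorization $G(z,x,y)=G(z,x,x)\exp(-zr^2/(1-z^2))$ is correct and can in fact be exploited, but by coefficient positivity rather than by a saddle argument: both $G(z,x,x)$ and $z/(1-z^2)$ have nonnegative coefficients, so $[z^n]\,G(z,x,x)\tfrac{zr^2}{1-z^2}=r^2\sum_{m<n,\ n-m\ \mathrm{odd}}e_{d,m}(x,x)\simeq r^2n^{d/2}$ once the diagonal bounds \eqref{en-diag} are in hand, while the $j$-th term of $1-e^{-w}$, $w=zr^2/(1-z^2)$, contributes $\mathcal{O}\big(r^2n^{d/2}(Cr^2n)^{j-1}\big)$ and is absorbed for $|x-y|\leq c/\sqrt n$ with $c$ small. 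Carrying out such an argument (or the paper's derivative estimates of the Hermite products $\Omega_k$ in Propositions \ref{choix} and \ref{omder}) is what the proof of \eqref{en-proche} requires; as written it is claimed, not proved. By contrast, your deductions of \eqref{en-diag} (take $y=x$) and of \eqref{en-loin} (strict inequality $\Gamma(d/2)|\widetilde{J}_{\frac d2-1}(t)|<1$ for $t>0$ together with decay at infinity, then absorb the remainder) are correct and essentially coincide with the paper's, up to the harmless remark that one constant $c$ must serve both \eqref{en-loin} and \eqref{en-proche}.
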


The asymptotics of Proposition \ref{en} give complements about the ones given in \cite[Proposition 4.1]{imek-crusep} and we believe that there should exist a proof via microlocal analysis, likely involving an analysis similar to that of \cite{zel-harmo} or maybe by an adaptation of \cite[line (8)]{canzani2015scaling} to the harmonic oscillator.
 However, we have found a proof using a completely different strategy: a specific use of the Mehler formula allows us to express $e_{d,n}(x,y)$ in a very simple way 
as a formula (see \eqref{edn}) involving the unidimensional Hermite functions. For instance, for $d=3$, our formula \eqref{edn} reads
\begin{equation*}
e_{3,n}(x,y)=\frac{1}{\pi}\sum_{0\leq \ell \leq \frac{n}{2}} h_{n-2\ell}\Big(\frac{|x+y|+|x-y|}{2} \Big)h_{n-2\ell}\Big(\frac{|x+y|-|x-y|}{2}\Big).
\end{equation*}

The proof of \eqref{en-bessel} is then a consequence of known approximations of unidimensional Hermite functions (see Proposition \ref{herm}) and an elementary use of the
Euler-Maclaurin formula on comparison of series and integrals.

Let us add that \eqref{en-diag} and \eqref{en-loin} are easy consequences of the asymptotics given by \eqref{en-bessel}. However \eqref{en-proche} needs a little work and cannot be deduced from \eqref{en-bessel}.

\begin{rema}
Due to a rotational invariance, the left-hand side of \eqref{en-diag} does not depend on $x\in \S^{d-1}$. More critical is the restriction $|x-y|\leq 1$ in \eqref{en-loin} since the estimates fail in general without this restriction, taking for instance $y=-x$ so that $|y-x|=2$ and using the following equality (which is a consequence of \eqref{edn} as we shall see below):
\begin{equation}\label{par}
  e_{d,n}(x,y)=(-1)^n e_{d,n}(x,-y).
\end{equation}
\end{rema}

Let us now write a few words about the organization of the paper and the notations we shall use.

As written above, Section \ref{proof-sp} contains a few graphical representations comforting the comparison \eqref{en-bessel} with the Bessel functions of Proposition \ref{en}.

Section \ref{proofteo} contains the proof of \cref{teo} upon admitting Proposition \ref{en}.
By comparison with the strategy used in \cite{imek-jep} that essentially deals with Dudley pseudo-distances involving term like $\min(1,n|x-y|)$, we remark in our paper that
in some sense all terms like $\min(1,n^\theta |x-y|)$ with $\theta>0$ play an equivalent role 
(see Proposition \ref{ups-the}). This allows us to deal with \eqref{equ2}.

Section \ref{proof-en} is devoted to the proof of Proposition \ref{en}. The starting point is to suitably compare the Mehler formula at dimensions $1$ and $d$ (see the proof of Proposition \ref{prop.edn}) in order to obtain a simple formula of $e_{d,n}(x,y)$.

Appendix \ref{anxA} gives elements of the proof of Theorem \ref{thm.main-harmonic}.

Appendices \ref{anxB} and \ref{pr-lemcos} finally contain a few technical lemmas.

The symbols $\simeq$, $\gtrsim$ and $\lesssim$ are always understood with constants of equivalence that may depend on the dimension $d$.

\section{Graphical representations of the spectral function restricted to $\S^{d-1}\times \S^{d-1}$}\label{proof-sp}

We recall that the spectral function associated to the harmonic oscillator $-\Delta+|x|^2$ on $\mathbb{R}^d$ acting on $E_n$, which is defined for any $(x,y)\in \R^d\times \R^d$ by 
\begin{equation*}
  e_{d,n}(x,y)=\sum_{k=1}^{\dim(E_n)}\varphi_{n,k}(x)\varphi_{n,k}(y)
\end{equation*}
in which $(\varphi_{n,k})$ is a Hilbert basis of $E_n$ (see \eqref{Enbasis}).
Although we are concerned with $d\geq 2$, it will be useful to consider the case $d=1$. The spectral function $e_{1,n}$ can indeed be written with $L^2(\R)$-normalized Hermite functions $h_n$ as  
\begin{equation}\label{fs}
  e_{1,n}(x,y)=h_n(x)h_n(y),\qquad \mbox{where} \quad h_n(x)=\frac{H_n(x)e^{-x^2/2}}{\sqrt{n! 2^n\sqrt{\pi} }},
\end{equation}
in which $(H_n)_{n\geq 0}$ is the standard sequence of Hermite polynomials ($H_0=1,H_1(x)=2x,\dots$). The following formula obtained by tensorization of $(h_n)$ is well-known:

\begin{equation*}
e_{d,n}(x,y)=\sum_{\substack{(i_1,\dots,i_d)\in \N^d \\ i_1+\dots+i_d=n}}h_{i_1}(x_1)h_{i_1}(y_1)\dots h_{i_d}(x_d)h_{i_d}(y_d).
\end{equation*} 

Since $-\Delta+|x|^2$ presents a rotational invariance, the spectral function $e_{d,n}$ should also have a similar invariance (that is not clear in the last formula). Actually, one can check that $e_{d,n}(x,y)$ merely depends on $|x-y|$ and $|x+y|$ (or equivalently $|x|^2+|y|^2$ and $\langle x,y\rangle$, see for instance the two Mehler formulas \eqref{mehler0} and \eqref{mehler}). We refer to Proposition \ref{prop.edn} for another formula of $e_{d,n}(x,y)$ that will play a fundamental role in the present paper.

In the statement of Proposition \ref{en}, due to the conditions $|x|=|y|=1$ and $|x-y|\leq 1$, we deduce that $e_{d,n}(x,y)$ merely depends\footnote{thanks to the identity $
|x+y|=\sqrt{4-|x-y|^2}$.} on $|x-y|$. Hence, we may write 
\begin{equation*}
e_{d,n}(x,y)=e_{d,n}\big(  (1,0,\dots,0),    (\cos(\alpha),\sin(\alpha),0,\dots,0) \big)
\end{equation*} 
for a suitable $\alpha\in \big[0,\frac{\pi}{3}\big]$ satisfying $|x-y|=|1-e^{i\alpha}|=2\sin(\alpha/2)$ and so $\alpha=2\arcsin\big(\frac{|x-y|}{2}\big)$.


The previous considerations allow us to obtain a few graphical representations for instance for $n=150$ with respect to $|x-y|$ running over $[0,1]$.

\begin{center}
\begin{tabular}{m{1cm} m{6cm} m{6cm}}
& $\qquad \qquad \qquad \frac{1}{n^{\frac{d}{2}-1}} e_{d,n}(x,y)$ & $\quad \quad \qquad \frac{1}{(2\pi)^{\frac{d}{2}}} \widetilde{J}_{\frac{d}{2}-1}(\sqrt{2n}|x-y|)$ \\
$d=4$ & \includegraphics[scale=0.3]{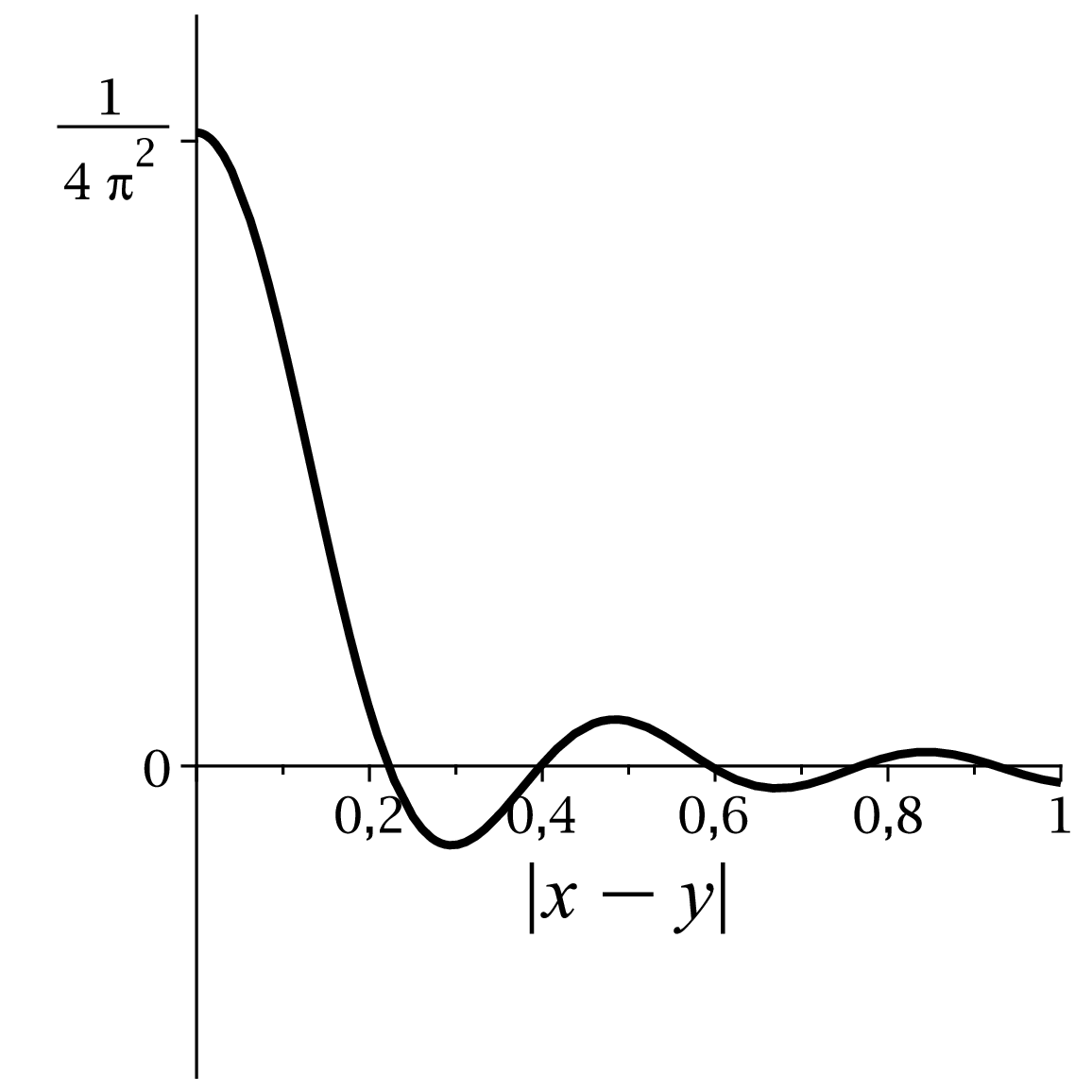} & \includegraphics[scale=0.3]{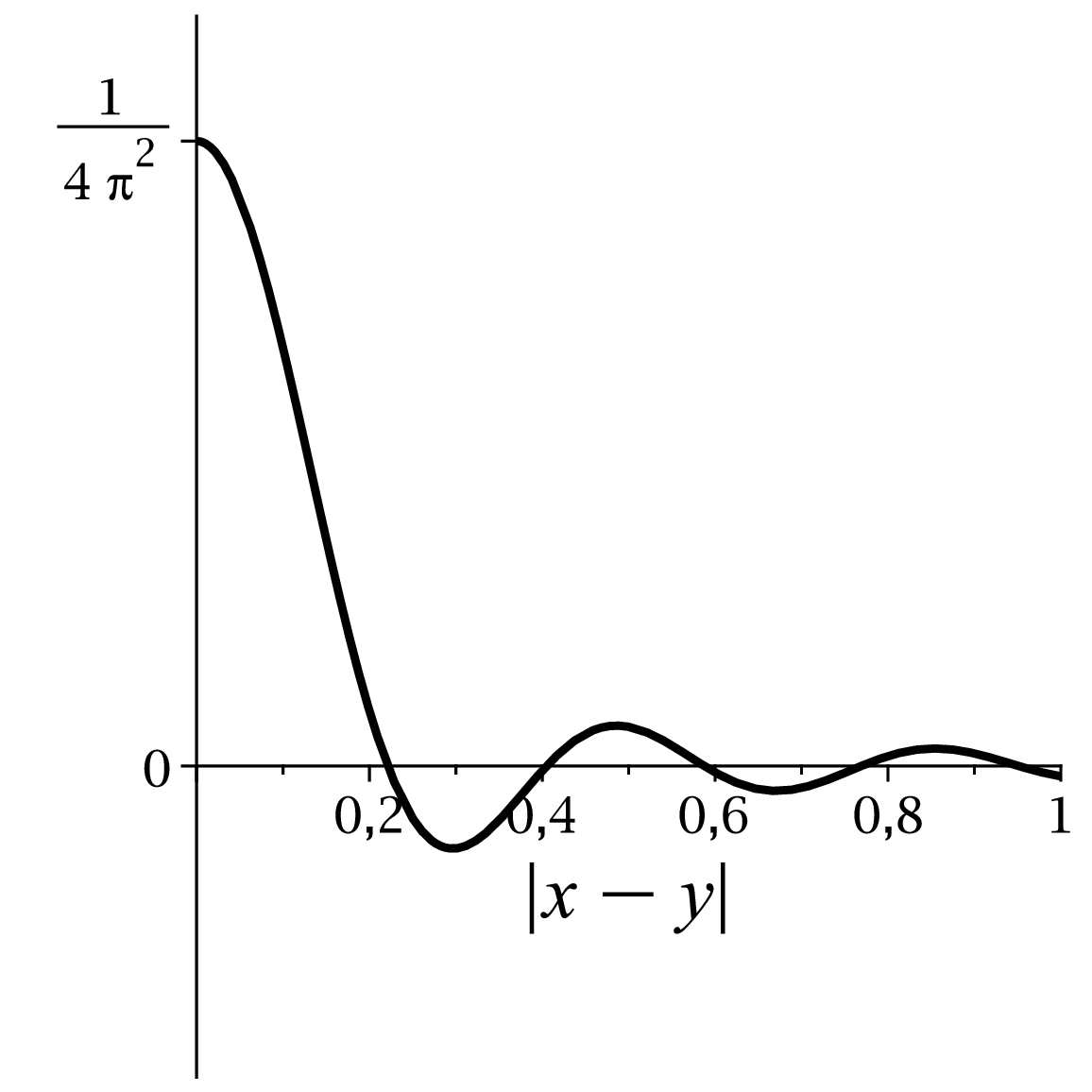}
\end{tabular}
\end{center}
\begin{center}
\begin{tabular}{m{1cm} m{6cm} m{6cm}}
$d=3$ & \includegraphics[scale=0.3]{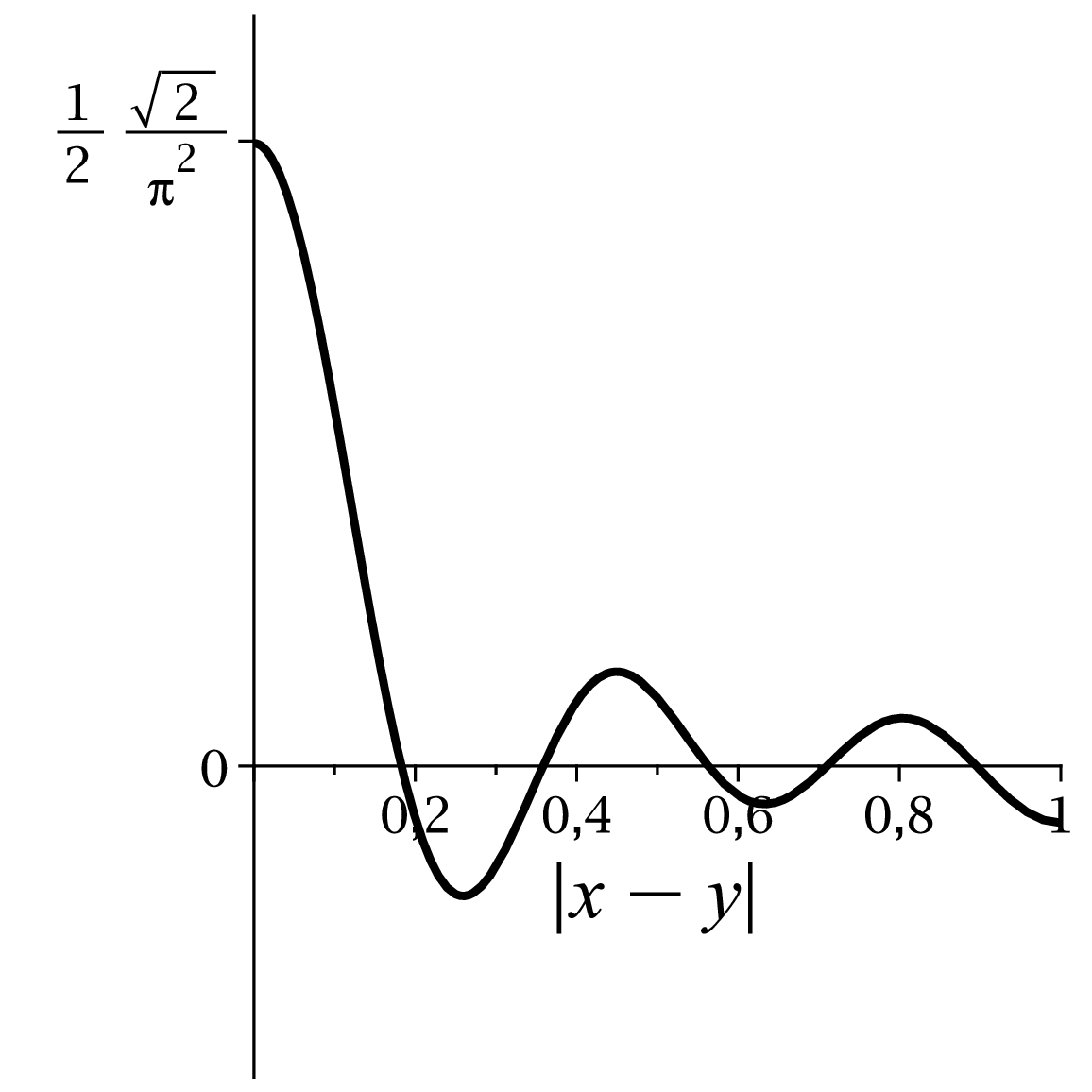} & \includegraphics[scale=0.3]{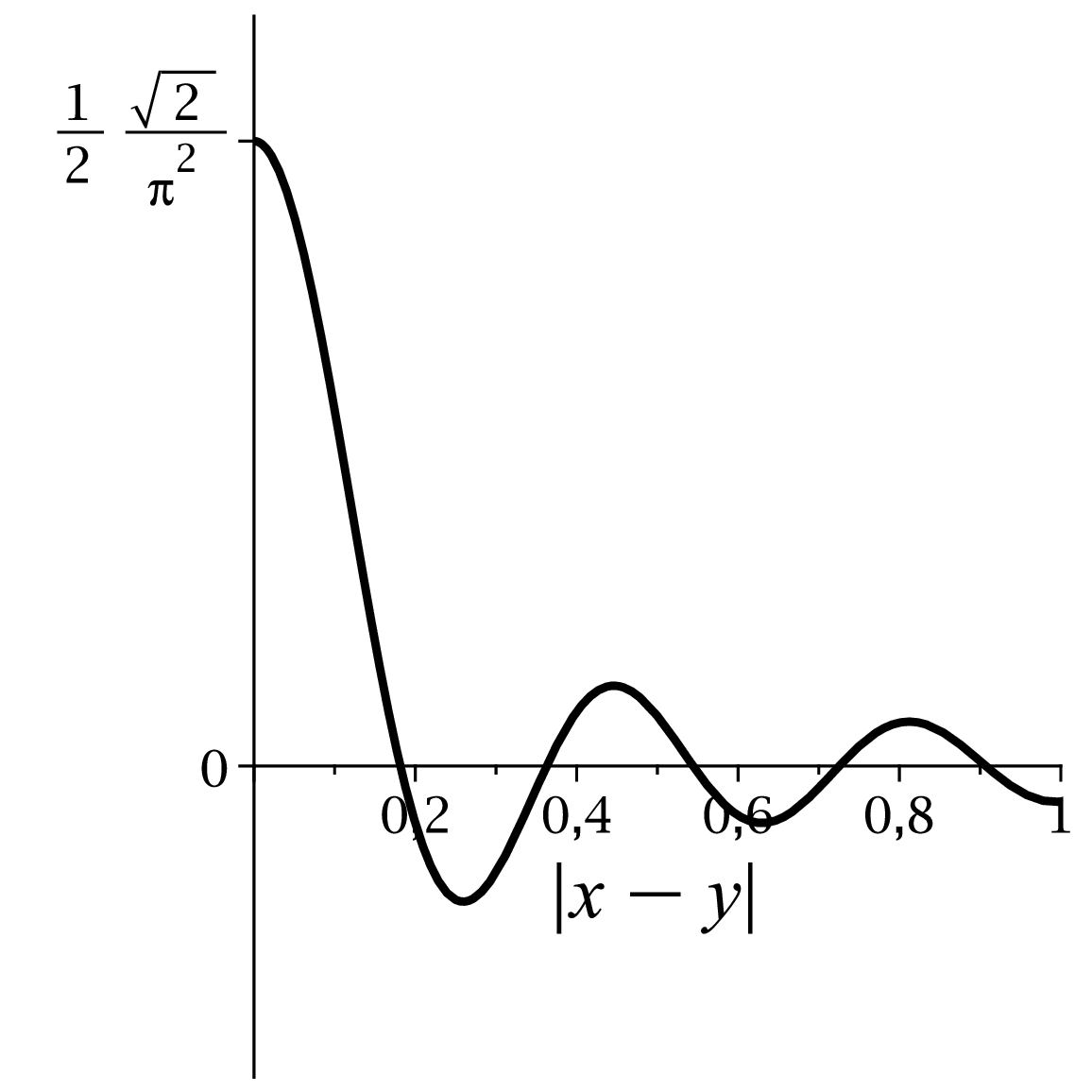}
\end{tabular}
\end{center}
\begin{center}
\begin{tabular}{m{1cm} m{6cm} m{6cm}}
$d=2$ & \includegraphics[scale=0.3]{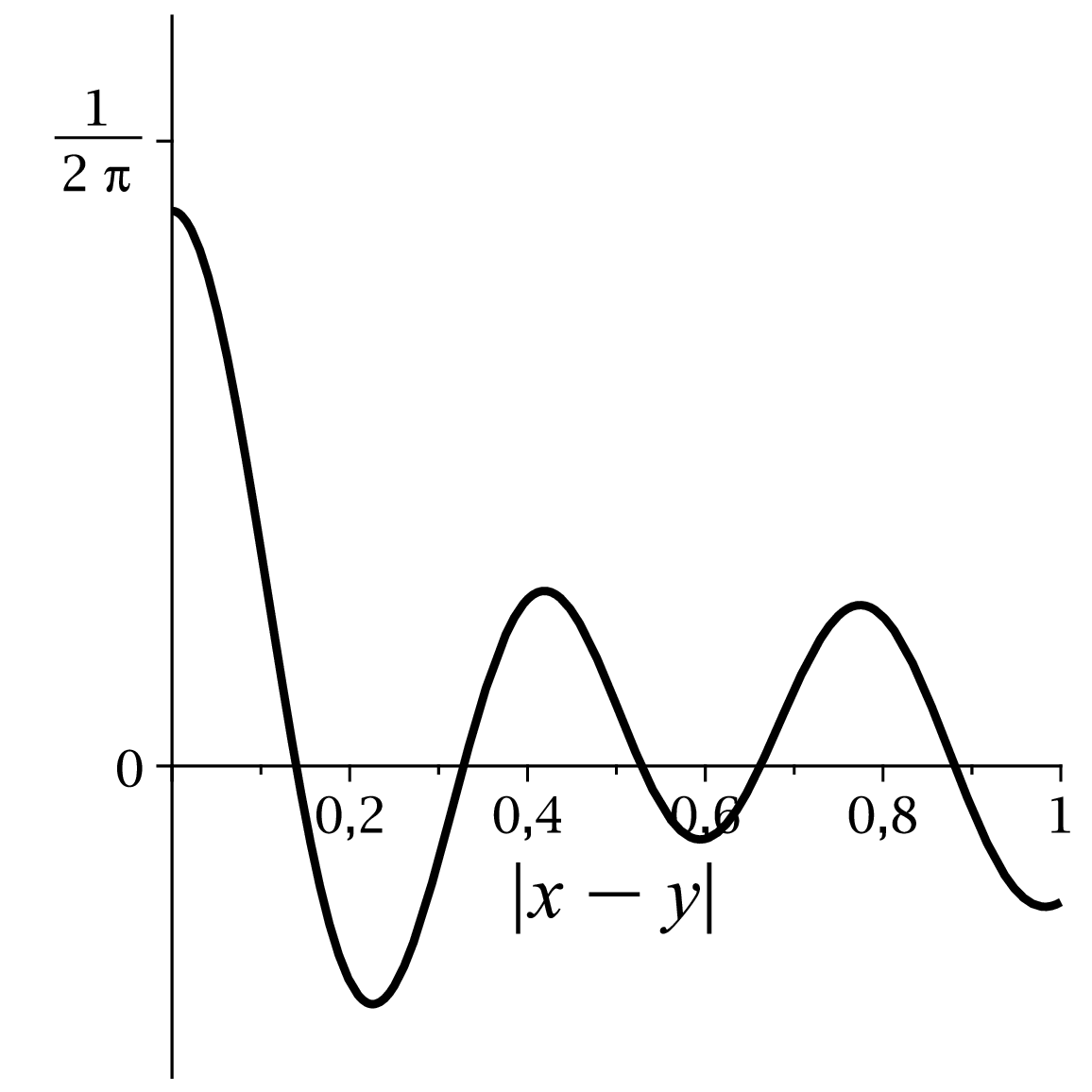} & \includegraphics[scale=0.3]{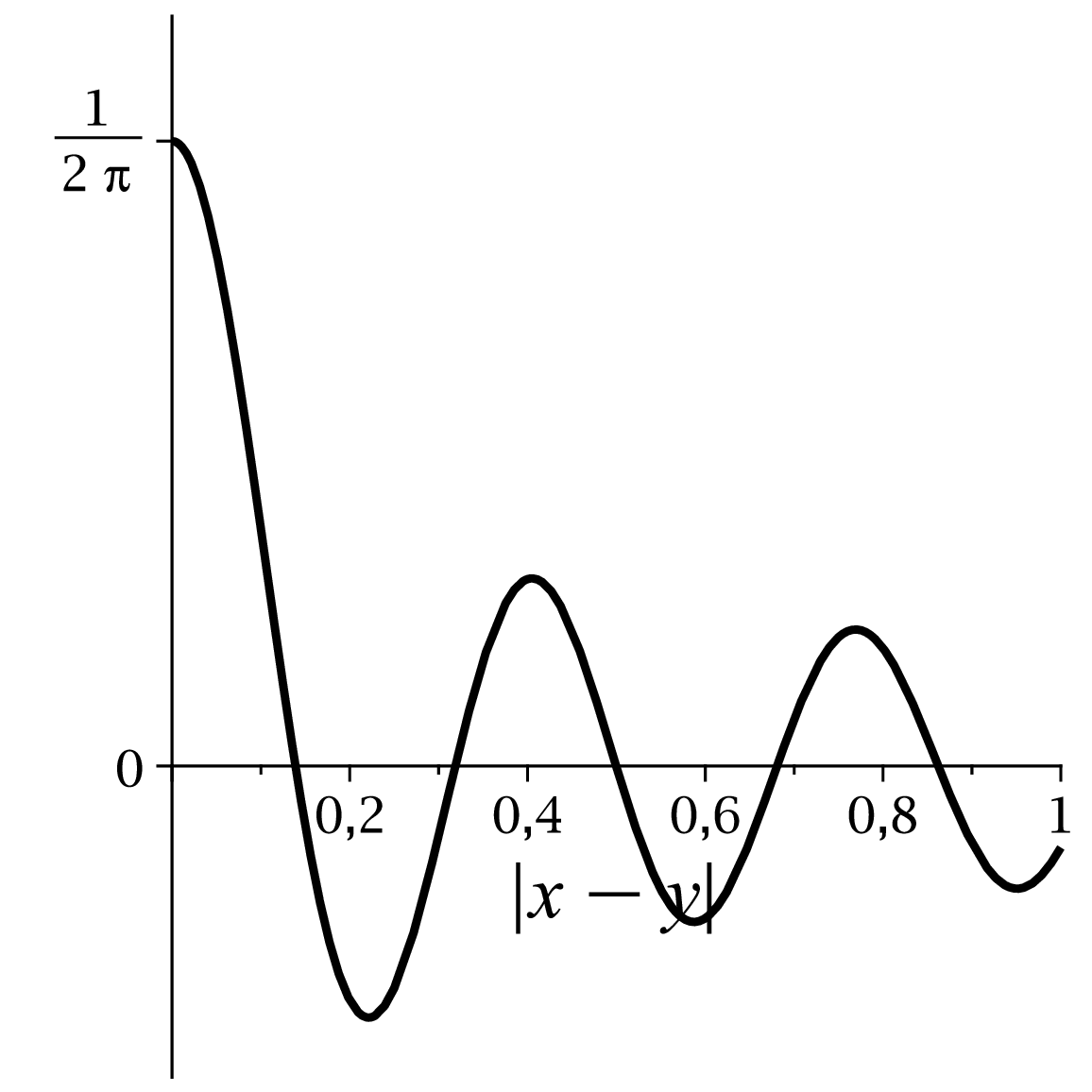}
\end{tabular}
\end{center}
In the last picture, the case $d=2$ seems to be slightly bad-approximated for $|x-y|=0$ and $n=150$. This is due to the slow remainder $n^{-\frac{1}{4}}$ in \eqref{en-diag}. Actually , we will give two proofs of \eqref{en-bessel} and the second one enlightens that the remainder $n^{-\frac{1}{4}}$ comes from the asymptotics of the Bessel function $J_0$.
More precisely, the formula \eqref{seco-term} below can be simplified as follows for $d=2$:
\begin{eqnarray}\label{red2a}
e_{2,n}(x,y)& =&\frac{1}{2\pi}\big( J_0(\sqrt{2n}|x-y|)+(-1)^n J_0(\sqrt{2n}|x+y|)\big)+
\mathcal{O}\Big(\frac{\log(n)}{\sqrt{n}}\Big),\\\label{red2b}
e_{2,n}(x,x)& =&\frac{1}{2\pi}\big( 1+(-1)^n J_0(2\sqrt{2n})\big)+
\mathcal{O}\Big(\frac{\log(n)}{\sqrt{n}}\Big).
\end{eqnarray}
We recall that $x$ and $y$ belong to $\S^{d-1}$ and that $e_{2,n}(x,x)$ does not depend on $x$.
Here is a graphical representation of the first points 
$(n,e_{2,n}(x,x))$ and we see that their ordinates are asymptotically 
\begin{equation*}
\frac{1}{2\pi}\big( 1\pm |J_0(2\sqrt{2n}|)\big)=\frac{1}{2\pi}+\mathcal{O}\Big(\frac{1}{n^{1/4}} \Big).
\end{equation*}

\begin{center}
\includegraphics[scale=0.75]{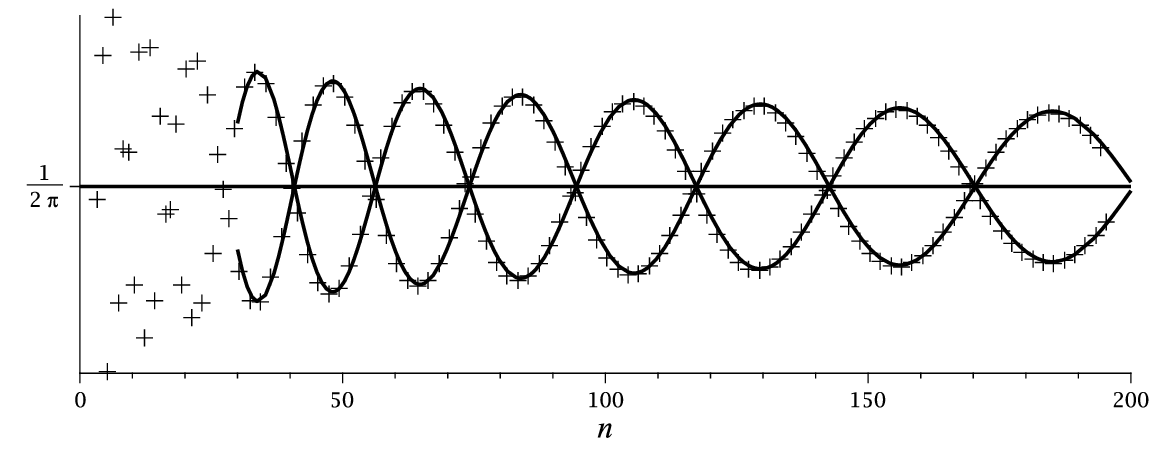}
\end{center}



\section{Proof of \cref{teo}}\label{proofteo}

As claimed in the introduction, the proof of the sufficient condition does not pose any major difficulty so we start with this part in \cref{sec.sufficient}. Then in \cref{sec.necessary} we give the proof of the necessary condition in \cref{teo} upon admitting crucial estimates on the spectral function whose proofs are delayed. 

\subsection{Proof of the sufficient condition}\label{sec.sufficient}

Our goal is to prove the implication (\textit{i}) $\Longrightarrow$ (\textit{ii}) of \cref{teo}.

We shall take our inspiration from the seminal paper \cite{salem1954} which uses the Cauchy condensation test with specific dyadic subseries and an adaptation of the argument of \cite[Section 6]{imek-aif} for Gaussian random variables. More precisely, the Cauchy condensation test allows to reformulate the condition \eqref{sz} as
\begin{equation*}
\sum_{\ell\geq 1} \frac{1}{\sqrt{\ell}} \Big(\sum_{n=2^\ell}^{+\infty} \frac{\|f_n\|_{L^2(\R^d)}^2}{n^{d/2}} \Big)^{1/2} <+\infty.
\end{equation*}
Another use of the Cauchy condensation test reformulates \eqref{sz} as
\begin{equation*}
\sum_{\ell\geq 0} 2^{\ell/2} \Big(\sum_{n=2^{2^\ell}}^{+\infty} \frac{\|f_n\|_{L^2(\R^d)}^2}{n^{d/2}}\Big)^{1/2} <+\infty,
\end{equation*}
so that in particular we get
\begin{equation}\label{sz2}
\sum_{\ell\geq 0} 2^{\ell/2} \Big(\sum_{2^{2^\ell}\leq n < 2^{2^{\ell+1}}} \frac{\|f_n\|_{L^2(\R^d)}^2}{n^{d/2}}\Big)^{1/2} <+\infty.
\end{equation}

We recall the following classical lemma. 

\begin{lemm}\label{lem.gaussian-sup} 
Let $(g_{i,j})_{i,j}$ be a collection of $I\times J$ i.i.d. standard $\mathcal{N}(0,1)$ Gaussian random variables. For any matrix $A\in \mathcal{M}_{I\times J}(\R)$ the following inequality holds true:
\begin{equation*}
\E_\omega\left[  \sup\limits_{1\leq i\leq I} \left\vert \sum_{j=1}^J a_{i,j} g_{i,j}(\omega) \right\vert\right]\leq C \sqrt{\log(2+I)} \sup\limits_{1\leq i \leq I} \Big(\sum_{j=1}^J a_{i,j}^2 \Big)^{1/2}.
\end{equation*}
\end{lemm}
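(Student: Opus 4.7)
The plan is to reduce to the standard bound for the maximum of a finite collection of centered Gaussians. Set $X_i(\omega):=\sum_{j=1}^J a_{i,j} g_{i,j}(\omega)$ for $1\leq i\leq I$. Since the $g_{i,j}$ are i.i.d.\ $\mathcal{N}(0,1)$, each $X_i$ is a centered Gaussian with variance $\sigma_i^2=\sum_{j=1}^J a_{i,j}^2$; in particular
\begin{equation*}
\sigma^2:=\sup_{1\leq i\leq I} \sigma_i^2 = \sup_{1\leq i\leq I}\Big(\sum_{j=1}^J a_{i,j}^2\Big),
\end{equation*}
and what we need to prove is $\mathbb{E}[\sup_i |X_i|]\leq C\sqrt{\log(2+I)}\,\sigma$. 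The random variables $X_i$ need not be independent, but only the control of their variances matters.

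The route I would take is the standard Chernoff/moment generating function argument. For every $\lambda>0$, the Gaussian MGF gives $\mathbb{E}[\exp(\lambda X_i)]=\exp(\lambda^2 \sigma_i^2/2)\leq \exp(\lambda^2\sigma^2/2)$. Applying Jensen's inequality to the convex function $t\mapsto e^{\lambda t}$ together with the crude bound $\max_i u_i\leq \sum_i u_i$ for $u_i\geq 0$, I obtain
\begin{equation*}
\exp\!\big(\lambda \mathbb{E}[\sup_i X_i]\big)\leq \mathbb{E}\!\left[\sup_i e^{\lambda X_i}\right]\leq \sum_{i=1}^I \mathbb{E}[e^{\lambda X_i}]\leq I\exp(\lambda^2\sigma^2/2),
\end{equation*}
so that $\mathbb{E}[\sup_i X_i]\leq \frac{\log I}{\lambda}+\frac{\lambda\sigma^2}{2}$. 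Optimizing in $\lambda$ yields $\mathbb{E}[\sup_i X_i]\leq \sigma\sqrt{2\log I}$.

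To pass to $|X_i|$, I would apply the same argument to the enlarged family $\{X_1,-X_1,\dots,X_I,-X_I\}$ of $2I$ centered Gaussians, each still of variance at most $\sigma^2$, which gives $\mathbb{E}[\sup_i |X_i|]\leq \sigma\sqrt{2\log(2I)}$. To accommodate the degenerate case $I=1$ (where $\log I=0$ but the right-hand side of the claim is still allowed to be nonzero, and indeed for a single Gaussian one has $\mathbb{E}|X_1|=\sigma\sqrt{2/\pi}$), I would replace $\log(2I)$ by $\log(2+I)$ up to adjusting the absolute constant $C$; this is harmless since $\log(2I)\leq \log(2+I)+\log 2$ for $I\geq 1$ and both quantities are bounded below by $\log 3$. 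There is no genuine obstacle in this argument: the only step requiring care is the $\log(2+I)$ versus $\log I$ cosmetic fix, which simply prevents a spurious vanishing at $I=1$.
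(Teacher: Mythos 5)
Your argument is correct: each $X_i=\sum_j a_{i,j}g_{i,j}$ is a centered Gaussian with variance $\sigma_i^2\leq\sigma^2$, the Jensen--Chernoff chain $\exp(\lambda\E[\sup_i X_i])\leq\sum_i\E[e^{\lambda X_i}]\leq I e^{\lambda^2\sigma^2/2}$ is valid (independence of the $X_i$ is indeed irrelevant), the optimization in $\lambda$ gives $\sigma\sqrt{2\log I}$, and the doubling trick $\{ \pm X_i\}$ plus the harmless replacement of $\log(2I)$ by $\log(2+I)$ handles absolute values and the case $I=1$. The paper, however, does not argue this way: it simply cites a proof (Queff\'elec, Th.~IV.5, and Lemma 7.3 of the first author's earlier paper) in which the bound is obtained for Rademacher sums via the Khintchine inequalities, i.e.\ the moment method: one bounds $\sup_i|X_i|\leq\big(\sum_i|X_i|^q\big)^{1/q}$, uses $\|X_i\|_{L^q}\lesssim\sqrt{q}\,\sigma_i$ (Khintchine/Gaussian moment growth), and chooses $q\simeq\log(2+I)$; the Gaussian case follows because Khintchine-type inequalities hold there as well. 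So your route is genuinely different in technique, though equally standard: the exponential-moment argument is fully self-contained and exploits the exact Gaussian MGF, whereas the moment/Khintchine route referenced in the paper transfers verbatim to Rademacher and more general subgaussian coefficients and is why the paper can dispatch the lemma by citation. Either proof is acceptable here; there is no gap in yours.
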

\begin{proof}
See for instance of proof in \cite[Theorem IV.5, page 34]{queff1} for Rademacher random variables instead of Gaussian random variables (but the proof essentially relies on Khintchine inequalities which are known to be true in the Gaussian case, see \cite[Cor. V.27, page 256]{queff1}). See also \cite[Lemma 7.3, page 2746]{imek-aif}.
\end{proof}

\begin{proof}[Proof of Theorem \cref{teo} (i) $\Longrightarrow$ (ii)]
We start by applying a ``mesh strategy'' taken from \cite[Section 19]{imek-aif}: 
as seen in \cite{imek-aif}, for any integer $N\geq 1$, there exists a finite subset $\mathcal{X}_N$ of $\R^d$ satisfying $\log (2+\mbox{Card}(\mathcal{X}_N))\lesssim \log N$ and 
\begin{equation}\label{fini}
  \forall u\in  E_0+E_1+\dots+E_N\qquad \sup\limits_{x\in \R^d} |u(x)|\leq 2 \max\limits_{x\in\mathcal{X}_N} |u(x)|.
\end{equation}
Such a property is called the \textit{finite subset concentration} assumption in \cite{imek-aif} and comes from the Gaussian tail behavior of Hermite functions and from a Bernstein inequality.

Let us set $N=2^{2^{\ell+1}}$, so that we have $\sqrt{\log(2+\mbox{Card}(\mathcal{X}_N))}\lesssim 2^{\ell/2}$. Using \eqref{fini} and Lemma \ref{lem.gaussian-sup} we have: 
\begin{align*}
  \E_\omega\Big[\sup\limits_{x\in \R^d}  \Big\vert \sum_{2^{2^{\ell}}\leq n <2^{2^{\ell+1}}} f_n^{G,\omega}(x)   \Big\vert  \Big] &\leq 2\E_\omega\Big[\sup\limits_{x\in \mathcal{X}_{N}}  \Big\vert \sum_{2^{2^{\ell}}\leq n <2^{2^{\ell+1}}} f_n^{G,\omega}(x)   \Big\vert  \Big] \\
   & = 2 \E_\omega\Big[\sup\limits_{x\in \mathcal{X}_{N}}  \Big\vert \sum_{2^{2^{\ell}}\leq n <2^{2^{\ell+1}}} \frac{\|f_n\|_{L^2(\R^d)}}{\sqrt{\dim(E_n)}} \sum_{k=1}^{\dim(E_n)} g_{n,k}(\omega)\varphi_{n,k}(x)  \Big\vert  \Big] \\ 
  &\leq  C2^{\ell/2}\sup\limits_{x\in \mathcal{X}_N  } \Big(\sum_{2^{2^{\ell}}\leq n < 2^{2^{\ell+1}}} \frac{\|f_n\|_{L^2(\R^d)}^2}{\dim(E_n)} \sum_{k=1}^{\dim(E_n)} \varphi_{n,k}(x)^2   \Big)^{1/2}.
\end{align*}
Next, we use the equivalence $\dim(E_n)\sim c_dn^{d-1}$ and we recall that the spectral function $x\mapsto \sum\limits_{k=1}^{\dim(E_n)} \varphi_{n,k}(x)^2 $ associated to the harmonic oscillator satisfies the following uniform bound 
\begin{equation*}
  \sum_{k=1}^{\dim(E_n)} \varphi_{n,k}(x)^2 \lesssim n^{\frac{d}{2}-1},
\end{equation*}
for which we refer to \cite[Prop 4.1, page 330]{imek-crusep} or \cite[Coro 3.2 for $p=\infty$]{KoTa05}. Therefore, we get 
\begin{equation*}
  \E_\omega\Big[\sup\limits_{x\in \R^d}  \Big\vert \sum_{2^{2^{\ell}}\leq n <2^{2^{\ell+1}}} f_n^{G,\omega}(x) \Big\vert  \Big]  
  \leq C 2^{\ell/2} \Big(\sum_{2^{2^\ell}\leq n < 2^{2^{\ell+1}}} \frac{\|f_n\|_{L^2(\R^d)}^2}{n^{d/2}} \Big)^{1/2},
\end{equation*}
and from \eqref{sz2}, the following finiteness follows 
\begin{equation*}
\E_\omega\Big[\sum_{\ell\in \N} \sup\limits_{x\in \R^d}  \Big\vert \sum_{2^{2^{\ell}}\leq n <2^{2^{\ell+1}}} f_n^{G,\omega}(x) \Big\vert  \Big]  <+\infty.
 \end{equation*}
With probability $1$, one infers that the sequence of functions $\Big(\sum\limits_{n=0}^{2^{2^\ell}-1}  f_n^{G,\omega} \Big)_{\ell\in \N} $ uniformly converges in the Banach space $L^\infty(\R^d)$, which in turn implies that the Gaussian series $\sum f_n^{G,\omega}$ almost surely converges in $L^\infty(\R^d)$ thanks to 
\cite[Theorem III.5, page 132]{queff1}.
\end{proof}

\subsection{Proof of the necessary condition}\label{sec.necessary}

We prove the implication (\textit{iii}) $\Longrightarrow$ (\textit{i}) of \cref{teo}. As mentioned in the introduction this part heavily relies on the following estimate
(we recall that Proposition \ref{en} will be proved below). 

\begin{prop}\label{del-e} 
As $n\to \infty$, the following asymptotics of $\delta_n$, defined in \eqref{deln}, hold uniformly on the set $\{(x,y)\in \S^{d-1}\times \S^{d-1}: |x-y|\leq 1\}$,
\begin{equation*}
  \delta_n(x,y) \simeq n^{-\frac{d}{4}} \min(1,\sqrt{n}|x-y|).
\end{equation*}
\end{prop}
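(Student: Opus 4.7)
The plan is to reduce everything to Proposition \ref{en} by expressing $\delta_n$ directly in terms of the spectral function $e_{d,n}$. Expanding the square in the definition \eqref{deln} yields
\begin{equation*}
\delta_n(x,y)^2 = \frac{1}{\dim(E_n)}\bigl(e_{d,n}(x,x)+e_{d,n}(y,y)-2e_{d,n}(x,y)\bigr).
\end{equation*}
Because $-\Delta+|x|^2$ commutes with the action of $O(d)$, each eigenspace $E_n$ is invariant under rotations, so the diagonal $x\mapsto e_{d,n}(x,x)$ is a radial function; in particular it is constant on $\S^{d-1}$. Combined with $\dim(E_n)\simeq n^{d-1}$, this turns the task into proving
\begin{equation*}
e_{d,n}(x,x)-e_{d,n}(x,y) \simeq n^{d/2}\min\bigl(n^{-1}, |x-y|^2\bigr)
\end{equation*}
uniformly for $(x,y)\in \S^{d-1}\times \S^{d-1}$ with $|x-y|\leq 1$.

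I would then split along the threshold $c/\sqrt{n}$ provided by Proposition \ref{en}. In the near regime $|x-y|\leq c/\sqrt{n}$, estimate \eqref{en-proche} gives directly
\begin{equation*}
e_{d,n}(x,x)-e_{d,n}(x,y)\simeq n^{d/2}|x-y|^2,
\end{equation*}
hence $\delta_n(x,y)^2\simeq n^{1-d/2}|x-y|^2$, which is exactly $n^{-d/2}(\sqrt{n}|x-y|)^2$. Since $\sqrt{n}|x-y|\leq c$ in this regime, we have $\min(1,\sqrt{n}|x-y|)\simeq \sqrt{n}|x-y|$ up to constants depending only on $d$.

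In the far regime $c/\sqrt{n}\leq |x-y|\leq 1$, the uniform diagonal asymptotic \eqref{en-diag} gives $e_{d,n}(x,x)\simeq n^{d/2-1}$, while \eqref{en-loin} supplies $|e_{d,n}(x,y)|\leq \ep_d\, e_{d,n}(x,x)$ with $\ep_d<1$. Hence
\begin{equation*}
(1-\ep_d)\,e_{d,n}(x,x)\;\leq\; e_{d,n}(x,x)-e_{d,n}(x,y)\;\leq\; (1+\ep_d)\,e_{d,n}(x,x),
\end{equation*}
so that $\delta_n(x,y)^2\simeq n^{d/2-1}/n^{d-1}=n^{-d/2}$. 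As $\sqrt{n}|x-y|\geq c$ in this regime, $\min(1,\sqrt{n}|x-y|)\simeq 1$, which matches.

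Conceptually the argument is short; the real work is concentrated entirely in Proposition \ref{en}. The delicate point, and the step I expect to be the main obstacle, is that obtaining the correct quantitative behavior near the diagonal requires the quadratic lower bound \eqref{en-proche}, which as explained in the introduction cannot be extracted from the Bessel-type asymptotic \eqref{en-bessel} alone and needs the separate, more refined analysis carried out in Section \ref{proof-en}. The dichotomy between "far" and "near" then has to match up continuously at the scale $|x-y|\sim 1/\sqrt{n}$, which is automatic thanks to the choice of the same threshold $c/\sqrt{n}$ in both \eqref{en-loin} and \eqref{en-proche}.
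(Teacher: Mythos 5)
Your proposal is correct and follows essentially the same route as the paper: expand $\delta_n(x,y)^2$ into $\frac{2}{\dim(E_n)}\bigl(e_{d,n}(x,x)-e_{d,n}(x,y)\bigr)$ using the rotational invariance $e_{d,n}(x,x)=e_{d,n}(y,y)$, then invoke $\dim(E_n)\simeq n^{d-1}$ together with the two-sided estimates \eqref{en-loin}--\eqref{en-proche} (and \eqref{en-diag} for the size of the diagonal) from Proposition \ref{en}. The only difference is that you spell out the near/far dichotomy at scale $c/\sqrt{n}$ explicitly, which the paper compresses into one line.
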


\begin{proof} In the following, we use the equality $e_{d,n}(x,x)=e_{d,n}(y,y)$, which is a consequence of the rotational invariance of the harmonic oscillator $-\Delta+|x|^2$. We write 
\begin{eqnarray*}
\delta_n(x,y)^2& =&   \frac{1}{\dim(E_n)} \Big(\sum_{k=1}^{\dim(E_n)}  \varphi_{n,k}(x)^2+\varphi_{n,k}(y)^2-2\varphi_{n,k}(x)\varphi_{n,k}(y) \Big)       \\
& =& \frac{1}{\dim(E_n)}\left(e_{d,n}(x,x)+e_{d,n}(y,y)-2e_{d,n}(x,y) \right)\\
& =& \frac{2}{\dim(E_n)}\left( e_{d,n}(x,x)-e_{d,n}(x,y) \right) \simeq \frac{1}{n^{d/2}}\min(1,\sqrt{n}|x-y|)^2,
\end{eqnarray*}
where in the last line the equivalence we have used \eqref{dimEn} and the two-sided estimates of $e_{d,n}(x,x)-e_{d,n}(x,y)$ given by \eqref{en-loin} and \eqref{en-proche}. 
\end{proof}

\begin{rema}
  Note that \eqref{par} shows that the conclusion of Proposition \ref{del-e} does not hold on the whole compact set $\S^{d-1}\times \S^{d-1}$ since we have $\delta_n(x,-x)=0$ for $n$ even.
\end{rema}

We now recall the following almost sure convergence result.

\begin{lemm}\label{lem.as-cv-Sd}  
Let us fix $x_0\in \S^{d-1}$. The following assertions are equivalent:
\begin{enumerate}[(i)]
  \item the Gaussian random series $\displaystyle\sum_{n\in\mathbb{N}} f_n^{G,\omega}(x_0)$ almost surely converges;
  \item the series $\displaystyle\sum_{n\in\mathbb{N}}\frac{\|f_n\|_{L^2(\R^d)}^2 }{n^{d/2}}$ converges.
\end{enumerate}
\end{lemm}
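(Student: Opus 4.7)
The plan is to reduce the almost sure convergence of $\sum_n f_n^{G,\omega}(x_0)$ to the summability of the variances of its terms, exploiting that the partial sums form a series of \emph{independent} centered real Gaussian random variables. The key input is the diagonal estimate \eqref{en-diag} of Proposition \ref{en}, which pins down the variance asymptotic.

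First I would compute $\sigma_n^2:=\E[f_n^{G,\omega}(x_0)^2]$. Since the $(g_{n,k})$ are i.i.d.\ $\mathcal{N}(0,1)$, the variable
$$
f_n^{G,\omega}(x_0)=\frac{\|f_n\|_{L^2(\R^d)}}{\sqrt{\dim(E_n)}}\sum_{k=1}^{\dim(E_n)} g_{n,k}(\omega)\,\varphi_{n,k}(x_0)
$$
is centered Gaussian with
$$
\sigma_n^2=\frac{\|f_n\|_{L^2(\R^d)}^2}{\dim(E_n)}\sum_{k=1}^{\dim(E_n)}\varphi_{n,k}(x_0)^2=\frac{\|f_n\|_{L^2(\R^d)}^2}{\dim(E_n)}\,e_{d,n}(x_0,x_0).
$$
By the rotational invariance of $-\Delta+|x|^2$, this does not depend on the choice of $x_0\in \S^{d-1}$.

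Second, I would apply Proposition \ref{en}, specifically \eqref{en-diag}, which gives $e_{d,n}(x_0,x_0)\simeq n^{\frac{d}{2}-1}$, together with the dimension asymptotic \eqref{dimEn} stating $\dim(E_n)\simeq n^{d-1}$. These combine to yield
$$
\sigma_n^2 \simeq \frac{\|f_n\|_{L^2(\R^d)}^2}{n^{d/2}}.
$$

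Finally, since the blocks $(g_{n,k})_{1\leq k\leq \dim(E_n)}$ are independent for distinct $n$, the variables $(f_n^{G,\omega}(x_0))_{n\in \N}$ are independent centered real Gaussians. For such a series, the Kolmogorov two-series theorem (equivalently the It\^{o}-Nisio theorem in the Gaussian setting) asserts that almost sure convergence is equivalent to the convergence of $\sum_n \sigma_n^2$. Combined with the equivalence $\sigma_n^2\simeq \|f_n\|_{L^2(\R^d)}^2/n^{d/2}$, this yields the desired equivalence between (i) and (ii). No genuine obstacle arises in this proof; the nontrivial analytic content is entirely offloaded to the diagonal spectral estimate \eqref{en-diag}, whose proof is postponed to Section \ref{proof-en}.
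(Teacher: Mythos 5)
Your argument is correct, but it takes a different route from the paper: the paper does not prove this lemma internally at all, it simply invokes \cite[Theorem 4.1, (1) $\Leftrightarrow$ (4)]{imek-aif}, whereas you give a self-contained proof by computing $\sigma_n^2=\frac{\|f_n\|_{L^2(\R^d)}^2}{\dim(E_n)}e_{d,n}(x_0,x_0)$, feeding in the diagonal asymptotics \eqref{en-diag} together with \eqref{dimEn} to get $\sigma_n^2\simeq \|f_n\|_{L^2(\R^d)}^2 n^{-d/2}$ for large $n$, and then using the classical fact that a series of independent centered real Gaussians converges almost surely if and only if the variances are summable. This is legitimate and not circular, since Proposition \ref{en} is proved independently of the lemma; note also that only the tail matters, so the fact that \eqref{en-diag} is asymptotic (and that the $n=0$ term is degenerate) is harmless, and that the two-sided bound you need could even be taken from the older estimate \cite[Proposition 4.1]{imek-crusep} rather than from \eqref{en-diag}. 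The only small imprecision is the attribution: Kolmogorov's two-series theorem gives the sufficiency ($\sum\sigma_n^2<\infty$ implies a.s.\ convergence), while the necessity for Gaussians requires the three-series theorem (or Lévy/It\^o--Nisio plus the characteristic-function identity $\prod_n e^{-\sigma_n^2t^2/2}$ having a nonzero limit); this is standard and does not affect correctness. What your approach buys is a proof that stays inside the paper and makes transparent that the lemma rests only on the pointwise diagonal behavior of the spectral function on $\S^{d-1}$; what the paper's citation buys is brevity and access to the more general equivalences (with further modes of convergence) established in \cite{imek-aif}.
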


\begin{proof}
The proof can be found in \cite[Theorem 4.1, (1) $\Leftrightarrow$ (4)]{imek-aif}.
\end{proof}

The following result constructs a stationary Gaussian process that realizes a given Dudley pseudo-distance. 

\begin{prop}\label{cano} 
Let $(\lambda_n)_{n\in \N^\star}$ and $(c_n)_{n\in \N^\star}$ be non-negative sequences, and assume that $\sum\limits_{n\geq 1}c_n<+\infty$. 
There exists a stationary Gaussian process $(F^{G,\omega})$ on $\S^1$ that satisfies for any $(x,y)\in \S^1\times \S^1$:
\begin{equation*}
\E[|F^{G,\omega}(x)-F^{G,\omega}(y)|^2]^{\frac{1}{2}} \simeq \sqrt{\sum_{n\geq 1} c_n \min(1,\lambda_n |x-y|)^2}
\end{equation*}
in which $|x-y|$ is the Euclidean distance once $\S^1$ is embedded, as usual, in $\R^2$.

Moreover, for any given arc $\Gamma \subset \S^1$, the following three assertions are equivalent:
\begin{enumerate}[(i)]
\item the Gaussian process $(F^{G,\omega})$ admits a version which is sample-continuous on $\Gamma$;
\item the Gaussian process $(F^{G,\omega})$ admits a version which is sample-bounded on $\Gamma$;
\item the following condition is fulfilled 
\begin{equation}\label{ento}
\int_{0}^1  \sqrt{\sum\limits_{n\geq 1} c_n \min(1,\lambda_n t)^2}\frac{\mathrm{d}t}{t(-\log(t))^{\frac{1}{2}}} <+\infty.
\end{equation}
\end{enumerate}
\end{prop}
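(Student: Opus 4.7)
The plan is to construct $F^{G,\omega}$ as a random Fourier series on $\S^1 \simeq \R/2\pi\Z$ with frequencies grouped into blocks indexed by $n$, and then to invoke Fernique's theorem for stationary Gaussian processes on compact abelian groups to obtain the equivalence of (i), (ii), (iii). Without loss of generality I assume $\lambda_n \geq 1$ for every $n$ (the indices with smaller $\lambda_n$ contribute negligibly and can be handled separately). For each $n$ fix an integer $k_n \simeq \lambda_n$ and a block $\Lambda_n = \{k_n,\ldots,k_n+M_n\}$ of length $M_n \simeq k_n$. Let $(g_{n,k},h_{n,k})_{n,k}$ be i.i.d.\ $\mathcal{N}(0,1)$ random variables indexed by $n\in \N^\star$ and $k \in \Lambda_n$, and set
\begin{equation*}
F^{G,\omega}(\theta) = \sum_{n \geq 1} \sqrt{\frac{c_n}{M_n}} \sum_{k \in \Lambda_n} \bigl(g_{n,k}(\omega)\cos(k\theta) + h_{n,k}(\omega)\sin(k\theta)\bigr).
\end{equation*}
The total variance at each point equals $\sum_n c_n<+\infty$, so the series is well-defined in $L^2(\Omega)$ pointwise in $\theta$; stationarity follows from the rotational invariance of the two-dimensional Gaussian vectors $(g_{n,k},h_{n,k})$ under the change $\theta \mapsto \theta+s$.

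By independence and orthogonality, with $t := |\theta-\theta'|$ (so that $|x-y| = 2|\sin(t/2)|$),
\begin{equation*}
\E\bigl[|F^{G,\omega}(x) - F^{G,\omega}(y)|^2\bigr] = 4 \sum_{n \geq 1} \frac{c_n}{M_n} \sum_{k \in \Lambda_n} \sin^2(kt/2).
\end{equation*}
The heart of the argument, and the main technical obstacle, is to establish the two-sided estimate
\begin{equation*}
\frac{1}{M_n} \sum_{k \in \Lambda_n} \sin^2(ks) \simeq \min(1,\lambda_n s)^2
\end{equation*}
uniformly in $s \in (0,\pi/2]$ and $n \geq 1$. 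When $\lambda_n s \leq 1$ this is immediate from $\sin(ks) \simeq ks$ for every $k \in \Lambda_n$, which gives both bounds. When $\lambda_n s \geq 1$ one uses the closed form
\[
\sum_{k = k_n}^{k_n + M_n} \sin^2(ks) = \frac{M_n+1}{2} - \frac{\sin((M_n+1)s)\cos((2k_n+M_n)s)}{2\sin s},
\]
which identifies the average with $1/2$ up to an error of order $1/(M_n |\sin s|)$, absorbed by the main term because $M_n s \gtrsim M_n/\lambda_n \simeq 1$. A single frequency $k \simeq \lambda_n$ would fail because of the zeros of $\sin^2(k\,\cdot\,)$; the role of the block and of the scaling $M_n \simeq k_n$ is precisely to smear out these zeros so that the two-sided bound holds uniformly across the transition regime $\lambda_n s \simeq 1$.

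For the equivalence (i) $\Leftrightarrow$ (ii) $\Leftrightarrow$ (iii), Fernique's theorem applied to the stationary Gaussian process $F^{G,\omega}$ restricted to an arc $\Gamma \subset \S^1$ (see \cite[Chapter 12]{ledoux} or \cite{pisier1981}) yields the equivalence between sample continuity on $\Gamma$, sample boundedness on $\Gamma$ and finiteness of Dudley's entropy integral with respect to the pseudo-distance $\delta$. Introducing the non-decreasing profile $\sigma(u) := \sqrt{\sum_n c_n \min(1,\lambda_n u)^2}$, for which $\delta(x,y) \simeq \sigma(|x-y|)$ on $\Gamma$ by the previous paragraph, the $\epsilon$-covering number satisfies $N(\Gamma, \delta, \epsilon) \simeq 1/\sigma^{-1}(\epsilon)$, and the substitution $u = \sigma^{-1}(\epsilon)$ rewrites Dudley's integral $\int_0^\infty \sqrt{\log N(\Gamma,\delta,\epsilon)}\,d\epsilon$ as the Stieltjes integral $\int_0^1 \sqrt{-\log u}\,d\sigma(u)$. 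An integration by parts (the boundary terms vanishing because $\sigma(u) \to 0$ as $u \to 0^+$ under $\sum c_n < +\infty$) converts this expression into the condition \eqref{ento}, closing the chain.
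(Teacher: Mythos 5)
Your proposal is correct in substance but follows a genuinely different route from the paper. The paper constructs $F^{G,\omega}=\sum_{n}\sqrt{c_n}\,F_n^{G,\omega}$ from independent isotropic stationary processes on $\R^2$ whose covariances are $\E[F_n^{G,\omega}(x)F_n^{G,\omega}(y)]=J_0(\lambda_n|x-y|)$ (random plane waves with uniformly distributed direction), so that the two-sided bound on the Dudley pseudo-distance reduces to the single elementary estimate $1-J_0(t)\simeq\min(1,t)^2$; the conversion of the entropy condition into \eqref{ento} and the equivalence of (i), (ii), (iii) are then quoted from \cite[Propositions 6 and 16]{imek-jep} and the Dudley--Fernique theorems of \cite{ledoux}. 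You instead build a Marcus--Pisier-type random Fourier series on $\S^1$ with Gaussian coefficients spread over frequency blocks $\Lambda_n$ of length $M_n\simeq k_n\simeq\lambda_n$, replace the Bessel estimate by a Dirichlet-kernel computation for $\frac{1}{M_n}\sum_{k\in\Lambda_n}\sin^2(ks)$, and redo the covering-number/change-of-variables argument by hand. Both routes are legitimate: yours lives directly on $\S^1$ (stationarity is immediate from the rotation invariance of the Gaussian pairs) and is self-contained on the entropy side, while the paper's plane-wave construction avoids all block bookkeeping because the single covariance $J_0(\lambda_n|\cdot|)$ already interpolates the two regimes.

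A few steps need tightening, though none is fatal to the approach. First, in the regime $\lambda_n s\le 1$ the claim that $\sin(ks)\simeq ks$ for \emph{every} $k\in\Lambda_n$ fails for generic constants (at the top of the block $ks$ may reach $\pi$); the upper bound is fine for all $k$, but the lower bound should be run on a fixed fraction of the block where $ks$ stays below a constant strictly smaller than $\pi$. Second, in the regime $\lambda_n s\ge 1$ the error $\frac{1}{2(M_n+1)\sin s}$ is only $O(1)$ if you merely know $M_n s\gtrsim 1$, so it is not automatically ``absorbed'' by the main term $\tfrac12$; you must choose the comparability constant in $M_n\simeq k_n$ large enough (or again pass to a sub-block), which your setup allows but should state. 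Third, the boundary term $\sigma(u)\sqrt{-\log u}$ in the integration by parts does not vanish merely because $\sigma(u)\to 0$; the equivalence of finiteness still holds, e.g.\ by the monotonicity bound $\sigma(a)\sqrt{-\log a}\lesssim\int_a^{\sqrt a}\sigma(u)\,\frac{\mathrm{d}u}{u\sqrt{-\log u}}$, but this deserves a line. Finally, Fernique's necessity is a statement about stationary processes on the whole group; to apply it on an arc $\Gamma$ you should first upgrade sample-boundedness on $\Gamma$ to sample-boundedness on $\S^1$ by covering $\S^1$ with finitely many rotates of $\Gamma$ and using stationarity --- this is precisely the role played in the paper by the references \cite[Theorems 11.17 and 13.3]{ledoux} and \cite[Proposition 16]{imek-jep}.
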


\begin{proof}
The construction of $(F^{G,\omega})$ is done in the sequel. Let us admit for a moment 
the existence of $(F^{G,\omega})$. Then we refer to \cite[Proposition 6]{imek-jep} in which it is shown that \eqref{ento} is equivalent the entropic condition of the Gaussian process $(F^{G,\omega})$, from which the equivalence between (\textit{i}), (\textit{ii}) and (\textit{iii}) is derived from the classical theorem of Dudley and Fernique \cite[Theorem 11.17 and 13.3]{ledoux}. In the same spirit, we refer to the proof of \cite[Proposition 16]{imek-jep}.

Our task is therefore reduced to the construction of $F^{G,\omega}$. First, we need to define a canonical Gaussian process on $\R^2$. To this end, let $U:\Omega\rightarrow \S^1$ be a random unit vector with uniform distribution. We then can define an isotropic stationary Gaussian process $F_n^{G,\omega}:\Omega\times \R^2\rightarrow \R$ such that its covariance operator satisfies that for all $(x,y)\in \R^2\times \R^2$,  
\begin{equation}\label{cao}
\E[F_n^{G,\omega}(x) F_n^{G,\omega}(y)]= \E\left[\cos(\langle U(\omega),\lambda_n(x-y) \rangle ) \right].
\end{equation}
Such a Gaussian process is well-defined (see for instance \cite[page 332]{estrade} and \cite[page 16]{azais}). By symmetry of the random variable $U$, we have
\begin{equation}\label{sin}
\E\left[\sin(\langle U(\omega),\lambda_n(x-y) \rangle ) \right]=0.
\end{equation}
Thanks to the formula of the Fourier transform of the canonical Riemannian measure of $\S^1$, \eqref{cao} and \eqref{sin} read: 
\begin{equation*}
\E[F_n^{G,\omega}(x) F_n^{G,\omega}(y)]=  \E\left[ e^{i\langle U(\omega),\lambda_n(x-y) \rangle} \right]=J_0(\lambda_n |x-y|),
\end{equation*}
where $J_0:z\mapsto \displaystyle\frac{1}{\pi}\displaystyle\int_0^{\pi}\exp(iz\cos\tau)\,\mathrm{d}\tau$ is the Bessel function.

We may assume that the Gaussian random variables $(F_n^{G,\omega}(x))_{n\in \N}$ are independent for any fixed $x\in \R^2$. Define for any $x\in \S^1$ the following Gaussian process: 
\begin{equation*}
F^{G,\omega}(x)=\sum_{n\geq 1} \sqrt{c_n} F_n^{G,\omega}(x).
\end{equation*}
Due to \eqref{cao}, we get $\E[F_n^{G,\omega}(x)^2]=1$ and hence the previous series converges in $L^2(\Omega)$. In other words, the Gaussian process $F^{G,\omega}$ is well-defined. Stationarity is straightforward, because the independence of the $(F_n^{G,\omega})_{n\geq 1}$ yields  
\begin{equation*}
  \E[F^{G,\omega}(x)F^{G,\omega}(y)]=\sum_{n\geq 1} c_n \E[F_n^{G,\omega}(x) F_n^{G,\omega}(y)]
\end{equation*}
which depends only on $x-y$. Finally, the Dudley pseudo-distance of the Gaussian process $(F^{G,\omega}(x))_{x\in \S^1}$ satisfies
\begin{eqnarray}\nonumber
\E[|F^{G,\omega}(x)-F^{G,\omega}(y)|^2] & =& \sum_{n\geq 1}2 c_n \left( 1-  J_0(\lambda_n |x-y|)\right) \\ \label{equ}
 & \simeq &  \sum_{n\geq 1} c_n \min(1,\lambda_n |x-y|)^2. 
\end{eqnarray}

\begin{center}
\begin{minipage}{7 cm}
The equivalence \eqref{equ} can be proved thanks to the following asymptotics (coming from the power series at $t=0$ of $J_0(t)$):
$$1-J_0(t) \underset{t\to 0}{\sim} \frac{t^2}{4} $$
and is indeed comforted by a graphical representation: 
$$ \forall t>0\quad 0.2 \leq \frac{1-J_0(t)}{\min(1,t)^2}\leq 1.5                   $$ 
\end{minipage}
\hspace*{0.5 cm}
\begin{minipage}{5 cm}
\includegraphics[scale=0.3]{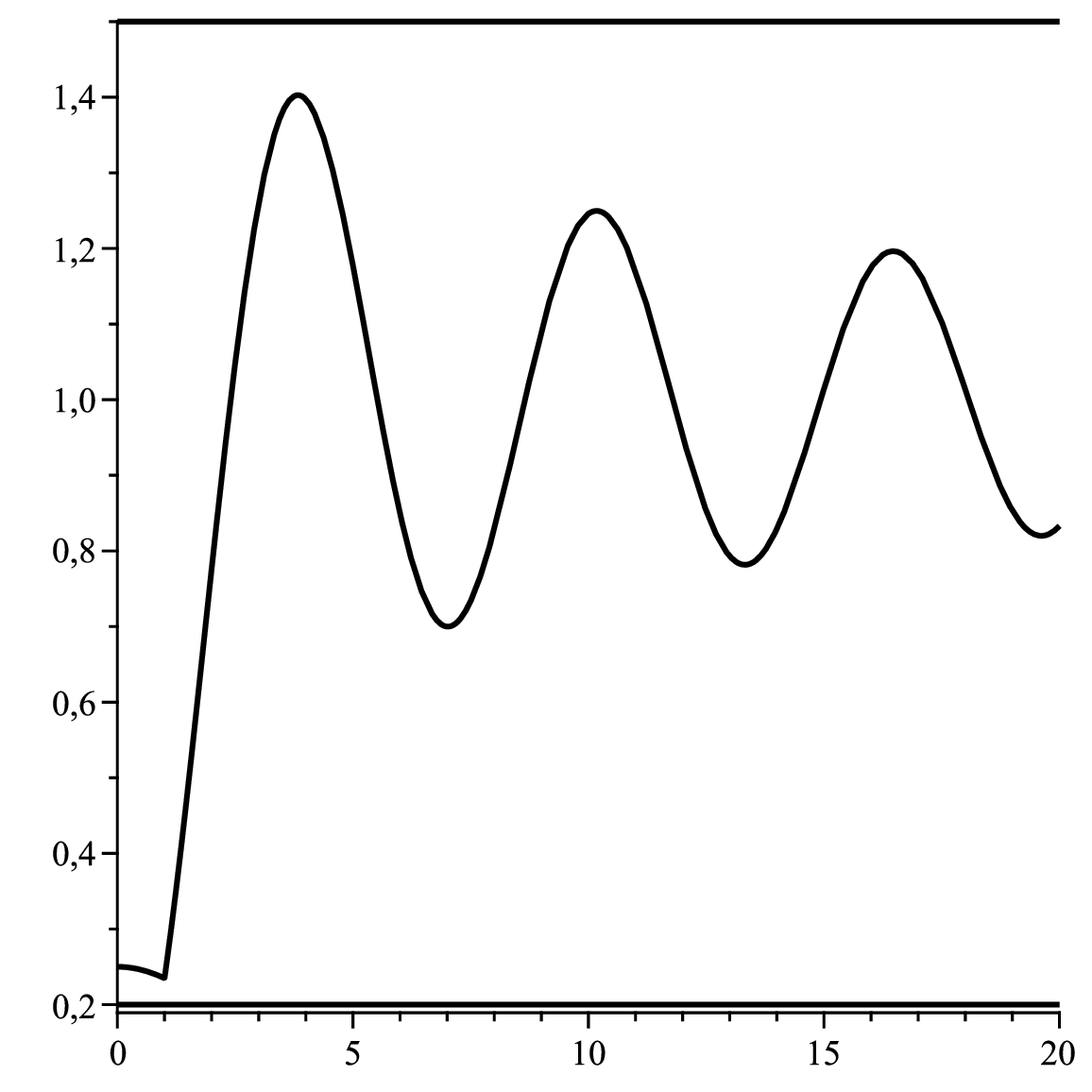}
\end{minipage}
\end{center}

\end{proof}

We may continue the proof of (iii) $\Longrightarrow$ (i) of \cref{teo}. Since we assume that (\textit{iii}) holds, we are in a position to apply Lemma \ref{lem.as-cv-Sd} and infer that $\sum \|f_n\|_{L^2(\mathbb{R}^d)}^2n^{-\frac{d}{2}}$ is a convergent series. 

 We consider the following curve on $\S^{d-1}$: 
\begin{equation*}
  \Gamma=\left\{(\cos(\alpha),\sin(\alpha),0,\dots,0)\in \R^d,\quad \alpha \in \left[0,\frac{\pi}{3}\right] \right\}.
\end{equation*}

Note that for any $(\gamma_1, \gamma_2)\in \Gamma\times \Gamma$ we have $|\gamma_1-\gamma_2|\leq 1$. 
Using \eqref{dedel} and Proposition \ref{del-e}, we see that the Dudley pseudo-distance $\delta$ of the Gaussian process $\left(f^{G,\omega}(\gamma) \right)_{\gamma \in \Gamma} $ satisfies for any $(\gamma_1,\gamma_2)\in \Gamma\times \Gamma$:

\begin{eqnarray*}
  \delta( \gamma_1,\gamma_2) = \E\left[ |f^{G,\omega}(\gamma_1)-f^{G,\omega}(\gamma_2)|^2\right]^{\frac{1}{2}} \simeq \sqrt{\sum_{n\geq 1} \frac{\|f_n\|_{L^2(\R^d)}^2}{n^{\frac{d}{2}}} \min(1,\sqrt{n}|\gamma_1-\gamma_2|)^2}.
\end{eqnarray*}
We then apply Proposition \ref{cano} with 
\begin{equation*}
c_n=\|f_n\|_{L^2(\R^d)}^2 n^{\frac{-d}{2}}\qquad \mbox{and}\qquad \lambda_n=\sqrt{n}
\end{equation*}
and we consider the associated Gaussian process $(F^{G,\omega})$ on $\S^1$ given by this result. 
We arrive at the equivalence 
\begin{equation*}
\E\left[|f^{G,\omega}(\gamma_1)-f^{G,\omega}(\gamma_2)|^2\right]^{\frac{1}{2}}\simeq
\E\left[|F^{G,\omega}(\gamma_1)-F^{G,\omega}(\gamma_2)|^2 \right]^{\frac{1}{2}}.
\end{equation*}
An application of the classical Gaussian comparison theorems of Slepian type ensures that the Gaussian process $(F^{G,\omega}(\gamma))_{\gamma \in \Gamma}$
admits a sample-bounded version (see \cite[Corollary 3.14]{ledoux}), because it is the case of the process $(f^{G,\omega}(\gamma))_{\gamma \in \Gamma}$. 

From Proposition \ref{cano}~ (\textit{iii}) we infer that, by setting $\theta=\frac{1}{2}$, the following holds 
\begin{equation*}
\int_{0}^1 \sqrt{\sum_{n\geq 1} \frac{\|f_n\|^2}{n^{\frac{d}{2}}} \min( 1, n^\theta t)^2} \frac{\mathrm{d}t}{t(-\log(t))^{\frac{1}{2}}}<+\infty, 
\end{equation*}
and the expected conclusion, namely assertion i) of Theorem \ref{teo}, is now a consequence of the following result.  

\begin{prop}\label{ups-the} 
  Let us consider $\theta\in (0,+\infty)$ and a nonnegative sequence $(c_n)_{n\geq 1}$ satisfying $\displaystyle\sum_{n\geq 1} c_n<\infty$. We introduce the function 
  \begin{equation*}
     \Upsilon_{\theta}(t)=\sqrt{\sum_{n\geq 1}  c_n \min(1,n^\theta t)^2} \quad \text{ for all } t\geq 0.
  \end{equation*}
  Then the following equivalence holds true
  \begin{equation}\label{szthe}
  \int_{0}^1 \Upsilon_\theta(t) \frac{\mathrm{d}t}{t(-\log(t))^{\frac{1}{2}}}  \simeq 
  \sum_{p\geq 1} 
  \frac{1}{p\sqrt{\log(p+1)}} \sqrt{\sum_{n\geq p } c_n}, 
  \end{equation}
  with constants that may depend on $\theta$ but the finiteness of \eqref{szthe} does not depend on $\theta$. 
  \end{prop}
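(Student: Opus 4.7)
The plan is to reduce the integral to a series through the substitution $t = p^{-\theta}$, and then to prove the two inequalities in \eqref{szthe} by decomposing $\Upsilon_\theta(p^{-\theta})^2$ into a super-diagonal piece (where the minimum equals $1$) and a sub-diagonal piece (where it equals $(n/p)^\theta$). Setting $S_p := \sum_{n \geq p} c_n$ (finite and decreasing by assumption), the substitution gives $\mathrm{d}t/t = -\theta\, \mathrm{d}p/p$ and $-\log t = \theta \log p$, so that
\begin{equation*}
\int_{0}^{1} \Upsilon_\theta(t)\, \frac{\mathrm{d}t}{t(-\log t)^{1/2}} = \sqrt{\theta} \int_{1}^{+\infty} \Upsilon_\theta(p^{-\theta})\, \frac{\mathrm{d}p}{p \sqrt{\log p}},
\end{equation*}
with the integrand satisfying $\Upsilon_\theta(p^{-\theta})^2 = S_{\lceil p \rceil} + \sum_{n \leq \lfloor p \rfloor} c_n (n/p)^{2\theta}$.

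For the lower bound I would restrict $p$ to $[p_0, p_0+1]$ for $p_0 \in \N^\star$ and keep only the terms with $n \geq p_0$. The term $n = p_0$ contributes $(p_0/p)^{2\theta} \geq (p_0/(p_0+1))^{2\theta} \geq 2^{-2\theta}$, and the remaining terms $n \geq p_0+1 \geq \lceil p \rceil$ each contribute $1$, giving $\Upsilon_\theta(p^{-\theta})^2 \geq 2^{-2\theta} S_{p_0}$ throughout $[p_0, p_0+1]$. Integrating against $\mathrm{d}p/(p\sqrt{\log p})$ and using the elementary comparison $\int_{p_0}^{p_0+1} \mathrm{d}p/(p\sqrt{\log p}) \simeq 1/(p_0 \sqrt{\log(p_0+1)})$ for $p_0 \geq 2$ (the case $p_0 = 1$ being handled separately by the explicit value $2\sqrt{\log 2}$, which matches the $p = 1$ term of the series) yields, up to a $\theta$-dependent constant, the right-hand side of \eqref{szthe}.

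For the upper bound I would split $\Upsilon_\theta(p^{-\theta}) \leq \sqrt{S_{\lceil p \rceil}} + \bigl(\sum_{n \leq \lfloor p \rfloor} c_n (n/p)^{2\theta}\bigr)^{1/2}$ via $\sqrt{a+b} \leq \sqrt{a} + \sqrt{b}$. The first piece integrates to a quantity dominated by the right-hand side of \eqref{szthe} by the same step-function comparison used in the lower bound. The second piece is the heart of the argument; I would decompose the indices $n \leq \lfloor p \rfloor$ dyadically into $\{p/2^{k+1} < n \leq p/2^k\}$ for $k \geq 0$, use $(n/p)^{2\theta} \leq 2^{-2\theta k}$ together with the subadditivity $\sqrt{\sum_k a_k} \leq \sum_k \sqrt{a_k}$ (valid for $a_k \geq 0$) to get
\begin{equation*}
\left(\sum_{n \leq \lfloor p \rfloor} c_n (n/p)^{2\theta}\right)^{1/2} \leq \sum_{k \geq 0} 2^{-\theta k}\, \sqrt{S_{\lceil p/2^{k+1} \rceil}}.
\end{equation*}
For each $k$, the change of variables $q = p/2^{k+1}$ (so $\log p = \log q + (k+1)\log 2$) splits the integral over $p$ into a piece on $q \geq 1$ bounded by the right-hand side of \eqref{szthe} (using $\sqrt{\log p} \geq \sqrt{\log q}$) plus a short-tail piece on $q \in [2^{-k-1}, 1]$ which evaluates explicitly to $2\sqrt{(k+1)\log 2}\,\sqrt{S_1}$ via the substitution $u = \log q + (k+1)\log 2$. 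Summing in $k$ against the weight $2^{-\theta k}$ produces a convergent $\theta$-dependent constant multiplying $\sqrt{S_1}$, itself dominated by the $p = 1$ term $\sqrt{S_1}/\sqrt{\log 2}$ of the right-hand side.

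The main obstacle is this upper-bound treatment of the sub-diagonal piece: one must absorb the growing weight $n^{2\theta}$ against $p^{-2\theta}$ in a way compatible with the logarithmic factor $1/\sqrt{\log p}$, and the dyadic decomposition together with square-root subadditivity is precisely the mechanism that achieves this at every scale with only a $\theta$-dependent loss. Once this estimate is in hand, the $\theta$-independence of the finiteness of the left-hand side of \eqref{szthe} is immediate because the right-hand side does not involve $\theta$ at all.
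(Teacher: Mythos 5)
Your proof is correct, but it follows a genuinely different route from the paper. The paper discretizes the integral over the intervals $[1/(p+1),1/p]$, so that the transition in $\min(1,n^{\theta}t)$ occurs at $n\sim p^{1/\theta}$, and then splits $\Upsilon_\theta(1/p)^2$ into $U_p(\theta)/p^2+V_p(\theta)$; this forces two nontrivial comparisons: a Cauchy--Schwarz estimate (whose conclusion is delegated to the argument of \cite[page 782]{imek-jep}) to absorb the $\sqrt{U_p(\theta)}$ contribution, and a Cauchy-condensation rescaling trick based on the identity $V_{2^{pT^2}}(T)=V_{2^p}(1/T)$ to show that the $\sqrt{V_p(\theta)}$ series is equivalent to the $\theta=1$ one. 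You instead perform the substitution $t=p^{-\theta}$ at the outset, which normalizes the transition to $n\sim p$ for every $\theta$ and places all the $\theta$-dependence in the factor $\sqrt{\theta}$ and in the weights $(n/p)^{2\theta}$; the super-diagonal part then matches the Salem--Zygmund series by a direct step-function comparison, and the sub-diagonal part is controlled by the dyadic decomposition $\{p/2^{k+1}<n\le p/2^k\}$ together with $\sqrt{\sum_k a_k}\le\sum_k\sqrt{a_k}$ and a per-scale change of variables, the tail pieces costing only $2\sqrt{(k+1)\log 2}\,\sqrt{S_1}$ and being summable against $2^{-\theta k}$. Your version is self-contained (no appeal to the external computation of \cite{imek-jep} and no condensation/rescaling step, since $\theta$-independence of finiteness is immediate once the right-hand side, which is $\theta$-free, has been reached), at the price of $\theta$-dependent constants such as $2^{-2\theta}$ and $\sum_k 2^{-\theta k}\sqrt{k+1}$, which the statement explicitly allows; the paper's route, in exchange, stays within the dyadic-series formalism used elsewhere in the paper. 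Two harmless points to tidy up: your identity $\Upsilon_\theta(p^{-\theta})^2=S_{\lceil p\rceil}+\sum_{n\le\lfloor p\rfloor}c_n(n/p)^{2\theta}$ double-counts the term $n=p$ at integer values of $p$ (a null set for the integral, and in the correct direction for your upper bound), and the interchange of the sum over $k$ with the integral in $p$ should be justified by Tonelli, all terms being nonnegative.
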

  
The proof is essentially a completion of a computation made in \cite{imek-jep}.

\begin{proof} 
  We follow the proof of \cite[pages 781-782]{imek-jep}, the slight difference is in \eqref{sz22}. Let us first decompose: 
  \begin{align*}
  \int_{0}^1 \frac{\Upsilon_\theta(t)}{t (-\log(t))^{\frac{1}{2}}} \mathrm{d} t &= 
  \sum_{p\geq 1} \int_{1/(p+1)}^{1/p} \frac{\Upsilon_\theta(t)}{t (-\log(t))^{\frac{1}{2}}} \mathrm{d} t\\
  &\simeq \sum_{p\geq 1} \sqrt{\sum_{n\geq 1 }c_n \min \Big( 1,\frac{n^{2\theta}}{p^2} \Big)} \frac{1}{(\log(p+1))^{\frac{1}{2}}p}
  \end{align*}
in which we have controlled the non-decreasing function $\Upsilon_\theta$ for instance with the equivalence
\begin{equation*}
\int_{1/(p+1)}^{1/p} \frac{\mathrm{d}{t}}{t\sqrt{-\log(t)}} = 2\sqrt{\log(p+1)}-2\sqrt{\log(p)}\simeq \frac{1}{(\log(p+1))^{\frac{1}{2}} p}
\end{equation*}
and the following inequalities
\begin{equation*}
\frac{1}{2}\Upsilon_\theta\Big( \frac{1}{p}\Big)\leq  \Upsilon_\theta\Big( \frac{1}{2p}\Big)\leq \Upsilon_\theta\Big( \frac{1}{p+1}\Big)\leq \Upsilon_\theta(t) \leq \Upsilon_\theta\Big( \frac{1}{p}\Big)\qquad \forall t \in \Big[\frac{1}{p+1},\frac{1}{p} \Big].
\end{equation*}
  For any integer $p\geq 1$, we write
  \begin{equation}\label{Vpth}
    U_{p}(\theta)=\sum_{1\leq n < \lfloor p^{1/\theta}\rfloor}n^{2\theta} c_n \quad \text{and}\quad V_{p}(\theta)=\sum\limits_{n \geq  \lfloor p^{1/\theta}\rfloor}c_n.
  \end{equation}
 We remark that first integer $n^\star=\lfloor p^{\frac{1}{\theta}}\rfloor$ in $V_p(\theta)$ satisfies $p\leq (n^\star+1)^\theta$ and thus
\begin{equation*}
\frac{1}{2^{2\theta}}\leq  \frac{(n^\star)^{2\theta}}{p^2}\leq 1 
\end{equation*} 
  so that we can write (by separating the three cases $n<n^\star,n=n^\star$ and $n>n^\star$):
  \begin{eqnarray}\nonumber
 \sum\limits_{n\geq 1} c_n \min \Big(1,\frac{n^{2\theta}}{p^2} \Big) & \simeq & \frac{U_p(\theta)}{p^2}+V_p(\theta)\\\nonumber
  \int_{0}^1 \frac{\Upsilon_\theta(t)}{t(-\log(t))^{\frac{1}{2}}} \mathrm{d} t &\simeq &
  \sum_{p\geq 1} \sqrt{\frac{1}{p^2}U_{p}(\theta)+V_{p}(\theta)} \frac{1}{p\sqrt{\log(p+1)}} \\ \label{sz0}
   & \simeq & \sum_{p\geq 1} \frac{\sqrt{U_{p}(\theta)}}{p^2 \sqrt{\log(p+1)}}+\sum_{p\geq 1} \frac{\sqrt{V_{p}(\theta)}}{p\sqrt{\log(p+1)}}.
  \end{eqnarray}
  In contrast to \cite[line (75)]{imek-jep}, we have to show
  \begin{equation}\label{sz1}
  \sum_{p\geq 1} \frac{\sqrt{U_{p}(\theta)}}{p^2 \sqrt{\log(p+1)}}\lesssim \sum_{p\geq 1} \frac{\sqrt{V_p(1)}}{p\sqrt{\log(p+1)}}
  \end{equation}
  and also the independence with respect to $\theta$:
  \begin{equation}\label{sz22}
  \sum_{p\geq 1} \frac{\sqrt{V_{p}(\theta)}}{p\sqrt{\log(p+1)}}\simeq 
  \sum_{p\geq 1} \frac{\sqrt{V_p(1)}}{p\sqrt{\log(p+1)}}
  \end{equation}
  Clearly \eqref{sz0}, \eqref{sz1} and \eqref{sz22} together imply \eqref{szthe} since the $V_p(1)$ part is exactly the Salem-Zygmund term appearing in the right-hand side of \eqref{szthe}.
  
  \textbf{Proof of \eqref{sz1}.}
  By writing $\frac{\sqrt{U_{p}(\theta)}}{p^2 \sqrt{\log(p+1)}}=\frac{1}{\sqrt{p}\log(p+1)}\times \frac{\sqrt{U_{p}(\theta)} \sqrt{\log(p+1)}}{p^{3/2}}$, we take advantage of the argument of \cite[page 782]{imek-jep} by using the Cauchy-Schwarz inequality to get
  \begin{eqnarray*}
  \sum_{p\geq 1} \frac{\sqrt{U_{p}(\theta)}}{p^2 \sqrt{\log(p+1)}} & \leq & C \Big(\sum_{p\geq 1} \frac{U_{p}(\theta) \log(p+1)}{p^3}       \Big)^{\frac{1}{2}}\\
  & \leq & C\Big(    \sum_{n\geq 1} c_n n^{2\theta} \sum_{p\geq \lceil n^\theta \rceil } \frac{\log(p+1)}{p^3}          \Big)^{\frac{1}{2}}
  \end{eqnarray*}
which is incidentally enough for our purpose, as one ultimately obtains 
  \begin{equation*}
    \sum_{p\geq 1} \frac{\sqrt{U_{p}(\theta)}}{p^2 \sqrt{\log(p+1)}}  \leq C \Big(  \sum_{n\geq 1}  c_n \frac{n^{2\theta} \log(\lceil n^\theta \rceil+1)}{\lceil n^\theta \rceil^2}        \Big)^{\frac{1}{2}} \simeq 
  \Big(  \sum_{n\geq 1}  c_n \log(n+1)  \Big)^{\frac{1}{2}}, 
  \end{equation*}
  giving fortunately the same term as in \cite[page 782]{imek-jep} whose argument thus may be continued to get \eqref{sz1}.
 
  \textbf{Proof of \eqref{sz22} for $\theta\in (0,1)$.} It is important to keep in mind that $\theta\mapsto V_{p}(\theta)$ is a non-decreasing function. 
  
  From this observation, it follows that $V_{p}(\theta)\leq V_p(1)$ for any $\theta \in (0,1)$, which readily shows the upper bound of \eqref{sz22} in the case $\theta \in (0,1)$. 
  
  The key observation which allows to deal with the remaining cases, and whose proof is postponed, is that for any integer $T\geq 1$ the following holds: 
  \begin{equation}\label{VPT}
  \sum_{p\geq 1} \frac{\sqrt{V_{p}(T)}}{p\sqrt{\log(p+1)}} \lesssim 
  \sum_{p\geq 1} \frac{\sqrt{V_{p}(1/T)}}{p\sqrt{\log(p+1)}}.
  \end{equation} 
  
  Let us now show the lower bound in \eqref{sz22} in the case $\theta \in (0,1)$. 
  We first choose an integer $T$ which satisfies $\frac{1}{T}< \theta$. Therefore, we obtain $V_p(1)\leq V_{p}(T)$ and $V_{p}(1/T)\leq V_{p}(\theta)$, so that \eqref{sz22} is a consequence of \eqref{VPT}. 
  
  \textbf{Proof of \eqref{sz22} for $\theta\geq 1$.} As above, the lower-bound is a direct consequence of the inequality $V_p(1)\leq V_p(\theta)$. It remains to address the upper bound of \eqref{sz22}. We consider an integer $T$ satisfying $T>\theta$ and therefore we have $V_{p}(\theta)\leq V_{p}(T)$ and $V_{p}(1/T)\leq V_p(1)$, and we conclude as above thanks to \eqref{VPT}.
  
\textbf{Proof of \eqref{VPT}} We observe that since the sequences inside the two series in \eqref{VPT} are non-increasing with respect to $p$, the Cauchy condensation test shows that \eqref{VPT} is indeed equivalent to 
  \begin{equation*}
  \sum_{p\geq 0} \frac{\sqrt{V_{2^p}(T)}}{\sqrt{p+1}} \lesssim 
  \sum_{p\geq 0} \frac{\sqrt{V_{2^p}(1/T)}}{\sqrt{p+1}}.
  \end{equation*}
  
  We now split the left-hand side as a sum modulo $T^2$ in the following way: 
  \begin{equation*}
  \sum_{p\geq 0} \frac{\sqrt{V_{2^p}(T)}}{\sqrt{p+1}}=\sum_{k=0}^{T^2-1} 
  \sum_{p\geq 0} \frac{\sqrt{V_{2^{pT^2+k}}(T)}}{\sqrt{pT^2+k+1}}
  \leq 
  \sum_{p\geq 0}\sqrt{V_{2^{pT^2}}(T)} \Big(\sum_{k=0}^{T^2-1} \frac{1}{\sqrt{pT^2+k+1}} \Big)
  \end{equation*}
  which can be simplified (absorbing a factor $T$ in the constant): 
  \begin{equation*}
  \sum_{p\geq 0} \frac{\sqrt{V_{2^p}(T)}}{\sqrt{p+1}}\lesssim \sum_{p\geq 0}\frac{\sqrt{V_{2^{pT^2}}(T)} }{\sqrt{p+1}}.
  \end{equation*}
  The conclusion follows from the equality $V_{2^{pT^2}}(T)=V_{2^{p}}(1/T)$ that can be seen directly from the definition \eqref{Vpth}. 
\end{proof}

\section{Proof of Proposition \ref{en}}\label{proof-en}

\subsection{Preliminaries}
We give here some results needed in our proof of Proposition \ref{en}.
Our starting point is the Mehler formula, which holds for any $d\geq 1$, $(x,y)\in \R^d\times \R^d$ and any $t>0$, 
\begin{equation}\label{mehler0}
\sum_{n=0}^{+\infty} e^{-t(2n+d)}e_{d,n}(x,y)=\frac{1}{(2\pi \sinh(2t))^{\frac{d}{2}}}\exp\left(\frac{-\tanh(t)}{4}|x+y|^2-\frac{|x-y|^2}{4\tanh(t)} \right).
\end{equation}
The exponent in the right-hand side is also often written as follows
\begin{equation}\label{mehler}
\sum_{n=0}^{+\infty} e^{-t(2n+d)}e_{d,n}(x,y)=\frac{1}{(2\pi \sinh(2t))^{\frac{d}{2}}}\exp\left( -\frac{|x|^2+|y|^2}{2\tanh(2t)}+\frac{\langle x,y\rangle}{\sinh(2t)} \right).
\end{equation}
By tensorization, the previous formula is a consequence of the case $d=1$ (see \cite[page 380]{szeg}).

A crucial remark is that the exponent in the Mehler formula merely depends on $|x|^2+|y|^2$ and $\langle x,y\rangle$. Next, we change variables by introducing
\begin{equation}\label{hat}
\hat{x}=\frac{|x+y|+|x-y|}{2} \quad \mbox{and}\quad\hat{y}=\frac{|x+y|-|x-y|}{2}
\end{equation}
for which it is easy to check that $(x,y)\mapsto (\hat{x},\hat{y})$ is a non-linear isometry:
\begin{equation}\label{hat2}
\hat{x}^2+\hat{y}^2=|x|^2+|y|^2, \qquad  \hat{x}\hat{y}=\langle x,y\rangle, \qquad \hat{x}-\hat{y}=|x-y|.
\end{equation}
With these new variables, we may state the following result which expresses $e_{d,n}(x,y)$ as a Cauchy product. 

\begin{prop}\label{prop.edn} 
Let us consider $d\geq 2$, $n\in \N$, and $(x,y)\in \R^d\times \R^d$. Then the following equality holds: 
\begin{equation}\label{edn}
e_{d,n}(x,y)=\frac{1}{\pi^{\frac{d-1}{2}}}\sum_{\substack{(k,\ell)\in \N\times \N \\ k+2\ell=n}} h_k(\hat{x})h_k(\hat{y}) B_d(\ell),
\end{equation}
where $B_d(\ell)$ satisfies the following asymptotics for $\ell\gg 1$:
\begin{equation}\label{Adell}
B_d(\ell)=\frac{\Gamma\left(\frac{d-1}{2}+\ell \right)}{\ell! \Gamma \left(\frac{d-1}{2} \right)}=\frac{1}{\Gamma(\frac{d-1}{2})} \ell^{\frac{d-3}{2}}+\mathcal{O}\left(\ell^{\frac{d-5}{2}}\right).
\end{equation}
\end{prop}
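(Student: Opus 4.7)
The plan is to divide the $d$-dimensional Mehler formula \eqref{mehler0} by a suitable $1$-dimensional Mehler formula evaluated at the transformed variables $(\hat x,\hat y)$. The crucial observation provided by \eqref{hat2} is that
\begin{equation*}
(\hat x+\hat y)^{2}=|x+y|^{2}\quad\text{and}\quad (\hat x-\hat y)^{2}=|x-y|^{2},
\end{equation*}
so that the exponential factor appearing in the $1$-dimensional Mehler formula for $(\hat x,\hat y)$ matches exactly the one appearing in the $d$-dimensional Mehler formula for $(x,y)$. Once the two generating series share the same exponential part, their ratio reduces to an elementary function of $t$, which we expand as a power series in $u=e^{-2t}$ via the binomial series. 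Identifying coefficients in $u^{n}$ will produce \eqref{edn}.

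\textbf{Main steps.} First, I write \eqref{mehler0} in the form
\begin{equation*}
\sum_{n\geq 0}u^{n}e_{d,n}(x,y)=\frac{e^{(d-1)t}}{(2\pi\sinh(2t))^{\frac{d}{2}}}e^{dt}\exp\!\Big(-\tfrac{\tanh(t)}{4}|x+y|^{2}-\tfrac{|x-y|^{2}}{4\tanh(t)}\Big),
\end{equation*}
after multiplying by $e^{dt}$ and setting $u=e^{-2t}$. The same manipulation in dimension $1$ applied to $(\hat x,\hat y)$ reads, using \eqref{fs} and \eqref{hat2},
\begin{equation*}
\sum_{n\geq 0}u^{n}h_{n}(\hat x)h_{n}(\hat y)=\frac{e^{t}}{\sqrt{2\pi\sinh(2t)}}\exp\!\Big(-\tfrac{\tanh(t)}{4}|x+y|^{2}-\tfrac{|x-y|^{2}}{4\tanh(t)}\Big).
\end{equation*}
Dividing these two identities cancels the exponential factor and yields
\begin{equation*}
\sum_{n\geq 0}u^{n}e_{d,n}(x,y)=\frac{e^{(d-1)t}}{(2\pi\sinh(2t))^{\frac{d-1}{2}}}\sum_{n\geq 0}u^{n}h_{n}(\hat x)h_{n}(\hat y).
\end{equation*}
Using $2\sinh(2t)=e^{2t}(1-u^{2})$, the prefactor simplifies to $\pi^{-(d-1)/2}(1-u^{2})^{-(d-1)/2}$.

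\textbf{Expansion and identification.} I expand the prefactor via the binomial series
\begin{equation*}
\frac{1}{(1-u^{2})^{\frac{d-1}{2}}}=\sum_{\ell\geq 0}\frac{\Gamma\!\left(\frac{d-1}{2}+\ell\right)}{\ell!\,\Gamma\!\left(\frac{d-1}{2}\right)}u^{2\ell}=\sum_{\ell\geq 0}B_{d}(\ell)\,u^{2\ell},
\end{equation*}
so that the right-hand side becomes a Cauchy product of the form $\pi^{-(d-1)/2}\sum_{n\geq 0}u^{n}\sum_{k+2\ell=n}h_{k}(\hat x)h_{k}(\hat y)B_{d}(\ell)$. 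Identifying coefficients of $u^{n}$ on both sides (both series are convergent for $|u|<1$ as power series) gives \eqref{edn}.

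\textbf{Asymptotics of $B_{d}(\ell)$.} The identity $B_{d}(\ell)=\Gamma(\frac{d-1}{2}+\ell)/(\ell!\,\Gamma(\frac{d-1}{2}))$ is immediate from the computation above. Its asymptotic expansion follows from the standard Stirling-type ratio $\Gamma(\alpha+\ell)/\Gamma(\ell+1)=\ell^{\alpha-1}(1+O(\ell^{-1}))$ as $\ell\to\infty$, applied with $\alpha=\frac{d-1}{2}$, which yields \eqref{Adell}.

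\textbf{Anticipated obstacle.} There is no serious analytic obstacle: the entire proof hinges on the algebraic observation that the $(\hat x,\hat y)$ change of variables is a (non-linear) isometry, together with the fact that the Mehler kernel depends on $(x,y)$ only through $|x+y|$ and $|x-y|$. The only subtlety is to keep track of the factors $e^{dt}$ versus $e^{t}$ and of the correct power of $2\pi\sinh(2t)$ when taking the ratio, and to justify the identification of coefficients via the uniform convergence of the Mehler series for $0<t$ (equivalently $|u|<1$), which is standard.
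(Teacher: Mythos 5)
Your proof is correct and follows essentially the same route as the paper's: rewrite the $d$-dimensional Mehler formula \eqref{mehler0} through the isometric change of variables \eqref{hat2} so that it matches the one-dimensional Mehler kernel at $(\hat x,\hat y)$, divide out the common exponential, expand the remaining factor $(1-u^2)^{-(d-1)/2}$ by the binomial series, identify coefficients in the Cauchy product, and get \eqref{Adell} from the Stirling-type ratio $\Gamma(\ell+\tfrac{d-1}{2})/\Gamma(\ell+1)\sim \ell^{\frac{d-3}{2}}$ (the paper uses the $\langle x,y\rangle$ form \eqref{mehler} and the variable $z=e^{-t}$ instead of $u=e^{-2t}$, which is purely cosmetic). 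The only blemish is a typo in your first display, where the prefactor should be $e^{dt}\,(2\pi\sinh(2t))^{-d/2}$ rather than $e^{(d-1)t}(2\pi\sinh(2t))^{-d/2}e^{dt}$; the ratio identity you derive afterwards, and hence \eqref{edn}, is correct.
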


\begin{proof} 
We start by observing that thanks to \eqref{hat2} we have 
\begin{equation*}
  \exp\left(-\frac{|x|^2+|y|^2}{2\tanh(2t)}+\frac{\langle x,y\rangle}{\sinh(2t)}\right) = \exp\left(
   -\frac{\hat{x}^2+\hat{y}^2}{2\tanh(2t)}+\frac{\hat{x}\hat{y}}{\sinh(2t)} \right).
\end{equation*}
Since $\hat{x}$ and $\hat{y}$ are real, we observe that the Mehler formula \eqref{mehler} in dimension $d$ evaluated at $(x,y)$ can be rewritten using the Mehler formula in dimension $1$ evaluated at $(\hat{x}, \hat{y})$. We have indeed  
\begin{eqnarray*}
(2\pi \sinh(2t))^{\frac{d}{2}}\sum_{n=0}^{+\infty} e^{-t(2n+d)}e_{d,n}(x,y) & =&  (2\pi \sinh(2t))^{\frac{1}{2}}\sum_{k=0}^{+\infty} e^{-t(2k+1)}e_{1,k}(\hat{x},\hat{y})\\
& =& (2\pi \sinh(2t))^{\frac{1}{2}}\sum_{k=0}^{+\infty} e^{-t(2k+1)} h_k(\hat{x}) h_k(\hat{y}).
\end{eqnarray*}
We are now in a position to study the behavior as $t\rightarrow +\infty$. To this end we write $t=-\log(z)$, which corresponds to the limit $z\rightarrow 0^{+}$. Using the formula $-2\sinh(2\log (z))=z^{-2}(1-z^4)$, we get 
\begin{eqnarray*}
\sum_{n=0}^{+\infty} z^{2n+d} e_{d,n}(x,y) & =&  \frac{z^{d-1}}{\pi^{\frac{d-1}{2}} (1-z^4)^{\frac{d-1}{2}}}\sum_{k=0}^{+\infty}z^{2k+1} h_k(\hat{x})h_k(\hat{y}) \\
& = & \frac{1}{\pi^{\frac{d-1}{2}}}  (1-z^4)^{\frac{-(d-1)}{2}}     \sum_{k=0}^{+\infty} z^{2k+d} h_k(\hat{x})h_k(\hat{y}) \\
& =& \frac{1}{\pi^{\frac{d-1}{2}}}\Big( \sum_{\ell=0}^{+\infty} B_d(\ell)z^{4\ell} \Big)\Big( \sum_{k=0}^{+\infty} z^{2k+d} h_k(\hat{x})h_k(\hat{y})\Big),
\end{eqnarray*}
in which we have denoted by $B_d(\ell)=\comb{\frac{-(d-1)}{2}}{\ell}(-1)^\ell$. It follows that \eqref{edn} holds because 
\[
  B_d(\ell) = \frac{\left(\frac{d-1}{2}\right)\left(\frac{d-1}{2}+1\right)\dots \left(\frac{d-1}{2}+\ell-1\right)}{\ell!} = \frac{\Gamma\left( \frac{d-1}{2}+\ell\right)}{\ell!\Gamma\left(\frac{d-1}{2}  \right)}.
\]
In order to obtain \eqref{Adell}, one can use the well-known asymptotics as $t\rightarrow +\infty$:
\begin{equation*}
\log \Gamma(t)=t\log(t)-t-\frac{1}{2}\log(t)+\log\sqrt{2\pi}+\mathcal{O}\left( \frac{1}{t}\right)
\end{equation*}
so that for any real number $a$ we get for $t\gg 1$: 
\begin{equation*}
\log \Gamma(t+a)-\log \Gamma(t)=
a\log(t)+\mathcal{O}\left( \frac{1}{t}\right).
\end{equation*}
By subtracting the cases $a=\frac{d-1}{2}$ and $a=1$, we finally obtain

\begin{eqnarray*}
  B_d(\ell)& =&  \frac{1}{\Gamma\left(\frac{d-1}{2}  \right)}\exp\left( \log \Gamma \left( \ell+\frac{d-1}{2}\right)-\log \Gamma(\ell+1)  \right) \\
   &=& \frac{1}{\Gamma\left(\frac{d-1}{2}\right)} \exp\left(  \frac{d-3}{2} \log(\ell)+\mathcal{O}\left( \frac{1}{\ell}\right)\right)= \frac{\ell^{\frac{d-3}{2}}}{\Gamma\left(\frac{d-1}{2}  \right)}+\mathcal{O}\left(\ell^{\frac{d-5}{2}} \right).\qedhere
\end{eqnarray*}
\end{proof}
We now recall known uniform asymptotics of the Hermite functions $h_n$ on a compact interval (in the so-called allowed region) obtained by Muckenhoupt.

\begin{prop}\label{herm} 
As $n\rightarrow +\infty$, the following asymptotics hold uniformly for $s$ and $t$ each belonging to a same compact interval of $\R$,
\begin{eqnarray}\label{hn1}
  h_n(s)  &= & \frac{2^{1/4}}{\sqrt{\pi} n^{1/4}} \cos\left(s\sqrt{2n}-\frac{n\pi}{2}\right) +\mathcal{O} \left(\frac{1}{n^{3/4}} \right), \\ 
\label{hn3}
h_{n-1}(s)^2+h_{n}(s)^2 &=& \frac{\sqrt{2}}{\pi \sqrt{n}}+\mathcal{O}\left( \frac{1}{n}\right),
\end{eqnarray}
\begin{equation}
\label{hn2}
  h_n(s)h_n(t) =   \frac{1}{\pi\sqrt{2n}} \left[\cos\left((s-t)\sqrt{2n}\right)+(-1)^n\cos\left((s+t)\sqrt{2n}\right) \right]  +\mathcal{O}\left( \frac{1}{n}\right).\\
  \end{equation}
\end{prop}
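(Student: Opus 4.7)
The heart of the matter is the one--point expansion \eqref{hn1}; the other two statements are then essentially algebraic consequences. The proof of \eqref{hn1} is a classical uniform asymptotic in the ``allowed'' (oscillatory) region of the Hermite function, originally due to Muckenhoupt. The cleanest derivation proceeds from the Hermite ODE
\begin{equation*}
h_n''(s)+(2n+1-s^{2})h_{n}(s)=0,
\end{equation*}
via a WKB / Liouville--Green analysis away from the turning points $s=\pm\sqrt{2n+1}$: after rescaling one writes $h_n$ in Liouville normal form and controls the remainder by a Volterra iteration. For $s$ in a fixed compact interval the turning points are at distance $\simeq \sqrt{n}$, so a single trigonometric phase $\cos(s\sqrt{2n}-n\pi/2)$ (the sign $n\pi/2$ is fixed by the parity of $h_n$) describes $h_n(s)$ with error $\mathcal{O}(n^{-3/4})$. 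Alternatively, one can invoke the Plancherel--Rotach asymptotics or steepest descent applied to the integral representation $h_n(s)=\frac{(-1)^n}{\sqrt{n!2^n\sqrt{\pi}}}\,e^{s^2/2}\frac{d^n}{ds^n}e^{-s^2}$. This step will be the real technical content, and it is where the $n^{-3/4}$ error comes from.

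Granting \eqref{hn1}, I would deduce \eqref{hn2} by taking the product $h_n(s)h_n(t)$ and applying the product--to--sum identity. The main term is
\begin{equation*}
\frac{\sqrt{2}}{\sqrt{\pi}\, n^{1/4}}\cos\!\Big(s\sqrt{2n}-\tfrac{n\pi}{2}\Big)\cdot \frac{\sqrt{2}}{\sqrt{\pi}\, n^{1/4}}\cos\!\Big(t\sqrt{2n}-\tfrac{n\pi}{2}\Big)
=\frac{1}{\pi\sqrt{2n}}\Big[\cos((s-t)\sqrt{2n})+(-1)^{n}\cos((s+t)\sqrt{2n})\Big],
\end{equation*}
using $2\cos A\cos B=\cos(A-B)+\cos(A+B)$ together with $\cos(2\alpha-n\pi)=(-1)^n\cos(2\alpha)$. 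The cross and squared remainder terms contribute $\mathcal{O}(n^{-1/4})\cdot\mathcal{O}(n^{-3/4})+\mathcal{O}(n^{-3/2})=\mathcal{O}(n^{-1})$, uniformly in compact intervals. This yields \eqref{hn2}.

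Finally, \eqref{hn3} follows by setting $t=s$ in \eqref{hn2} and summing at levels $n$ and $n-1$. Specialising gives
\begin{equation*}
h_n(s)^{2}=\frac{1}{\pi\sqrt{2n}}\bigl[1+(-1)^{n}\cos(2s\sqrt{2n})\bigr]+\mathcal{O}(n^{-1}),
\end{equation*}
and similarly at index $n-1$, where the sign flips to $(-1)^{n-1}$. Adding the two expressions, the constant contributions combine to $\frac{2}{\pi\sqrt{2n}}=\frac{\sqrt{2}}{\pi\sqrt{n}}$ modulo $\mathcal{O}(n^{-1})$, while the oscillatory terms contribute a remainder of size $\frac{1}{\sqrt{n}}\bigl|\cos(2s\sqrt{2n})-\cos(2s\sqrt{2(n-1)})\bigr|$. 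Since $\sqrt{2n}-\sqrt{2(n-1)}=\mathcal{O}(n^{-1/2})$ and $s$ is bounded, this difference is $\mathcal{O}(n^{-1/2})$, producing an overall $\mathcal{O}(n^{-1})$ contribution and yielding \eqref{hn3}. The only delicate point throughout is to ensure all implicit constants depend only on the compact interval chosen for $s$ and $t$, which is automatic from the uniformity already present in Muckenhoupt's theorem.
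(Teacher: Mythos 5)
Your proposal is correct and follows essentially the same route as the paper: \eqref{hn1} is taken from Muckenhoupt's classical uniform asymptotics (the paper simply cites \cite[eq.\ (2.5)]{muckenI} rather than re-deriving it by WKB/Plancherel--Rotach), and \eqref{hn2}, \eqref{hn3} are obtained exactly as you do, by the product-to-sum identity and by summing consecutive levels with the $\mathcal{O}(n^{-1/2})$ phase-shift estimate. The only blemish is a typo in your displayed main term, where the amplitude should be $\frac{2^{1/4}}{\sqrt{\pi}\,n^{1/4}}$ (equivalently $\frac{\sqrt{2}}{\sqrt{\pi}(2n)^{1/4}}$) rather than $\frac{\sqrt{2}}{\sqrt{\pi}\,n^{1/4}}$; your stated outcome $\frac{1}{\pi\sqrt{2n}}\left[\cos((s-t)\sqrt{2n})+(-1)^n\cos((s+t)\sqrt{2n})\right]$ corresponds to the correct amplitude, so the conclusion is unaffected.
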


\begin{proof} 
We see that \eqref{hn1} is a consequence of known asymptotics of Hermite functions (see \cite[eq.\ (2.5)]{muckenI} which read
\begin{equation*}
h_n(s)= \frac{\sqrt{2}}{\sqrt{\pi}(2n+1)^{\frac{1}{4}}} \cos\left(s\sqrt{2n+1}-\frac{n\pi}{2}\right)+\mathcal{O}\left( \frac{1}{n^{3/4}} \right).
\end{equation*}
The asymptotics \eqref{hn2} and \eqref{hn3} are then direct consequences of \eqref{hn1}.
\end{proof}

It is then possible to isolate the main contribution in \eqref{edn} as follows. 

\begin{prop}\label{unif-rem}
Let us consider $d\geq 2$ and $(x, y)\in \R^{d}\times \R^{d}$. The spectral function $e_{d,n}(x,y)$ satisfies the following for $n\gg 1$:
\begin{eqnarray}\label{en-pr}
e_{d,n}(x,y) &=& \frac{1}{\pi^{\frac{d+1}{2} }\sqrt{2} \Gamma \left( \frac{d-1}{2}\right)} \sum_{\substack{(k,\ell)\in \N^\star \times \N^\star \\ 
k+2\ell=n}} \frac{\cos((\hat{x}-\hat{y})\sqrt{2k})  }{\sqrt{k}}  \ell^{\frac{d-3}{2}} \\ \label{en-pr2}
& & +\frac{(-1)^n}{\pi^{\frac{d+1}{2} }\sqrt{2} \Gamma \left( \frac{d-1}{2}\right)} \sum_{\substack{(k,\ell)\in \N^\star \times \N^\star \\ 
k+2\ell=n}} \frac{  \cos( (\hat{x}+\hat{y})\sqrt{2k} )}{\sqrt{k}}  \ell^{\frac{d-3}{2}} \\
& & + \mathcal{O}\left(\log(n) n^{\frac{d-3}{2}} \right) \notag 
\end{eqnarray}
where the remainder is uniform with respect to $|x|+|y|$.
\end{prop}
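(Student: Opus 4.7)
The strategy is to plug the Hermite asymptotics \eqref{hn2} together with the expansion \eqref{Adell} of $B_d(\ell)$ directly into the Cauchy product \eqref{edn}, and then show that the resulting cross-error sums, as well as the two boundary contributions $k=0$ and $\ell=0$, all fit inside the announced remainder $\mathcal{O}(\log(n)\,n^{(d-3)/2})$.

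First I peel off the boundary of the convolution index set. The term $\ell=0$ (i.e.\ $k=n$) equals $h_n(\hat x)h_n(\hat y)=\mathcal{O}(n^{-1/2})$ by \eqref{hn1}, and for even $n$ the term $k=0$ equals $h_0(\hat x)h_0(\hat y)\,B_d(n/2)=\mathcal{O}(n^{(d-3)/2})$; both are uniformly absorbed in the remainder. On the bulk $k,\ell\geq 1$ I substitute \eqref{hn2} and \eqref{Adell}. The parity identity $(-1)^k=(-1)^n$, forced by $k+2\ell=n$, makes the two principal pieces of the product $h_k(\hat x)h_k(\hat y)\,B_d(\ell)$ reproduce exactly the two sums \eqref{en-pr}--\eqref{en-pr2} claimed in the proposition.

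It remains to bound the cross errors, whose magnitudes are all controlled (up to constants depending only on $d$ and on compact bounds for $\hat x,\hat y$) by the three sums
\begin{equation*}
S_1 = \sum_{\substack{k+2\ell=n\\ k,\ell\geq 1}} \frac{\ell^{(d-5)/2}}{\sqrt{k}},\qquad
S_2 = \sum_{\substack{k+2\ell=n\\ k,\ell\geq 1}} \frac{\ell^{(d-3)/2}}{k},\qquad
S_3 = \sum_{\substack{k+2\ell=n\\ k,\ell\geq 1}} \frac{\ell^{(d-5)/2}}{k}.
\end{equation*}
Each is estimated by a Riemann-sum comparison with a Beta integral: setting $k=nt$ and $\ell\simeq n(1-t)/2$ reduces the question to integrals of the form $\int_0^1 t^{-\alpha}(1-t)^\beta\,dt$, truncated at scale $1/n$ near endpoints where they diverge. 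One gets $S_1,S_3\ll n^{(d-3)/2}\log n$, while the borderline case is $S_2=\mathcal{O}(n^{(d-3)/2}\log n)$, the logarithm coming from the $1/t$ divergence near $t=0$ (large $k$). Summed together, the dominant contribution is $S_2$, which yields the advertised remainder.

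The hard part will be the careful bookkeeping in $S_2$ near $t=0$: one must split the range at $k=n/2$ and handle large $k$ (where the $\log n$ genuinely appears) separately from small $k$ (where $\ell$ is close to $n/2$ and the Beta-type tail contributes only $\mathcal{O}(n^{(d-4)/2})$). Once this is done, uniformity in $(x,y)$ is automatic: the Muckenhoupt asymptotics \eqref{hn1}--\eqref{hn2} are uniform on compact intervals in $(\hat x,\hat y)$, which by \eqref{hat} corresponds to a compact bound on $|x|+|y|$, and the estimates of $S_1,S_2,S_3$ depend only on $n$.
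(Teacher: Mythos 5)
Your proposal follows essentially the same route as the paper: start from the exact Cauchy-product formula \eqref{edn}, discard the endpoint indices $k=0$ and $k=n$, insert the Muckenhoupt asymptotics \eqref{hn2} together with \eqref{Adell}, use the parity identity $(-1)^k=(-1)^n$, and bound the resulting error sums, the dominant one being $\sum_{k+2\ell=n}\ell^{\frac{d-3}{2}}/k\lesssim n^{\frac{d-3}{2}}\log(n)$, exactly as in the paper. One bookkeeping correction: in your $S_2$ the logarithm arises from the \emph{small}-$k$ range (there $\ell\simeq n/2$, so $\ell^{\frac{d-3}{2}}\simeq n^{\frac{d-3}{2}}$ and $\sum_{k\leq n/2}1/k\simeq\log n$), i.e.\ from $t$ near $0$ in your parametrization $k=nt$, whereas the range $k\geq n/2$ contributes only $\mathcal{O}\big(n^{\frac{d-3}{2}}\big)$ with no logarithm; your parenthetical ``large $k$'' and the claimed $\mathcal{O}\big(n^{\frac{d-4}{2}}\big)$ for the small-$k$ part have these two regions switched, although the total bound you state is correct and the announced remainder is unaffected.
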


\begin{proof} 
Our starting point is the exact formula \eqref{edn} which writes 
\begin{equation*}
e_{d,n}(x,y) = \frac{1}{\pi^{\frac{d-1}{2}} \Gamma \left( \frac{d-1}{2}\right)} \sum_{\substack{(k,\ell)\in \N\times \N \\
k+2\ell=n}} h_k(\hat{x})h_k(\hat{y})\left( \ell^{\frac{d-3}{2}}+\mathcal{O}(\ell^{\frac{d-5}{2}})\right).
\end{equation*}
Let us set $\mathcal{R}=|x|+|y|$.
Note that \eqref{hat} implies the bounds $|\hat{y}|\leq \hat{x}\leq \mathcal{R}$. Thanks to Proposition \ref{herm} we have $\|h_k\|_{L^\infty(-\mathcal{R},\mathcal{R})}\leq \frac{C}{(1+k)^{\frac{1}{4}}}$ for any $k\in \N$ and therefore we can discard the eventual endpoints $k=0$ and $k=n$ so that 
\begin{equation*}
e_{d,n}(x,y) = \frac{1}{\pi^{\frac{d-1}{2}} \Gamma \left( \frac{d-1}{2}\right)} \sum_{\substack{(k,\ell)\in \N^\star \times \N^\star \\
k+2\ell=n}} h_k(\hat{x})h_k(\hat{y}) \left( \ell^{\frac{d-3}{2}}+\mathcal{O}(\ell^{\frac{d-5}{2}}) \right)+\mathcal{O}\Big( n^{\frac{d-3}{2}}+\frac{1}{\sqrt{n}} \Big).
\end{equation*}
Since $d\geq 2$, the remainder is indeed $\mathcal{O}\big( n^{\frac{d-3}{2}}\big)$. An application of Proposition \ref{herm} shows that 
\begin{equation*}
h_k(\hat{x})h_k(\hat{y}) \left( \ell^{\frac{d-3}{2}}+\mathcal{O}(\ell^{\frac{d-5}{2}}) \right)
\end{equation*}
equals
\begin{equation}\label{remrem}
\frac{1}{\pi \sqrt{2k}}\left[ \cos( (\hat{x}-\hat{y})\sqrt{2k})+(-1)^k  \cos( (\hat{x}+\hat{y})\sqrt{2k} )  \right]  \ell^{\frac{d-3}{2}}  +\mathcal{O}\Big( \frac{\ell^{\frac{d-5}{2}}}{\sqrt{k}}+ \frac{\ell^{\frac{d-3}{2}}}{k}\Big)
\end{equation}
and the conclusion will be obtained by summation of the remainders and the obvious remark $(-1)^{k}=(-1)^{n-2\ell}=(-1)^n$. For the remainders, this indeed follows by separating the contributions $k\leq \frac{n}{2}$ and $k>\frac{n}{2}$ which gives the three estimates 
\begin{eqnarray*}
d=2 &\Rightarrow &   \sum_{\substack{(k,\ell)\in \N^\star \times \N^\star \\
  k+2\ell=n}} \frac{1}{\sqrt{k}} \ell^{\frac{d-5}{2}} \lesssim  \frac{1}{\sqrt{n}}, \\
d=3 & \Rightarrow &   \sum_{\substack{(k,\ell)\in \N^\star \times \N^\star \\
  k+2\ell=n}} \frac{1}{\sqrt{k}} \ell^{\frac{d-5}{2}} \lesssim  \frac{\ln(n)}{\sqrt{n}}, \\
d\geq 4 & \Rightarrow &   \sum_{\substack{(k,\ell)\in \N^\star \times \N^\star \\
  k+2\ell=n}} \frac{1}{\sqrt{k}} \ell^{\frac{d-5}{2}} \lesssim  n^{\frac{d-4}{2}} \\ 
\end{eqnarray*}
that turn out to be less than the contribution of the second term in the remainder in \eqref{remrem}: 
\begin{equation*}
  \sum_{\substack{(k,\ell)\in \N^\star \times \N^\star \\
  k+2\ell=n}} \frac{1}{k} \ell^{\frac{d-3}{2}}\lesssim  \log(n) n^{\frac{d-3}{2}}.\qedhere
\end{equation*}
\end{proof}

The sum appearing in \eqref{en-pr} and \eqref{en-pr2} can be well approximated using an integral via the following results (the proof are postponed below). 

\begin{lemm}\label{lemco}  
Let us fix $\beta>-1$. Then, as $\alpha \to \infty$, the following holds 
\begin{equation}\label{lemco1}
\int_{0}^{1} \frac{\cos(\alpha \sqrt{t} )}{\sqrt{t}} (1-t)^{\beta}\,\mathrm{d}t=\mathcal{O}\left(\frac{1}{\alpha^{1+\beta}} \right).
\end{equation}
Moreover, for any $\alpha_0>0$, there exists $\ep(\alpha_0,\beta)\in[0,1)$ such that for any $\alpha \geq \alpha_0$ the following holds
\begin{equation}\label{lemco2}
  \left\vert\int_{0}^{1} \frac{\cos(\alpha \sqrt{t} )}{\sqrt{t}} (1-t)^{\beta} \,\mathrm{d}t \right\vert \leq \ep(\alpha_0,\beta)\int_{0}^{1} \frac{(1-t)^{\beta}}{\sqrt{t}} \,\mathrm{d}t.
\end{equation}
\end{lemm}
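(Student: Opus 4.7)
The plan is to reduce both estimates to a standard integral representation of Bessel functions. First I would perform the change of variables $u=\sqrt{t}$, which gives
\begin{equation*}
I(\alpha) := \int_0^1 \frac{\cos(\alpha\sqrt{t})}{\sqrt{t}}(1-t)^\beta\,\mathrm{d}t = 2\int_0^1 \cos(\alpha u)(1-u^2)^\beta\,\mathrm{d}u.
\end{equation*}
The right-hand side is, up to a normalization, Poisson's classical integral representation of the Bessel function $J_{\beta+\frac{1}{2}}$, which is valid because $\beta>-1$ ensures $\beta+\frac{1}{2}>-\frac{1}{2}$. More precisely,
\begin{equation*}
I(\alpha) = \sqrt{\pi}\,\Gamma(\beta+1)\left(\frac{2}{\alpha}\right)^{\beta+\frac{1}{2}} J_{\beta+\frac{1}{2}}(\alpha).
\end{equation*}

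For \eqref{lemco1}, I would then apply the standard large-argument asymptotic $J_\nu(\alpha)=\mathcal{O}(\alpha^{-1/2})$ as $\alpha\to+\infty$. Combined with the explicit prefactor $\alpha^{-(\beta+\frac{1}{2})}$, this immediately yields $I(\alpha)=\mathcal{O}(\alpha^{-(1+\beta)})$, as claimed.

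For \eqref{lemco2}, I would use a continuity-and-compactness argument. Set
\begin{equation*}
\psi(\alpha) := \frac{|I(\alpha)|}{I(0)}, \qquad I(0) = \int_0^1\frac{(1-t)^\beta}{\sqrt{t}}\,\mathrm{d}t \in (0,+\infty).
\end{equation*}
Dominated convergence (the integrand being majorized by the integrable function $(1-t)^\beta/\sqrt{t}$) shows that $\psi$ is continuous on $[0,+\infty)$. Moreover, for every fixed $\alpha>0$ the set $\{t\in(0,1):|\cos(\alpha\sqrt{t})|=1\}$ is countable, so that
\begin{equation*}
|I(\alpha)| \leq \int_0^1 \frac{|\cos(\alpha\sqrt{t})|}{\sqrt{t}}(1-t)^\beta\,\mathrm{d}t < I(0),
\end{equation*}
which gives $\psi(\alpha)<1$ pointwise on $(0,+\infty)$. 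Combined with $\psi(\alpha)\to 0$ coming from \eqref{lemco1}, the continuous function $\psi$ attains its supremum on the closed half-line $[\alpha_0,+\infty)$ at some point $\alpha^\star\in[\alpha_0,+\infty)$. Setting $\ep(\alpha_0,\beta):=\psi(\alpha^\star)\in[0,1)$ then completes the proof.

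I do not anticipate any serious obstacle: the only step requiring any care is verifying Poisson's identity in the exact form above, but this is entirely classical and, if one preferred to avoid invoking Bessel functions, \eqref{lemco1} could instead be obtained by integration by parts in $s=\alpha u$ after splitting the integral near $u=1$ (where $(1-u^2)^\beta$ may fail to be smooth when $-1<\beta<0$). Going through $J_{\beta+\frac{1}{2}}$ is however shorter and makes the decay rate transparent.
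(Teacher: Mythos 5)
Your proof is correct, but for the decay estimate \eqref{lemco1} it takes a genuinely different route from the paper. After the substitution $u=\sqrt{t}$ (which both arguments perform), you invoke Poisson's representation $2\int_0^1\cos(\alpha u)(1-u^2)^\beta\,\mathrm{d}u=\sqrt{\pi}\,\Gamma(\beta+1)\left(\frac{2}{\alpha}\right)^{\beta+\frac{1}{2}}J_{\beta+\frac{1}{2}}(\alpha)$, valid precisely because $\beta>-1$, and then use the large-argument bound $J_{\nu}(\alpha)=\mathcal{O}(\alpha^{-1/2})$ as a black box; this is exactly the alternative the authors mention in a footnote and deliberately decline to follow. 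The paper instead gives an elementary, self-contained argument: writing $\beta=\lceil\beta\rceil-\varepsilon$, performing $\lceil\beta\rceil$ integrations by parts (the boundary terms vanish thanks to the bound $\bigl|\frac{\mathrm{d}^{\ell}}{\mathrm{d}t^{\ell}}(1-t^2)^{\beta}\bigr|\lesssim(1-t^2)^{\beta-\ell}$), and then splitting the resulting integral into a central piece and pieces of length $1/\alpha$ near $t=\pm1$ to cope with the non-smoothness of $(1-t^2)^\beta$ when $\beta\notin\N$. Your route is shorter and makes the exponent $1+\beta$ transparent, at the cost of importing classical Bessel asymptotics; the paper's route costs a page of bookkeeping but keeps the proof elementary, which is consistent with their stated aim of using only elementary tools for the spectral estimates. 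For \eqref{lemco2} your argument coincides with the paper's (continuity in $\alpha$, decay at infinity from \eqref{lemco1}, strict inequality for each fixed $\alpha>0$, then compactness); your explicit justification of the strict inequality via the countable set where $|\cos(\alpha\sqrt{t})|=1$ is in fact slightly more careful than the paper's brief appeal to the triangle inequality, and the only micro-detail left implicit is that the supremum of $\psi$ on $[\alpha_0,+\infty)$ is attained because either it is $0$ or, the function tending to $0$ at infinity, it is realized on a compact subinterval.
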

  
\begin{lemm}\label{lemcos} 
Let $F:[0,+\infty)\rightarrow [0,+\infty)$ be a bounded, differentiable and Lipschitz function.
For any $\beta\geq \frac{-1}{2}$ and any $a\geq 0$, the following asymptotics hold true as $n\rightarrow +\infty$:
\begin{equation}\label{sumcos}
  \sum_{\substack{ (k,\ell)\in\N^\star \times \N^\star  \\ k+2\ell =n }} \frac{F(a\sqrt{k})}{\sqrt{k}} \ell^\beta =  \frac{n^{\beta+\frac{1}{2}}}{2^{\beta+1}} \int_{0}^1 \frac{F(a \sqrt{n t})}{\sqrt{t}} (1-t)^\beta   \mathrm{d}t  
 +\mathcal{O}(n^{\beta}\log(n)).
\end{equation}
Moreover, the bound on the remainder is uniform with respect to the parameter $a$ belonging to a compact subset of $[0,+\infty)$.
\end{lemm}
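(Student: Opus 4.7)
The idea is to view the left-hand side of \eqref{sumcos} as a Riemann-type sum for the integral on the right, and to control the discrepancy via the Euler--Maclaurin formula. Introduce the auxiliary function
\begin{equation*}
g(\ell) := \frac{F(a\sqrt{n-2\ell})}{\sqrt{n-2\ell}}\,\ell^\beta, \qquad \ell\in[0,n/2).
\end{equation*}
The change of variables $\ell = n(1-t)/2$ yields
\begin{equation*}
\int_0^{n/2} g(\ell)\, d\ell = \frac{n^{\beta+1/2}}{2^{\beta+1}} \int_0^1 \frac{F(a\sqrt{nt})}{\sqrt{t}}(1-t)^\beta \, dt,
\end{equation*}
so that the main term on the right of \eqref{sumcos} is exactly $\int_0^{n/2} g$. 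It therefore suffices to compare $\sum_{\ell=1}^{L} g(\ell)$, where $L = \lfloor (n-1)/2\rfloor$, with $\int_0^{n/2} g$ and to show the difference is $O(n^\beta \log n)$.

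Applied on $[1,L]$, the Euler--Maclaurin formula gives
\begin{equation*}
\sum_{\ell=1}^{L} g(\ell) - \int_1^L g(\ell)\, d\ell = \frac{g(1)+g(L)}{2} + R, \qquad |R| \leq \tfrac{1}{2}\int_1^L |g'(\ell)|\, d\ell.
\end{equation*}
A direct computation gives $|g(1)| = O(n^{-1/2})$ and $|g(L)| = O(n^\beta)$ (because $L \sim n/2$ and $n-2L\in\{1,2\}$); moreover the missing pieces $\int_0^1 g$ and $\int_L^{n/2} g$ are each $O(n^\beta)$ (the latter using that $g$ has only an integrable $(n-2\ell)^{-1/2}$ singularity at $\ell = n/2$). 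Since $\beta \geq -1/2$, all such boundary contributions are absorbed in $O(n^\beta)$.

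The crux is the estimate $\int_1^L |g'(\ell)|\, d\ell = O(n^\beta \log n)$. Differentiating and using that $F$ and $F'$ are bounded (uniformly in $a$ on a compact set), one obtains
\begin{equation*}
|g'(\ell)| \;\lesssim\; \frac{\ell^\beta}{n-2\ell} + \frac{\ell^\beta}{(n-2\ell)^{3/2}} + |\beta|\,\frac{\ell^{\beta-1}}{\sqrt{n-2\ell}}.
\end{equation*}
Substituting $k = n-2\ell$ and splitting each resulting integral at $k = n/2$, one finds that the first term produces the bound $n^\beta \int_1^{n/2} \frac{dk}{k} = O(n^\beta \log n)$, while all remaining contributions are $O(n^\beta)$ (either because $(n-k)^\beta \lesssim n^\beta$ on $k\leq n/2$, or by an explicit computation of $\int u^\gamma \,du$ on the complementary range). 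Uniformity in $a$ follows because only $\|F\|_\infty$ and $a\|F'\|_\infty$ enter these bounds.

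The main obstacle is precisely the region $\ell \approx n/2$ (equivalently $k \approx 1$): there $1/\sqrt{n-2\ell}$ is not small, $g$ is singular, and $g'$ is not uniformly bounded. This endpoint is the source of the logarithmic loss $\log n$; everywhere else a standard smooth Riemann-sum comparison would already give $O(n^\beta)$.
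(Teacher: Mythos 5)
Your proposal is correct and follows essentially the same route as the paper: an Euler--Maclaurin comparison of the sum with $\int_1^{\lfloor (n-1)/2\rfloor} \frac{F(a\sqrt{n-2t})}{\sqrt{n-2t}}t^\beta\,\mathrm{d}t$, the same derivative bound split near $t\simeq n/4$ producing the $n^\beta\log n$ loss, and the same identification of the main term via the change of variables $t\leftarrow 1-\frac{2t}{n}$ with $O(n^\beta)$ endpoint corrections. No gaps to report.
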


\subsection{Proof of \eqref{en-bessel}}\label{subs} 

\textbf{Step 1.} An application of \eqref{sumcos} with $a=\sqrt{2}(\hat{x}+\hat{y})$ and $\beta = \frac{d-3}{2} \geq -\frac{1}{2}$ shows
\begin{eqnarray*}
\sum_{\substack{(k,\ell)\in \N^\star \times \N^\star \\ k+2\ell=n } } \frac{\cos((\hat{x}+\hat{y})\sqrt{2k}) }{   \sqrt{k} } \ell^{\frac{d-3}{2}} &=& \frac{n^{\frac{d-2}{2}}}{2^{\frac{d-1}{2}} }\int_0^{1} \frac{\cos((\hat{x}+\hat{y})\sqrt{2nt})}{\sqrt{t}}(1-t)^{\frac{d-3}{2}}\,\mathrm{d}t\\
 & & \qquad +  \mathcal{O}(n^{\frac{d-3}{2}}\log(n) ).
\end{eqnarray*}
A similar formula holds true for $\hat{x}+\hat{y}$ replaced with $\hat{x}-\hat{y}$.
By denoting $c_d:=\frac{1}{\pi^{\frac{d+1}{2} }\sqrt{2} \Gamma \left( \frac{d-1}{2}\right)}$ for simplicity and using Proposition \ref{unif-rem}, we may write
\begin{eqnarray}\label{term-p}
e_{d,n}(x,y)  & = & c_d\frac{n^{\frac{d}{2}-1}}{2^{\frac{d-1}{2}}}  \int_0^{1} \frac{\cos((\hat{x}-\hat{y})\sqrt{2nt})}{\sqrt{t}}(1-t)^{\frac{d-3}{2}}\,\mathrm{d}t       \\\label{term-p2}
& & +(-1)^n c_d\frac{n^{\frac{d}{2}-1}}{2^{\frac{d-1}{2}}}  \int_0^{1} \frac{\cos((\hat{x}+\hat{y})\sqrt{2nt})}{\sqrt{t}}(1-t)^{\frac{d-3}{2}}\,\mathrm{d}t    + \mathcal{O}(n^{\frac{d-3}{2}}\log(n) ).
\end{eqnarray}
Under the conditions $|x|=|y|=1$, note that Lemma \ref{lemcos} ensures that the remainder is uniform with respect to $(x,y)$ due to the inequalities $0\leq \hat{x}-\hat{y}\leq 2$ and $0\leq \hat{x}+\hat{y}\leq 2$.

\textbf{Step 2.} Let us simplify \eqref{term-p}. For any $u\in \R$ and $\beta>-1$, we now recall the following formula involving the Bessel functions (see \cite[line (1.71.6)]{szeg}):
\begin{eqnarray*}
\int_{0}^1 \frac{\cos(u\sqrt{t})}{\sqrt{t}}(1-t)^\beta \mathrm{d}t& = & 
2\int_{0}^1 \cos(u t) (1-t^2)^{\beta} \mathrm{d}t \\
& =& \int_{-1}^1 \cos(ut) (1-t^2)^{\beta} \mathrm{d}t\\
& =&  \sqrt{\pi}\Gamma(\beta+1) \left( \frac{2}{u} \right)^{\beta+\frac{1}{2}} J_{\beta+\frac{1}{2}}(u).
\end{eqnarray*}
Looking at the definition of the normalized Bessel function \eqref{norm-bess} and remembering the equality $\hat{x}-\hat{y}=|x-y|$ (see \eqref{hat2}), we see that the right-hand side of \eqref{term-p} is 
\begin{eqnarray*}
 & & \frac{1}{\pi^{\frac{d+1}{2} }\sqrt{2} \Gamma \left( \frac{d-1}{2}\right)} \times \frac{n^{\frac{d}{2}-1}}{2^{\frac{d-1}{2}}}\times \sqrt{\pi} \Gamma\Big( \frac{d-1}{2} \Big) \widetilde{J}_{\frac{d}{2}-1}(\sqrt{2n} |x-y| )\\
 & & \qquad = \frac{n^{\frac{d}{2}-1}}{(2\pi)^{\frac{d}{2}}}\widetilde{J}_{\frac{d}{2}-1}(\sqrt{2n} |x-y| ).
\end{eqnarray*}

\textbf{Step 3.} Here are two different ways to finally obtain \eqref{en-bessel}. Both exploit the condition $|x-y|\leq 1$ (see \eqref{hat} for the definition of $\hat{x}$ and $\hat{y}$) in the following way:
\begin{equation*}
\hat{x}+\hat{y}=|x+y| =\sqrt{2|x|^2+2|y|^2-|x-y|^2} \geq \sqrt{3}.
\end{equation*}

Firstly, by applying \eqref{lemco1} with $\alpha = (\hat{x}+\hat{y})\sqrt{2n}\gtrsim \sqrt{n}$ and $\beta=\frac{d-3}{2}$, we find that \eqref{term-p2} is 
\begin{equation}\label{fin-rem}
n^{\frac{d}{2}-1}\mathcal{O}\big( n^{-\frac{d-1}{4}}\big)
+\mathcal{O}\big(n^{\frac{d-3}{2}}\log(n)\big)
=\mathcal{O}\big(n^{\frac{d-3}{4}}+n^{\frac{d-3}{2}}\log(n)\big).
\end{equation}
By separating the cases $d=2$ and $d\geq 3$, we see that the last remainder is precisely 
\begin{equation}\label{fin-rem2}
\rem
\end{equation}
namely the one appearing in the right-hand side of \eqref{en-bessel}. Hence, \eqref{en-bessel} is proved.

Another approach would be to remark that the computations in Step 2 and the terms \eqref{term-p} and \eqref{term-p2} lead to 
\begin{equation}\label{seco-term}
e_{d,n}(x,y)=\frac{n^{\frac{d}{2}-1}}{(2\pi)^{\frac{d}{2}}}\Big(
\widetilde{J}_{\frac{d}{2}-1}(\sqrt{2n} |x-y| )+(-1)^n\widetilde{J}_{\frac{d}{2}-1}(\sqrt{2n} |x+y| ) \Big)+\mathcal{O}\big(n^{\frac{d-3}{2}}\log(n))\big).
\end{equation}
Then by using the usual asymptotics of the Bessel functions as a black box for 
$\widetilde{J}_{\frac{d}{2}-1}(\sqrt{2n} |x+y| )$, we recover \eqref{fin-rem}.

\subsection{Proof of \eqref{en-diag}} We may give two arguments following the previous computations.
Since the remainder of \eqref{en-bessel} is uniform with respect to $x$ and $y$ (upon assuming $|x-y|\leq 1$), one may make tend $y\rightarrow x$ to get 
\begin{equation*}
e_{d,n}(x,x) = \frac{n^{\frac{d}{2}-1}}{(2\pi)^{\frac{d}{2}}} \widetilde{J}_{\frac{d}{2}-1}(0)+\rem
\end{equation*}
which is exactly \eqref{en-diag} because of the equality $\widetilde{J}_{\frac{d}{2}-1}(0)=\frac{1}{\Gamma(d/2)}$.

Another argument would be to follow \eqref{term-p}, \eqref{term-p2} and \eqref{fin-rem2} of the last proof that gives us

\begin{equation*}
e_{d,n}(x,x)=\frac{1}{\pi^{\frac{d+1}{2}} 2^{\frac{d}{2}} \Gamma(\frac{d-1}{2})} \underbrace{ \int_{0}^1 \frac{(1-t)^{\frac{d-3}{2}}}{\sqrt{t}} \mathrm{d}t }_{=:I(d)}+\rem.
\end{equation*}
Here, the integral $I(d)$ can be easily computed via a few changes of variables and a Wallis integral:
\begin{equation*}
I(d)=\int_{0}^1 \frac{t^{(d-3)/2}}{\sqrt{1-t}}\mathrm{d}t=
2\int_{0}^1 \frac{w^{d-2}}{\sqrt{1-w^2}}\mathrm{d}t=2
\int_{0}^{\frac{\pi}{2}} \sin^{d-2}(\theta)\,\mathrm{d}\theta = \frac{\sqrt{\pi}\Gamma\left(\frac{d-1}{2} \right)}{\Gamma\left( \frac{d}{2}\right)}.
\end{equation*}
And we again obtain the expected principal term $\frac{n^{\frac{d}{2}-1}}{(2\pi)^{\frac{d}{2}}\Gamma( \frac{d}{2})}$ of $e_{d,n}(x,x)$ in 
\eqref{en-diag}.

\subsection{Proof of \eqref{en-loin}}

Any $c>0$ will be convenient. It is actually in the proof of \eqref{en-proche} that $c$ will have to be chosen under restrictive conditions.

For a constant $\ep_d\in (0,1)$ that will be chosen later, Step 2 in Subsection \ref{subs} shows that $\ep_d e_{d,n}(x,x)-e_{d,n}(x,y)$ equals 
\begin{equation*}
\frac{c_d n^{\frac{d}{2}-1}}{2^{\frac{d-1}{2}}}\left[\ep_d\int_{0}^{1} \frac{1}{\sqrt{t}}(1-t)^{\frac{d-3}{2}}\mathrm{d}t -
\int_{0}^{1} \frac{\cos((\hat{x}-\hat{y})\sqrt{2nt} ) }{\sqrt{t}}(1-t)^{\frac{d-3}{2}}\mathrm{d}t
\right] +o(n^{\frac{d}{2}-1}).
\end{equation*}
The relations \eqref{hat2} imply $(\hat{x}-\hat{y})\sqrt{2n}=|x-y|\sqrt{2n}$ so that from the assumption \eqref{en-loin} one infers 
\begin{equation*}
(\hat{x}-\hat{y})\sqrt{2n} \geq \sqrt{2}c
\end{equation*}
and therefore an application of \eqref{lemco2} yields 
\begin{equation*}
  \int_{0}^{1} \frac{\cos((\hat{x}-\hat{y})\sqrt{2nt} ) }{\sqrt{t}}(1-t)^{\frac{d-3}{2}}\mathrm{d}t \leq \varepsilon \Big(\sqrt{2}c,\frac{d-3}{2}\Big) \int_{0}^{1} \frac{1}{\sqrt{t}}(1-t)^{\frac{d-3}{2}}\mathrm{d}t.
\end{equation*}
By choosing now $\ep_d=\frac{1+\ep\left(\sqrt{2}c,\frac{d-3}{2}\right)}{2}$ which clearly belongs to $(0,1)$, we get
\begin{equation*}
 \frac{\ep_de_{d,n}(x,x)-e_{d,n}(x,y) }{n^{\frac{d}{2}-1}}\geq \frac{c_d}{2^{\frac{d-1}{2}}}\times \underbrace{\frac{1-\ep(\sqrt{2}c,\frac{d-3}{2})}{2}}_{>0} \int_{0}^{1} \frac{1}{\sqrt{t}}(1-t)^{\frac{d-3}{2}}\mathrm{d}t +o(1)
\end{equation*}
and hence $\ep_d e_{d,n}(x,x)-e_{d,n}(x,y)\geq 0$ holds for sufficiently large $n$. Finally, since the same argument could be applied to $\ep_d e_{d,n}(x,x)+e_{d,n}(x,y)$, this concludes the proof of \eqref{en-loin}.

\subsection{Proof of \eqref{en-proche}}

In the following, we make extensive use of the notations $\hat{x}$ and $\hat{y}$ introduced in \eqref{hat} and \eqref{hat2}. It turns out that it is convenient to introduce a new variable $s$ which is equivalent to the distance between $x$ and $y$: 

\begin{lemm}  
Let us consider $d\geq 2$ and $(x, y) \in \S^{d-1}\times \S^{d-1}$. Then the condition $|x-y|\leq 1$ is equivalent to the existence of a real number $s\in \left[0,\frac{\sqrt{3}}{2}\right]$ satisfying 
\begin{equation*}
\hat{y}=\sqrt{1-s} \qquad \mbox{and}\qquad \hat{x}=\sqrt{1+s}.
\end{equation*}
Moreover, $s$ satisfies 
\begin{equation}\label{dist}
s\leq |x-y|\leq \frac{2}{\sqrt{3}}s.
\end{equation}
\end{lemm}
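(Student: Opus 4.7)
The plan is to exploit the fact that $|x|=|y|=1$ forces $\hat x^2+\hat y^2=|x|^2+|y|^2=2$, so the point $(\hat x,\hat y)$ lies on the circle of radius $\sqrt 2$, and the substitution $\hat x^2=1+s$, $\hat y^2=1-s$ is just the natural parameter along this circle. First I would check that $\hat y\geq 0$ under the assumption $|x-y|\leq 1$: from $|x+y|^2=2(|x|^2+|y|^2)-|x-y|^2=4-|x-y|^2\geq 3$ one gets $|x+y|\geq\sqrt 3>1\geq |x-y|$, hence $\hat y=(|x+y|-|x-y|)/2\geq 0$. Since also $\hat x\geq \hat y$, setting $s:=\hat x^2-1=1-\hat y^2$ yields an $s\in[0,1]$ with $\hat x=\sqrt{1+s}$ and $\hat y=\sqrt{1-s}$.

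Next I would compute $|x-y|$ in terms of $s$ by using the identity $\hat x-\hat y=|x-y|$ from \eqref{hat2}:
\begin{equation*}
|x-y|^2=(\hat x-\hat y)^2=\hat x^2+\hat y^2-2\hat x\hat y=2-2\sqrt{1-s^2}.
\end{equation*}
The equivalence between $|x-y|\leq 1$ and $s\in[0,\sqrt 3/2]$ then follows immediately: $|x-y|^2\leq 1$ reads $\sqrt{1-s^2}\geq 1/2$, i.e.\ $s^2\leq 3/4$. Conversely, any $s\in[0,\sqrt 3/2]$ produces $|x-y|\leq 1$ via the same formula, so the equivalence is proved.

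The two-sided estimate \eqref{dist} is then obtained by direct algebra from the formula $|x-y|^2=2-2\sqrt{1-s^2}$. For the lower bound, one rewrites $|x-y|\geq s$ as $2-s^2\geq 2\sqrt{1-s^2}$; squaring (both sides are nonnegative) reduces this to $s^4\geq 0$, which is trivial. For the upper bound, $|x-y|\leq (2/\sqrt 3)\,s$ is equivalent to $\sqrt{1-s^2}\geq 1-\tfrac{2}{3}s^2$; here $1-\tfrac{2}{3}s^2\geq 1/2>0$ for $s\leq\sqrt 3/2$, so one may square and obtain the equivalent inequality $\tfrac{1}{3}s^2\geq \tfrac{4}{9}s^4$, i.e.\ $s^2\leq 3/4$, which holds by assumption. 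Note how the range $[0,\sqrt 3/2]$ for $s$ is precisely what makes the upper estimate saturate, which explains the role of the constant $2/\sqrt 3$ in \eqref{dist}.

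The only mildly delicate point in this proof is checking that the sign conditions ($\hat y\geq 0$, and $1-\tfrac{2}{3}s^2\geq 0$) are automatic under the hypothesis $|x-y|\leq 1$, so that all the squarings performed are legitimate; beyond this, the argument is a short elementary computation.
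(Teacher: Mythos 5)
Your proof is correct and takes essentially the same route as the paper: the parametrization follows from $\hat{x}^2+\hat{y}^2=2$ together with the positivity of $\hat{y}$, and your condition $\sqrt{1-s^2}\geq \tfrac12$ is exactly the paper's condition $\hat{x}\hat{y}=\langle x,y\rangle\geq \tfrac12$ in disguise. The only cosmetic difference is in \eqref{dist}, where the paper observes that $\frac{|x-y|}{s}=\frac{2}{\sqrt{1+s}+\sqrt{1-s}}$ is increasing in $s$ and evaluates it at $s=0$ and $s=\tfrac{\sqrt{3}}{2}$, while you verify both bounds by legitimate squarings; either elementary computation is fine.
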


\begin{proof} 
The definition \eqref{hat} implies that $\hat{x} \geqslant |x|= 1$. We also observe $\hat{x}^2+\hat{y}^2=|x|^2+|y|^2=2$, and therefore infer the following 
\begin{equation*}
\hat{x}\in [1,\sqrt{2}] \qquad \text{and}\qquad -1\leq \hat{y}\leq 1.
\end{equation*}
Using the condition $|x-y|\leq 1$ in \eqref{hat2}, we get $2-2\langle x,y\rangle =|x-y|^2 \leq 1$ from which we infer $\hat{x}\hat{y}=\langle x,y\rangle \geq \frac{1}{2}$. It forces $\hat{y}>0$. Now, there exists $s \in [0,1]$ such that $\hat{x}=\sqrt{1+s}$ and since $\hat{y}>0$ it follows that $\hat{y}=\sqrt{1-s}$. We also have 
\begin{equation*}
\sqrt{1-s}\sqrt{1+s}\geq \frac{1}{2} \qquad \text{if and only if}\qquad s\in \left[0,\frac{\sqrt{3}}{2}\right].
\end{equation*}
The inequality \eqref{dist} is now a consequence of \eqref{hat2} and the fact that $\frac{\hat{x}-\hat{y}}{s} = \frac{2}{\sqrt{1+s}+\sqrt{1-s}}$ is increasing with respect to $s$.
\end{proof}

We move to the proof of \eqref{en-proche}, for which our starting point is \eqref{edn}. Let us introduce 
\begin{equation}\label{defo}
\Omega_k(s):=h_k(1)^2-h_k(\sqrt{1-s})h_k(\sqrt{1+s})
\end{equation}
which allows to efficiently rewrite 
\begin{equation}\label{defo2}
e_{d,n}(x,x)-e_{d,n}(x,y)=\frac{1}{\pi^{\frac{d-1}{2}}}\sum_{ \substack{(k,\ell)\in \N\times \N\\ k+2\ell=n}   } \Omega_k(s) B_d(\ell).
\end{equation}
Since $h_0$ is a multiple of $x\mapsto e^{-x^2/2}$, one can check that for any $s$ the following holds  
\begin{equation}\label{defo3}
\Omega_0(s)=0.
\end{equation}
For $k=1$, we have $h_1(x)=\frac{\sqrt{2}}{\pi^{1/4}} xe^{-x^2/2}$ (see \eqref{fs}) and hence the  formula
\begin{equation*}
\Omega_1(s)=\frac{2}{\sqrt{\pi} e}\left(
1-\sqrt{1-s^2}
\right)
\end{equation*}
for which we get the asymptotics $\Omega_1(s)\simeq s^2$ near $s=0$.
This is actually a general fact for any $k\geq 1$. 

\begin{prop}\label{choix} 
There exist positive constants $C,C'$ and $c$ such that for any $k\geq 1$ and any $s\in [0,\frac{c}{\sqrt{k}}]$ the following holds 
\begin{equation*}
C s^2 \leq \frac{\Omega_k(s)}{\sqrt{k}}\leq C' s^2.
\end{equation*}
\end{prop}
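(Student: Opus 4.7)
The plan is to Taylor expand $G(s) := h_k(\sqrt{1-s}) h_k(\sqrt{1+s})$ around $s=0$. Note that $G$ is even in $s$ (swapping $s \leftrightarrow -s$ merely permutes the two factors), so $G'(0)=0$ automatically, and $\Omega_k(s) = h_k(1)^2 - G(s)$ satisfies $\Omega_k(0) = \Omega_k'(0) = 0$. Using $(\sqrt{1\pm s})'(0) = \pm \tfrac{1}{2}$ and $(\sqrt{1\pm s})''(0) = -\tfrac{1}{4}$, the chain rule gives
\[
\Omega_k''(0) = \tfrac{1}{2}\bigl[h_k(1) h_k'(1) + h_k'(1)^2 - h_k(1) h_k''(1)\bigr].
\]
Inserting the Hermite ODE $h_k''(1) = -2k\, h_k(1)$ and the ladder recurrence $h_k'(1) = \sqrt{2k}\, h_{k-1}(1) - h_k(1)$ and simplifying yields
\[
\Omega_k''(0) = \tfrac{1}{2}\bigl[2k(h_{k-1}(1)^2 + h_k(1)^2) - \sqrt{2k}\, h_{k-1}(1) h_k(1)\bigr].
\]

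The next step is to show that $\Omega_k''(0)/\sqrt{k}$ is pinched between positive constants, uniformly for every $k \geq 1$. By the AM--GM inequality $|h_{k-1}(1) h_k(1)| \leq \tfrac{1}{2}(h_{k-1}(1)^2 + h_k(1)^2)$,
\[
\Omega_k''(0) \geq \tfrac{1}{2}(h_{k-1}(1)^2 + h_k(1)^2)\,\sqrt{2k}\,\bigl(\sqrt{2k} - \tfrac{1}{2}\bigr),
\]
which is strictly positive for every $k \geq 1$ as soon as $h_{k-1}(1)^2 + h_k(1)^2 > 0$. This last condition holds because the three-term recurrence $x h_n(x) = \sqrt{n/2}\, h_{n-1}(x) + \sqrt{(n+1)/2}\, h_{n+1}(x)$ at $x=1$ would otherwise propagate zero to every $h_n(1)$, contradicting $h_0(1) = \pi^{-1/4}e^{-1/2} > 0$. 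Combined with \eqref{hn3}, which gives $\sqrt{k}(h_{k-1}(1)^2 + h_k(1)^2) \to \sqrt{2}/\pi$, and with the trivial bound $\sqrt{2k}|h_{k-1}(1) h_k(1)| = O(1)$, the positive sequence $\Omega_k''(0)/\sqrt{k}$ converges to $\sqrt{2}/(2\pi) > 0$ and is therefore pinched between positive constants uniformly in $k \geq 1$.

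For the remainder, I claim $|h_k^{(j)}(x)| \lesssim k^{(2j-1)/4}$ for $j \leq 3$ and $x$ in a fixed bounded interval around $1$, which follows from Proposition \ref{herm} for $j \leq 1$ together with iterated applications of $h_k''(x) = (x^2 - 2k - 1) h_k(x)$ for $j = 2, 3$. Applying the Leibniz rule and Faà di Bruno to $G$ then gives $|G'''(s)| \lesssim k$ uniformly for $s \in [0, 1/2]$. Taylor's formula with Lagrange remainder produces
\[
\Omega_k(s) = \tfrac{1}{2} \Omega_k''(0)\, s^2 + O(s^3 k), \qquad s \in [0, c/\sqrt{k}],
\]
so that dividing by $\sqrt{k}$ and using $s\sqrt{k} \leq c$ yields
\[
\frac{\Omega_k(s)}{\sqrt{k}} = s^2 \left[\frac{\Omega_k''(0)}{2\sqrt{k}} + O(c)\right].
\]
Choosing $c$ small enough so that the $O(c)$ error is dominated by half the uniform lower bound of $\Omega_k''(0)/(2\sqrt{k})$ concludes with constants $C, C'$ independent of $k$.

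The principal difficulty is the uniform-in-$k$ treatment: positivity of $\Omega_k''(0)/\sqrt{k}$ must hold for \emph{every} integer $k \geq 1$ (not merely asymptotically), and the $O(s^3 k)$ remainder must be outmatched by the $s^2\sqrt{k}$ main term on the whole range $s \leq c/\sqrt{k}$. Both points rest on propagating the sharp asymptotic \eqref{hn3} to higher derivatives of $h_k$ via the Hermite equation and recurrence, and they force $c$ to be chosen small in a way compatible with the uniform constants arising from Proposition \ref{herm}.
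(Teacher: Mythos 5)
Your argument is correct and follows essentially the same route as the paper: the same explicit formula for $\Omega_k''(0)$ via the Hermite ODE and ladder relation, positivity for every $k\geq 1$ via the same AM--GM inequality together with the non-vanishing of $(h_{k-1}(1),h_k(1))$, the $O(k)$ control of the third derivative through derivative bounds $|h_k^{(j)}|\lesssim k^{(2j-1)/4}$, and a Taylor expansion on $s\in[0,c/\sqrt{k}]$ with $c$ chosen small. Two harmless slips: the limit of $\Omega_k''(0)/\sqrt{k}$ is $\sqrt{2}/\pi$, not $\sqrt{2}/(2\pi)$, and the case $j=1$ of your derivative bound requires the ladder relation $h_k'=\sqrt{2k}\,h_{k-1}-xh_k$ (which you use elsewhere) rather than Proposition \ref{herm} alone.
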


The proof of this result is based on the following estimates of higher order derivatives of $\Omega_k$ (see the proof below). 

\begin{prop}\label{omder} 
There exist positive constants $C_1,C_1'$ and $C_2$ such that for any $k\geq 1$ the following holds 
\begin{equation}\label{omeg1}
  C_1 \sqrt{k}\leq  \Omega_k^{(2)}(0)  \leq C_1' \sqrt{k}
\end{equation}
and 
\begin{equation}\label{omeg2}
  \|\Omega_k^{(3)}\|_{L^\infty(0,\frac{\sqrt{3}}{2})}  \leq  C_2 k.
\end{equation}
\end{prop}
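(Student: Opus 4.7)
The plan is to reduce $\Omega_k''(0)$ to a positive-definite combination of $h_k(1)^2$ and $h_{k-1}(1)^2$ via the Hermite ODE and the standard ladder identity, so that \eqref{omeg1} follows at once from the local Weyl-type estimate \eqref{hn3}. For \eqref{omeg2} I would simply expand $\Omega_k^{(3)}(s)$ with the chain and product rules and control each term via a standard $L^\infty$-Bernstein bound for Hermite functions.

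\textbf{Derivatives at zero.} I set $u(s)=\sqrt{1-s}$, $v(s)=\sqrt{1+s}$, so that $u(0)=v(0)=1$, $u'(0)=-v'(0)=-\tfrac{1}{2}$, and $u''(0)=v''(0)=-\tfrac{1}{4}$. A direct chain-and-product-rule computation on $F(s)=h_k(u(s))h_k(v(s))$ yields $F'(0)=0$ and
\[
\Omega_k''(0)=-F''(0)=-\tfrac{1}{2}h_k''(1)h_k(1)+\tfrac{1}{2}h_k(1)h_k'(1)+\tfrac{1}{2}h_k'(1)^2.
\]
Using the Hermite equation $h_k''(1)=(1-(2k+1))h_k(1)=-2k\,h_k(1)$ together with the lowering identity $h_k'(1)+h_k(1)=\sqrt{2k}\,h_{k-1}(1)$, a short algebraic simplification produces
\[
\Omega_k''(0)=k\bigl(h_k(1)^2+h_{k-1}(1)^2\bigr)-\tfrac{\sqrt{2k}}{2}\,h_k(1)h_{k-1}(1).
\]
The cross term is bounded in absolute value by $\tfrac{\sqrt{2k}}{4}\bigl(h_k(1)^2+h_{k-1}(1)^2\bigr)$, which is negligible against the leading $k$-term for $k$ large enough; consequently there are constants $c,c'>0$ and an integer $k_0$ such that
\[
c\,k\bigl(h_k(1)^2+h_{k-1}(1)^2\bigr)\leq \Omega_k''(0)\leq c'\,k\bigl(h_k(1)^2+h_{k-1}(1)^2\bigr)\qquad (k\geq k_0).
\]
Inserting \eqref{hn3} then gives \eqref{omeg1} for $k\geq k_0$, and the finitely many remaining values $1\leq k<k_0$ are handled by a direct check.

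\textbf{Third derivative.} For $s\in[0,\tfrac{\sqrt 3}{2}]$ the arguments $u(s),v(s)$ stay in the fixed compact set $K:=\bigl[\sqrt{1-\sqrt 3/2},\sqrt{1+\sqrt 3/2}\bigr]\subset(0,\infty)$, and $u,v$ together with their first three derivatives are uniformly bounded on that interval. Expanding via Faà di Bruno, $\Omega_k^{(3)}(s)$ is a finite linear combination of terms of the form $h_k^{(a)}(u(s))\,h_k^{(b)}(v(s))$ with $a+b\leq 3$, multiplied by bounded polynomials in the derivatives of $u,v$. The $L^\infty$-bound $\|h_k\|_{L^\infty(\R)}\lesssim k^{-1/4}$ coming from \eqref{hn1}, combined with the Bernstein inequality for $-\partial_x^2+x^2$ (see \cite[Theorem 5.3]{imek-aif}), yields the pointwise estimate $\|h_k^{(j)}\|_{L^\infty(\R)}\lesssim k^{j/2-1/4}$ for every $j\geq 0$. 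Hence each term in the expansion is controlled by $k^{(a+b)/2-1/2}\leq k$, proving \eqref{omeg2}.

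The main obstacle is the lower bound in \eqref{omeg1}: since $h_k(1)^2$ oscillates and may be arbitrarily small along subsequences of $k$ by \eqref{hn1}, the term $k\,h_k(1)^2$ alone cannot produce $\sqrt{k}$. The key point is that, once the cross and squared derivative terms are rewritten through the ladder identity, one recovers the additional positive contribution $k\,h_{k-1}(1)^2$, which exactly compensates whenever $h_k(1)^2$ is small, thanks to the one-dimensional local Weyl law \eqref{hn3}.
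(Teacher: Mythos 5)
Your treatment of \eqref{omeg1} is essentially the paper's: the same chain-and-product computation at $s=0$, the same use of the Hermite equation and the ladder identity from \eqref{hn-diff} to reach $\Omega_k^{(2)}(0)=k\big(h_k(1)^2+h_{k-1}(1)^2\big)-\frac{\sqrt{2k}}{2}h_k(1)h_{k-1}(1)$, and the same appeal to \eqref{hn3}. One small caveat: your ``direct check'' for $1\leq k<k_0$ silently presupposes $\Omega_k^{(2)}(0)>0$, and $k_0$ is not explicit. Note that your AM-GM bound already gives $\Omega_k^{(2)}(0)\geq \frac{k}{2}\big(h_k(1)^2+h_{k-1}(1)^2\big)$ for \emph{every} $k\geq 1$, so all that is missing is the observation that $h_k(1)$ and $h_{k-1}(1)$ cannot vanish simultaneously (equivalently $(h_k(1),h_k'(1))\neq(0,0)$, which the paper gets from Cauchy uniqueness for the second-order ODE in \eqref{hn-diff}); with that one line the small-$k$ case is rigorous.

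For \eqref{omeg2} there is a genuine gap. The estimate $\|h_k\|_{L^\infty(\R)}\lesssim k^{-1/4}$ is false: \eqref{hn1} is uniform only on compact intervals, and the global supremum of $h_k$ is attained near the turning points $\pm\sqrt{2k+1}$ and is of size $k^{-1/12}$, not $k^{-1/4}$. Since the Bernstein inequality you invoke (the one quoted as \eqref{berns}) controls derivatives by the \emph{global} sup norm, your argument either starts from a false premise or, using the correct global bound $k^{-1/12}$, only yields $\|h_k^{(j)}\|_{L^\infty(\R)}\lesssim k^{j/2-1/12}$ and hence $\|\Omega_k^{(3)}\|_{L^\infty}\lesssim k^{4/3}$. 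That weaker exponent is not harmless: in the proof of Proposition \ref{choix} the bound $C_2k$ is exactly what makes $\Omega_k^{(2)}(s)\gtrsim\sqrt{k}$ on the whole range $s\leq c/\sqrt{k}$; with $k^{4/3}$ the admissible range shrinks to $s\lesssim k^{-5/6}$, and \eqref{en-proche} for $|x-y|\leq c/\sqrt{n}$ no longer follows. The fix is to argue locally, which is what the paper does: on the fixed compact interval where $\sqrt{1\pm s}$ lives, iterate the pointwise identities $h_k'=\sqrt{2k}\,h_{k-1}-xh_k$ and $h_k''=(x^2-2k-1)h_k$ together with the local bound $\|h_j\|_{L^\infty(-M,M)}\lesssim j^{-1/4}$ coming from \eqref{hn1}, to obtain $\|h_k^{(j)}\|_{L^\infty(-M,M)}\lesssim k^{(2j-1)/4}$ (this is \eqref{faa}); your Fa\`a di Bruno bookkeeping then goes through verbatim and gives the stated $C_2k$.
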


\begin{proof}[Proof of Proposition \ref{choix}]
Let us denote $c=\frac{C_1}{2C_2}$, where $C_1$ and $C_2$ are given by Proposition \ref{omder}. The mean value theorem allows us to bound from below as follows: for any $s \in [0,\frac{c}{\sqrt{k}}]$ we may write
\begin{equation*}
\Omega_k^{(2)}(s)\geq \Omega_k^{(2)}(0)-s C_2 k 
\geq (C_1 -c C_2)\sqrt{k}=\frac{C_1}{2} \sqrt{k}, 
\end{equation*}
which gives after two integrations in $s$ that 
\begin{equation*}
  \Omega_k(s)\geq \Omega_k(0)+\Omega_k'(0)s+ \frac{C_1}{4} \sqrt{k} s^2.
\end{equation*}
Furthermore, \eqref{defo} shows $\Omega_k(0)=\Omega_k'(0)=0$. The lower bound is thus obtained. The upper bound is similarly obtained by observing  
\begin{equation*}
\Omega_k^{(2)}(s)\leq \Omega_k^{(2)}(0)+s C_2 k \leq (C_1'+ cC_2)\sqrt{k}.\qedhere
\end{equation*}

\end{proof}

\begin{proof}[Proof of Proposition \ref{omder}]
\textbf{Step 1.} Let us check that \eqref{omeg1} holds asymptotically as $k\to \infty$. To this end we use straightforward computations of the double differentiation of \eqref{defo}, which gives 
\begin{eqnarray*}
 \Omega_k^{'}(s) &= &    \frac{h_k'(\sqrt{1-s})h_k(\sqrt{1+s})}{2\sqrt{1-s}} -\frac{h_k(\sqrt{1-s})h_k'(\sqrt{1+s})}{2\sqrt{1+s}}    \\
  \Omega_k^{(2)}(s) &=& - \frac{1}{4} \frac{h_{k}''(\sqrt{1-s}) h_{k}(\sqrt{1+s})}{1-s}-\frac{1}{4} \frac{h_{k}(\sqrt{1-s})h_{k}''(\sqrt{1+s})}{1+s}\\
 & &  +\frac{1}{4} \frac{h_{k}'(\sqrt{1-s}) h_{k}(\sqrt{1+s})}{(1-s)^{3/2}}      +
 \frac{1}{4} \frac{h_{k}(\sqrt{1-s}) h_{k}'(\sqrt{1+s})}{(1+s)^{3/2}} \\
 & &  +\frac{1}{2} \frac{h_{k}'(\sqrt{1-s})h_{k}'(\sqrt{1+s})}{\sqrt{1-s}\sqrt{1+s}},
\end{eqnarray*}
so that evaluating at $s=0$ gives 
\begin{equation*}
  2\Omega_k^{(2)}(0) = - h_{k}''(1)h_{k}(1) + \big(h_{k}(1)+h_k'(1)\big)h_{k}'(1).
\end{equation*}
We now recall classical formulas on the Hermite functions:
\begin{equation}\label{hn-diff}
-h_n''+x^2h_n=(2n+1)h_n \qquad \mbox{and}\qquad h_n'=\sqrt{2n}h_{n-1}-xh_n.
\end{equation}
This allows the rewriting 
\begin{eqnarray}\nonumber
2\Omega_k^{(2)}(0) &=&  2kh_k(1)^2+\sqrt{2k}h_{k-1}(1)\left( \sqrt{2k}h_{k-1}(1)-h_k(1)\right)\\\label{der2}
&=&  \sqrt{2k}\left[ \sqrt{2k}(h_k(1)^2+h_{k-1}(1)^2)-h_{k-1}(1) h_k(1) \right].
\end{eqnarray}

Using \eqref{hn3} and \eqref{hn1} we obtain 
\begin{equation*}
  \Omega_k^{(2)}(0) \eq{k\rightarrow +\infty} \frac{\sqrt{2}}{\pi}\sqrt{k}.
\end{equation*}

\textbf{Step 2.} Next, in order to check that \eqref{omeg1} is also true for the small integers, it is sufficient to prove $\Omega_k^{(2)}(0)>0$ for any $k\geq 1$. For this end, we notice the elementary inequality
\begin{eqnarray*}
\sqrt{2k}(x^2+y^2)-xy &=&\Big(\sqrt{2k}-\frac{1}{2}\Big)(x^2+y^2)+ \frac{1}{2}(x-y)^2, \qquad  \forall (x,y)\in \R^2 \\
& \geq & \Big(\sqrt{2k}-\frac{1}{2}\Big)(x^2+y^2).
\end{eqnarray*}
As a consequence of \eqref{der2}, we see $\Omega_k^{(2)}(0)\geq 0$.
Moreover, the equality $\Omega_k^{(2)}(0)=0$ would imply $h_k'(1)=h_k(1)=0$ but since $h_k$ is a non-zero function satisfying the second order linear differential equation given in \eqref{hn-diff}, the linear Cauchy theory implies $(h_k(1),h_k'(1))\neq (0,0)$. 
Therefore, we get the positivity of $\Omega_k^{(2)}(0)$ and hence \eqref{omeg1} holds for all $k\geq 1$.

\textbf{Step 3.} It remains to prove \eqref{omeg2}. Writing $u_k(s)=h_k(\sqrt{1+s})$ for $s\in \left[ -\frac{\sqrt{3}}{2},\frac{\sqrt{3}}{2}\right]$, we may differentiate \eqref{defo}: 
\begin{equation*}
|\Omega_k^{(3)}(s)|=|u_k^{(3)}(s) u_k(-s)-3u_k^{(2)}(s) u_k'(-s)+3u_k'(s) u_k^{(2)}(-s)-u_k(s)u_k^{(3)}(-s)|.
\end{equation*}
Owing to the triangle inequality, \eqref{omeg2} will follow from the fact that for $\ell \in \{0,1,2,3\}$ and for all $k \geq 1$ the following inequality holds
\begin{equation}\label{faa}
\sup\limits_{|s|\leq \frac{\sqrt{3}}{2} } |u_k^{(\ell)}(s)| \leq c_\ell k^{(2\ell-1)/4}.
\end{equation}
Note that \eqref{hn1} gives $\|h_k\|_{L^\infty(-M,M)}\lesssim \frac{1}{k^{1/4}}$ for any fixed constant $M>0$ which implies the case $\ell = 0$ of \eqref{faa}. 
 
The cases $\ell \in \{1,2,3\}$ are then the consequence an application of the Fa\`a di Bruno formula for the differentiation of composition of functions as indeed one can observe that the derivatives of $s\mapsto \sqrt{1+s}$ are bounded on the fixed interval $s\in \left[ -\frac{\sqrt{3}}{2},\frac{\sqrt{3}}{2}\right]$ and the fact that \eqref{hn-diff} and the bound in the $\ell = 0$ iteratively imply the bounds $\|h_k^{(\ell)}\|_{L^\infty(-M,M)}\leq c_\ell' k^{(2\ell-1)/4 }$ for suitable positive constants $c_\ell'$.
\end{proof}

We are now ready to prove \eqref{en-proche}. Let us assume that $|x-y|\leq \frac{c}{\sqrt{n}}$, where $c$ is given by Proposition \ref{choix}, in particular $s\leq \frac{c}{\sqrt{n}}$ thanks to \eqref{dist}. 

Let us now use the formula \eqref{defo2}, incorporating \eqref{defo3} and applying Proposition \ref{choix}. This allows us to write 
\begin{equation*}
  e_{d,n}(x,x)-e_{d,n}(x,y) \simeq s^2 \sum_{   \substack{  (k,\ell)\in \N^\star \times \N\\k+2\ell=n}} \sqrt{k} B_d(\ell).
\end{equation*}

From \eqref{dist} and \eqref{Adell} we may bound from below: 
\begin{equation*}
e_{d,n}(x,x)-e_{d,n}(x,y)  \gtrsim |y-x|^2\sum_{ \substack{\lfloor \frac{n}{4}\rfloor \leq \ell \leq \lfloor \frac{n}{3} \rfloor \\ }} (n-2\ell)^{\frac{1}{2}}  \ell^{\frac{d-3}{2}} \gtrsim n^{\frac{d}{2}}|y-x|^2.
\end{equation*}
For the upper bound, the argument is completely similar (upon considering the eventual term $\ell=0$ containing $B_d(0)=1$) as one can write 
\begin{equation*}
e_{d,n}(x,x)-e_{d,n}(x,y) \lesssim |y-x|^2\Big(\sqrt{n}+ \sum_{   \substack{  (k,\ell)\in \N^\star \times \N^\star\\
k+2\ell=n}} \sqrt{k} \ell^{\frac{d-3}{2}} \Big).
\end{equation*}
The conclusion follows by bounding separating the contributions $k\leq \frac{n}{3}$ and $k\geq \frac{n}{3}$ in the above sum in order to obtain 
\begin{equation*}
\sum_{\substack{(k,\ell)\in \N^\star \times \N^\star\\
k+2\ell=n}} \sqrt{k} \ell^{\frac{d-3}{2}}\lesssim n^{\frac{d}{2}}.
\end{equation*}

\appendix

\section{Proof of Theorem \ref{thm.main-harmonic}}\label{anxA}

For the convenience of the reader, we give elements of the proof. We recall the following classical result\footnote{We also refer to \cite[Theorem 1.d.6,
Corollary 1.f.9]{linden2} in the natural context of Banach lattices of finite cotype (Rademacher series and Gaussian series have the same behavior).}.

\begin{prop}\label{proA1}
Let us fix $p\in[1,+\infty)$ and a sequence $(u_n)_{n\in \N}$ in $L^p(\R^d)$. Then the following two statements are equivalent:
\begin{enumerate}[i) ]
\item the function $\sqrt{\sum\limits_{n\in \N} |u_n|^2 }$ belongs to $L^p (\R^d)$.
\item the Gaussian series $\sum g_n(\omega) u_n$ almost surely converges in $L^p(\R^d)$, 
\end{enumerate}
\end{prop}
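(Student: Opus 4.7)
The plan is to combine a pointwise Khintchine--Kahane inequality for Gaussian random variables with Fubini's theorem to get a two-sided comparison between the $L^p(\Omega; L^p(\R^d))$-norm of the partial sums and the $L^p$-norm of the (truncated) square function, and then to invoke the It\^o--Nisio theorem to translate this integral comparison into the desired a.s. convergence statement.

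First I would recall the scalar Khintchine--Kahane inequality in its Gaussian form: for any real sequence $(a_n)$ and any $p \in [1,+\infty)$,
\begin{equation*}
\E\Big[\Big|\sum_n g_n a_n\Big|^p\Big]^{1/p} \simeq_p \Big(\sum_n a_n^2\Big)^{1/2}.
\end{equation*}
Applying this at each point $x \in \R^d$ to the truncated sequence $(u_n(x))_{n\leq N}$ gives, for every fixed $N$,
\begin{equation*}
\E\Big[\Big|\sum_{n \leq N} g_n u_n(x)\Big|^p\Big] \simeq_p \Big(\sum_{n \leq N} |u_n(x)|^2\Big)^{p/2}.
\end{equation*}
Integrating in $x$ and using Fubini (legitimate since both sides are nonnegative), I obtain
\begin{equation*}
\E\big[\|S_N\|_{L^p(\R^d)}^p\big] \simeq_p \Big\|\sqrt{\textstyle\sum_{n \leq N} |u_n|^2}\Big\|_{L^p(\R^d)}^p,
\qquad S_N := \sum_{n \leq N} g_n u_n.
\end{equation*}
The same argument applied to differences $S_N - S_M$ yields the analogous Cauchy-type estimate.

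Next I would pass from this integral equivalence to the statements about a.s. convergence. For the direction (i) $\Rightarrow$ (ii): the monotone convergence theorem together with (i) shows that $(S_N)$ is a Cauchy sequence in $L^p(\Omega; L^p(\R^d))$, hence it converges in this Bochner space, which in particular yields convergence in probability of $(S_N)$ in the Banach space $L^p(\R^d)$. Since the partial sums $S_N$ form a symmetric sum of independent $L^p(\R^d)$-valued Gaussian random variables, the It\^o--Nisio theorem upgrades convergence in probability to almost sure convergence in $L^p(\R^d)$. Conversely, for (ii) $\Rightarrow$ (i): if the series converges a.s.\ in the Banach space $L^p(\R^d)$, then by Fernique's integrability theorem for Banach-valued Gaussian series (applicable since the $L^p(\R^d)$-norm of the partial sums defines a Gaussian process up to symmetrization), the limit $S = \sum_n g_n u_n$ lies in every $L^q(\Omega; L^p(\R^d))$. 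Taking $q = p$ and applying the lower bound in the integral equivalence (now with the full sum) gives $\|\sqrt{\sum_n |u_n|^2}\|_{L^p(\R^d)} < +\infty$, which is (i).

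The main subtlety, and the only really substantive point, is the use of the It\^o--Nisio theorem and Fernique integrability in the Banach-space setting $L^p(\R^d)$; everything else is a direct application of Khintchine--Kahane and Fubini. For a fully self-contained presentation one may instead invoke the equivalence of a.s.\ convergence and convergence in $L^p(\Omega;L^p(\R^d))$ for series of independent symmetric $L^p$-valued random variables, as stated for instance in \cite{linden2} in the more general framework of Banach lattices of finite cotype, to which $L^p(\R^d)$ with $p \in [1,+\infty)$ belongs.
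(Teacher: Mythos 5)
Your proposal is correct and follows essentially the same route as the paper: a pointwise Khintchine--Kahane inequality combined with Fubini--Tonelli identifies $\E\big[\|S_N\|_{L^p(\R^d)}^p\big]$ with the $L^p$-norm of the square function, and the a.s.\ convergence statement is then reduced to convergence in $L^p(\Omega;L^p(\R^d))$. The only difference is cosmetic: where the paper cites the Kahane--Khintchine equivalence of moments (together with the standard equivalence between a.s.\ and Bochner-norm convergence for such series) as a black box, you unpack that black box via It\^o--Nisio and Fernique's integrability theorem, which is precisely the content of the cited result.
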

\begin{proof}
From the Kahane-Khintchine inequalities (see \cite[page 44]{pisier1981}), it is known that ii) is equivalent to the convergence of the Gaussian series $\sum g_n(\omega) u_n$ in $L^q(\Omega,L^p(\R^d))$ (whatever is the choice of $q\in [1,+\infty)$). The natural choice is obviously $q=p$. We then write for any positive integers $n_2\geq n_1$.
\begin{equation*}
\Big\|  \sum_{n=n_1}^{n_2} g_n(\omega) u_n     \Big\|_{L^{p}(\Omega,L^p(\R^d)) }^p
=\int_{\Omega} \int_{\R^d} \Big|  \sum_{n=n_1}^{n_2} g_n(\omega) u_n(x)     \Big|^p \mathrm{d}x \mathrm{d}\mathbf{P}(\omega).
\end{equation*}
The Fubini-Toenlli theorem and the Khintchine inequalities ensure that the previous term is equivalent to 
\begin{equation*}
\int_{\R^d} \Big(\int_{\Omega} \Big|  \sum_{n=n_1}^{n_2} g_n(\omega) u_n(x)     \Big|^2  \mathrm{d}\mathbf{P}(\omega)\Big)^{\frac{p}{2}}\mathrm{d}x
\end{equation*}
which equals:
\begin{equation*}
\int_{\R^d} \Big(\sum_{n=n_1}^{n_2} |u_n(x)|^2 \Big)^{\frac{p}{2}}\mathrm{d}x
\end{equation*}
We then easily obtain the equivalence between i) and ii).
\end{proof}

Let us outline the proof of Theorem \ref{thm.main-harmonic}. Due to Proposition \ref{proA1} and the definition \eqref{deffg} of $f_n^{G,\omega}$, the assertion i) of Theorem \ref{thm.main-harmonic} is equivalent to the following finiteness:
\begin{equation*}
\int_{\R^d} \Big(  \sum_{n\geq 1} \frac{\|f_n\|_{L^2(\R^d)}^2}{\dim(E_n)} \sum_{k=1}^{\dim(E_n)} \varphi_{n,k}(x)^2          \Big)^{\frac{p}{2}} \mathrm{d}x <+\infty,
\end{equation*}
namely
\begin{equation*}
\int_{\R^d} \Big(  \sum_{n\geq 1} \frac{\|f_n\|_{L^2(\R^d)}^2}{\dim(E_n)} e_{d,n}(x,x)       \Big)^{\frac{p}{2}} \mathrm{d}x <+\infty.
\end{equation*}
In the present paper, we were mainly concerned by the spectral function $e_{d,n}$ on $\S^d\times \S^d$ (see Proposition \ref{en}). We however need information about $e_{d,n}(x,x)$ for $x$ belonging to $\R^d$ as in \cite[Proposition 4.1]{imek-crusep}. In order to reach the condition the assertion ii) of Theorem \ref{thm.main-harmonic}, we just have to follow the computations of \cite[pages 340-341]{imek-crusep}.

\section{Proof of Lemma \ref{lemco}}\label{anxB}

We start by showing that \eqref{lemco2} is a consequence of \eqref{lemco1}. To this end, note that the left-hand side of \eqref{lemco2} is a continuous function with respect to $\alpha$, which tends to $0$ as $\alpha\rightarrow +\infty$ thanks to \eqref{lemco1}.

\begin{center}
\includegraphics[scale=0.4]{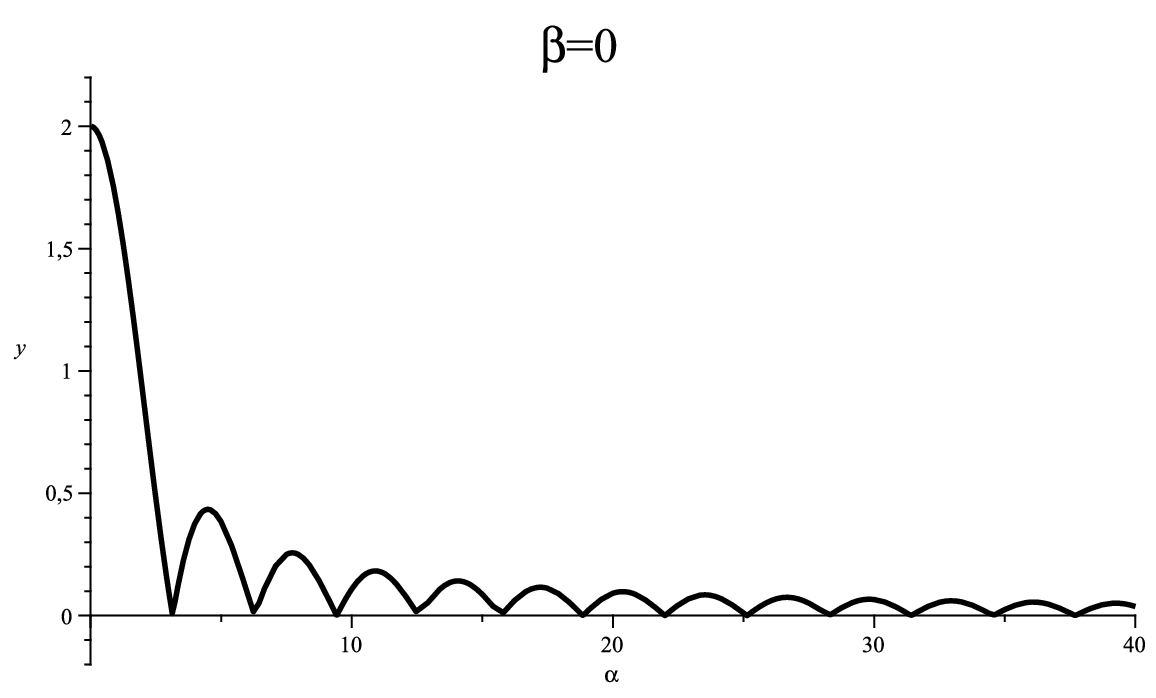} \includegraphics[scale=0.4]{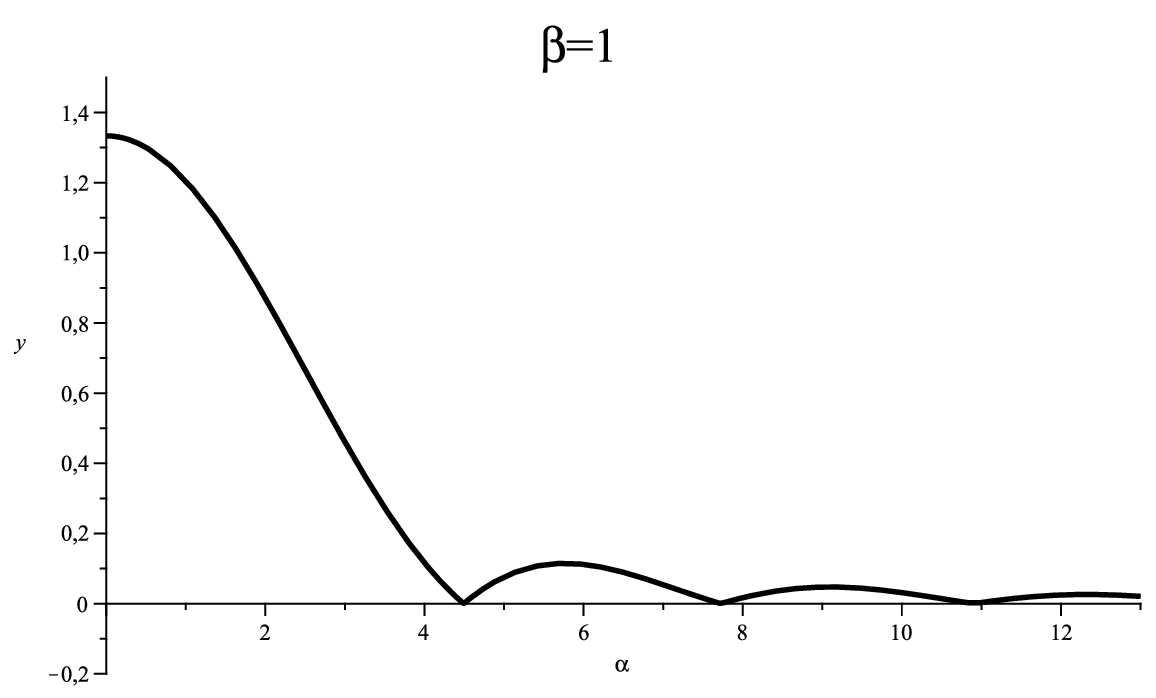}
\end{center}

The triangle inequality shows that the left-hand side of \eqref{lemco2} takes values strictly less than the value for $\alpha=0$:
\begin{equation*}
\int_{0}^{1} \frac{(1-t)^\beta}{\sqrt{t}}\mathrm{d}t.
\end{equation*}
As a consequence of a compactness argument, we obtain \eqref{lemco2} for a suitable constant $\ep(\alpha_0,\beta)\in [0,1)$.

Our main task is now to prove \eqref{lemco1}. First, we change variables $t \leftarrow t^2$ so that 
\begin{eqnarray}\nonumber
  \int_{0}^{1} \frac{\cos(\alpha \sqrt{t} )}{\sqrt{t}} (1-t)^\beta \,\mathrm{d}t & =& 2\int_{0}^{1} \cos(\alpha t)(1-t^2)^\beta  \mathrm{d}t\\ \label{mochebes}
  & =& \int_{-1}^{1} \cos(\alpha t) (1-t^2)^\beta \mathrm{d}t.
\end{eqnarray}

We opt for an elementary argument\footnote{We shall not follow the following way but we have already remarked that \eqref{mochebes} can be expressed with the Bessel functions (see \cite[(1.71.6)]{szeg}):
\begin{equation*}
\sqrt{\pi} \Gamma(\beta+1)\left( \frac{2}{\alpha} \right)^{\beta+\frac{1}{2}}J_{\beta+\frac{1}{2}}(\alpha)
\end{equation*}
and then their asymptotics could be used, as a black box, to derive \eqref{lemco1}.} and start by writing  
\begin{equation}\label{upbo}
\beta=\lceil \beta \rceil -\varepsilon\qquad \mbox{ with }\varepsilon\in [0,1), 
\end{equation}
and $\lceil \beta \rceil$ being the least upper integer of $\beta$.
For any $t\in (-1,1)$, $\rho =\pm 1$ and any couple $(k,\ell)\in \mathbb{N}^2$ satisfying $0\leq k\leq \ell$, we note the elementary differential inequality
\begin{equation*}
| \{(1+\rho t)^{\beta} \}^{(k)}| \leq C_{\beta,\ell} |1+\rho t|^{\beta-\ell}
\end{equation*}
which may be combined to the generalized Leibniz rule in order to get 
\begin{eqnarray}\nonumber
\left\vert\frac{\mathrm{d}^\ell}{\mathrm{d}t^\ell} \{(1-t^2)^\beta\} \right\vert&=& \left\vert\sum_{k=0}^{\ell}\frac{\ell!}{k!(\ell-k)!}\{ (1-t)^{\beta} \}^{(k)}\{ (1+t)^{\beta}\}^{(\ell-k)} \right\vert\\\label{betal}
& \leq & C_{\beta,\ell}(1-t^2)^{\beta-\ell}.
\end{eqnarray}
The general idea is now to perform $\lceil \beta \rceil$ integrations by parts. In the sequel, we shall note $\wp=\pm \cos$ or $\wp=\pm \sin$ so that these integrations by parts read 
  \begin{eqnarray}\label{lafa}
  \int_{-1}^{1} \cos(\alpha t) (1-t^2 )^{\beta} \mathrm{d}t & =&  \left( -\frac{1}{\alpha}\right)^{\lceil \beta \rceil} \int_{-1}^1 \wp'(\alpha t) \frac{\mathrm{d}^{\lceil \beta \rceil }}{\mathrm{d}t^{\lceil \beta \rceil}}  \{ (1-t^2)^\beta\}   \mathrm{d}t.
  \end{eqnarray}
Note that in these integrations by parts, all the boundary terms vanish because \eqref{betal} implies $\frac{d^{\ell}}{dt^{\ell}}  \{ (1-t^2)^\beta\} (\pm 1)=0$ for all integers $\ell <\lceil \beta \rceil $.

Let us us split the integral in right-hand side of \eqref{lafa} as the sum of the two following integrals: 
\begin{eqnarray*}
  \mathcal{I}_\alpha & := & \displaystyle\int_{-1+\frac{1}{\alpha}}^{1-\frac{1}{\alpha}}\wp'(\alpha t) \frac{\mathrm{d}^{\lceil \beta \rceil }}{\mathrm{d}t^{\lceil \beta \rceil}}  \{ (1-t^2)^\beta\}\mathrm{d}t,      \\
  \mathcal{J}_\alpha & := &  \displaystyle\int_{\left[-1,-1+\frac{1}{\alpha}\right]\cup \left[1-\frac{1}{\alpha},1\right] }\wp'(\alpha t) \frac{\mathrm{d}^{\lceil \beta \rceil }}{\mathrm{d}t^{\lceil \beta \rceil}}  \{ (1-t^2)^\beta\}   \mathrm{d}t.
\end{eqnarray*}
Due to the factor $\frac{1}{\alpha^{\lceil \beta \rceil}}$ in \eqref{lafa} and to the formula \eqref{upbo}, proving the bound \eqref{lemco1} boils down to showing  
\begin{equation*}
|\mathcal{I}_\alpha|+|\mathcal{J}_\alpha|\lesssim \frac{1}{\alpha^{1-\ep}}.
\end{equation*}
  
The term $\mathcal{J}_\alpha$ can be bounded by taking advantage of the parity (or imparity) of the function, using the triangle inequality, $|\wp'|\leq 1$ and \eqref{betal}:
\begin{equation*}
  |\mathcal{J}_\alpha| \leq  2\int_{1-\frac{1}{\alpha}}^{1} \left\vert \frac{\mathrm{d}^{\lceil \beta \rceil }}{\mathrm{d}t^{\lceil \beta \rceil}}  \{ (1-t^2)^\beta\}  \right\vert \mathrm{d}t  \lesssim  \int_{1-\frac{1}{\alpha}}^{1} (1-t)^{\beta-\lceil \beta \rceil } \mathrm{d}t.
\end{equation*}
Finally, \eqref{upbo} gives us $|\mathcal{J}_\alpha| \lesssim \frac{1}{\alpha^{1-\ep}}$ for $\alpha \rightarrow +\infty$.
  
The term $\mathcal{I}_\alpha$ is handled using one more integration by parts: 
\begin{equation*}
  \mathcal{I}_\alpha =  \frac{1}{\alpha}\left[\wp(\alpha t) \frac{\mathrm{d}^{\lceil \beta \rceil }}{\mathrm{d}t^{\lceil \beta \rceil}}  \{ (1-t^2)^\beta\}\right]_{-1+\frac{1}{\alpha}}^{1-\frac{1}{\alpha}}-\frac{1}{\alpha} \int_{-1+\frac{1}{\alpha}}^{1-\frac{1}{\alpha}}
   \wp(\alpha t)  \frac{\mathrm{d}^{1+\lceil \beta \rceil }}{\mathrm{d}t^{1+\lceil \beta \rceil}}  \{ (1-t^2)^\beta\}\,\mathrm{d}t. 
\end{equation*}
Using the triangle inequality and incorporating the bound \eqref{betal} with $\ell=\lceil \beta \rceil$, we can write
\begin{eqnarray*}
  |\mathcal{I}_\alpha| & \leq & \frac{C}{\alpha^{1-\ep}}+\frac{C}{\alpha} \int_{-1+\frac{1}{\alpha}}^{1-\frac{1}{\alpha}}
   \left\vert \frac{\mathrm{d}^{1+\lceil \beta \rceil }}{\mathrm{d}t^{1+\lceil \beta \rceil}}  \{ (1-t^2)^\beta\}  \right\vert \,\mathrm{d}t .
   \end{eqnarray*}
If $\ep=0$, namely $\beta=\lceil \beta \rceil$, the last integral involves a polynomial that can be uniformly bounded on $[-1,1]$ and we reach the expected bound $\frac{C}{\alpha}$.

For the last case $0<\ep<1$, we again use \eqref{betal} with $\ell=1+\lceil \beta \rceil$ to conclude   
\begin{eqnarray*}
 |\mathcal{I}_\alpha |  & \leq & \frac{C}{\alpha^{1-\ep}}+ \frac{C}{\alpha}\int_{0}^{1-\frac{1}{\alpha}} (1-t)^{-\ep-1} \,\mathrm{d}t \\
    & \leq & \frac{C}{\alpha^{1-\ep}}.
\end{eqnarray*}
The inequality \eqref{lemco1} is thus proved.

\section{Proof of Lemma \ref{lemcos}}\label{pr-lemcos}

We recall that from the Euler-Maclaurin formula on comparison of series and integrals we can write for any $f\in \mathcal{C}^1(\R,\R)$ and any integers $q\geq p$: 
\begin{equation*}
\left\vert \sum_{\ell=p}^{q} f(\ell)- \int_{p}^{q} f(t)\,\mathrm{d}t\right\vert \leq \frac{|f(p)|+|f(q)|}{2}+\frac{1}{2} \int_{p}^q |f'(t)|\,\mathrm{d}t.
\end{equation*}

\textbf{Step 1.} We first explain the proof of the following asymptotics: 

\begin{equation}\label{anxC0}
\Big\vert\sum_{\substack{ (k,\ell)\in\N^\star \times \N^\star  \\ k+2\ell =n }} \frac{F(a\sqrt{k})}{\sqrt{k}} \ell^\beta -\int_{1}^{\lfloor \frac{n-1}{2} \rfloor} \frac{F(a\sqrt{n-2t})}{\sqrt{n-2t}} t^\beta \mathrm{d}t \Big\vert \lesssim n^\beta \log(n).
\end{equation}

For any $t\in \big(0,\frac{n}{2}\big)$, we define $f(t)=\frac{F(a\sqrt{n-2t})}{\sqrt{n-2t}} t^\beta $ and differentiate 
\begin{equation*}
f'(t)= -\frac{aF'(a\sqrt{n-2t})}{n-2t}t^\beta + \frac{F(a\sqrt{n-2t})}{\sqrt{n-2t}} \beta t^{\beta-1}+\frac{F(a\sqrt{n-2t})}{(n-2t)^{\frac{3}{2}}}t^\beta.
\end{equation*}
Since $F$ is assumed to be bounded and Lipschitz and $a$ belongs to a compact subset of $[0,+\infty)$, we obtain
\begin{equation*}
|f'(t)|\lesssim  \frac{t^\beta}{n-2t} + \frac{ t^{\beta-1}}{\sqrt{n-2t}} +\frac{t^\beta}{(n-2t)^{\frac{3}{2}}}.
\end{equation*}
The left-hand side of \eqref{sumcos} reads $\displaystyle\sum\limits_{\ell=1}^{\lfloor \frac{n-1}{2}\rfloor} f(\ell)$, so that the above considerations lead to 
\begin{multline*}
\Big\vert\sum_{\substack{ (k,\ell)\in\N^\star \times \N^\star  \\ k+2\ell =n }} \frac{F(a\sqrt{k})}{\sqrt{k}} \ell^\beta -\int_{1}^{\lfloor \frac{n-1}{2} \rfloor} \frac{F(a\sqrt{n-2t})}{\sqrt{n-2t}} t^\beta \mathrm{d}t \Big\vert \\
\lesssim \frac{1}{\sqrt{n}}+n^\beta+ \int_{1}^{\lfloor \frac{n-1}{2} \rfloor} \left(\frac{t^\beta}{n-2t}+\frac{t^{\beta-1}}{\sqrt{n-2t}}+\frac{t^\beta}{(n-2t)^{\frac{3}{2}}}\right) \mathrm{dt}.
\end{multline*}
Remembering the condition $\beta \geq - \frac{1}{2}$ and cutting the integral following $t\leq \frac{n}{4}$ and $t\geq \frac{n}{4}$, we may control the upper bound as follows to get the expected result:
\begin{equation*}
  \mathcal{O}\left( \frac{1}{\sqrt{n}} +n^\beta+ n^\beta \log(n)+\Big(\frac{1}{\sqrt{n}}\int_{1}^{\frac{n}{4}} t^{\beta-1} \mathrm{d}t +n^{\beta-\frac{1}{2}}\Big)+  n^\beta \right) = \mathcal{O}(n^{\beta}\log n) .
\end{equation*}


\textbf{Step 2.} Looking at \eqref{sumcos} and \eqref{anxC0}, it remains to prove the following asymptotic
\begin{equation}\label{anxC}
\int_{1}^{\lfloor \frac{n-1}{2} \rfloor} \frac{F(a\sqrt{n-2t})}{\sqrt{n-2t}} t^\beta \mathrm{d}t
- \frac{n^{\beta+\frac{1}{2}}}{2^{\beta+1}} \int_{0}^1 \frac{F(a \sqrt{n t})}{\sqrt{t}} (1-t)^\beta   \mathrm{d}t =\mathcal{O}(n^\beta \log(n)).
\end{equation}
After making the change of variable $   t\leftarrow 1-\frac{2t}{n} $, the left-hand side becomes
\begin{equation*}
\int_{1}^{\lfloor \frac{n-1}{2} \rfloor} \frac{F(a\sqrt{n-2t})}{\sqrt{n-2t}} t^\beta \mathrm{d}t
-\int_{0}^{\frac{n}{2}} \frac{F(a\sqrt{n-2t})}{\sqrt{n-2t}} t^\beta \mathrm{d}t
\end{equation*}
which is clearly equal to
\begin{equation}\label{anxC2}
 -\int_{0}^{1} 
\frac{F(a\sqrt{n-2t})}{\sqrt{n-2t}} t^\beta \mathrm{d}t-\int_{\lfloor \frac{n-1}{2} \rfloor}^{\frac{n}{2}}\frac{F(a\sqrt{n-2t})}{\sqrt{n-2t}} t^\beta \mathrm{d}t.
\end{equation}

The first integral in \eqref{anxC2} is immediately bounded as follows because $F$ is bounded and $\beta$ satisfies $\beta>-1$:
\begin{eqnarray*}
  \left\vert\int_{0}^{1} 
  \frac{F(a\sqrt{n-2t})}{\sqrt{n-2t}} t^\beta \mathrm{d}t  \right\vert&\lesssim & 
 \frac{1}{\sqrt{n-2}} \int_{0}^{1} 
 t^\beta \mathrm{d}t =\mathcal{O}\Big( \frac{1}{\sqrt{n}}\Big).
\end{eqnarray*}
For the second integral in \eqref{anxC2}, we bound and compute 
\begin{eqnarray*}
 \left\vert \int_{\lfloor \frac{n-1}{2} \rfloor}^{\frac{n}{2}}\frac{F(a\sqrt{n-2t})}{\sqrt{n-2t}} t^\beta \mathrm{d}t \right\vert & \lesssim & \int_{\frac{n-4}{2}}^{\frac{n}{2}}
 \frac{t^\beta}{\sqrt{n-2t}}\mathrm{d}t \\
 & \lesssim & n^\beta \int_{\frac{n-4}{2}}^{\frac{n}{2}} \frac{\mathrm{d}t}{\sqrt{n-2t}}=
2n^\beta.
\end{eqnarray*}
The last two estimates imply \eqref{anxC} thanks to the condition $\beta\geq -\frac{1}{2}$.

\bibliographystyle{alpha}
\bibliography{imekraz-bib.bib}

\end{document}